\renewcommand{\thefootnote}{\fnsymbol{footnote}}
\newtheorem{theorem}{Theorem}[section]
\newtheorem{lemma}[theorem]{Lemma}
\newtheorem{proposition}[theorem]{Proposition}
\newtheorem{remark}[theorem]{Remark}
\newtheorem*{example*}{Example}
\newtheorem*{theorem*}{Theorem}
\newtheorem*{remark*}{Remark}
\newtheorem{corollary}[theorem]{Corollary}
\newtheorem*{corollary*}{Corollary}
\newtheorem{definition}[theorem]{Definition}
\newtheorem*{definition*}{Definition}
\newtheorem*{notation*}{Notation}
\newtheorem{notation}[theorem]{Notation}
\numberwithin{equation}{section}
\def\ba{\mathbf{a}}
\newcommand{\CC}{{\mathbb C}}
\newcommand{\RR}{{\mathbb R}}
\newcommand{\PP}{{\mathbb P}}
\newcommand{\NN}{{\mathbb N}}
\newcommand{\calA}{\mathcal{A}}
\newcommand{\calR}{\mathcal{R}}
\newcommand{\calS}{\mathcal{S}}
\renewcommand{\Im}{\mbox{Im}}
\renewcommand{\Re}{\mbox{Re}}
\newcommand{\co}{\mbox{co}}
\newcommand{\bc}{\mathbf{c}}
\newcommand{\bz}{\mathbf{z}}
\def \bar{\overline}
\def \hat{\widehat}
\def \b0{{\bf 0}}
\def\be{{\bf e}}
\def \span{\mathrm{span}}
\def\calL{\mathcal{L}}
\def\calE{\mathcal{E}}
\def\intt{\mathrm{int}}
\long\def\symbolfootnote[#1]#2{\begingroup%
\def\thefootnote{\fnsymbol{footnote}}\footnote[#1]{#2}\endgroup}
\begin{document}

\title{Extremal functions for real convex bodies: simplices, strips, and ellipses}
\author{Sione Ma`u}

\maketitle

\begin{abstract}
We present an explicit method to compute the (Siciak-Zaharjuta) extremal function of a real convex polytope in terms of supporting simplices and strips.  We use this to give a new proof of the existence of extremal ellipses associated to the extremal function of a real convex body.
\end{abstract}


\section{Introduction}

Let $K\subset\CC^d$ be compact.  The \emph{(Siciak-Zaharjuta) extremal function} of $K$ is defined by 
\begin{equation} \label{eqn:szextremal}
V_K(z):=\sup\left\{ \frac{1}{\deg p}\log^+|p(z)|\colon p\in\CC[z],\ \|p\|_K\leq 1 \right\};
\end{equation}
here $\|p\|_K=\sup_{z\in K}|p(z)|$ is the sup norm,  $\CC[z]$ denotes the (complex) multivariate polynomials in $z=(z_1,\ldots,z_d)$, and $\log^+|\cdot|=\max\{\log|\cdot|,0\}$.     The upper semicontinuous regularization $V_K^*(z)=\limsup_{\zeta\to z} V_K(\zeta)$ is either identically $+\infty$ or is a plurisubharmonic (psh) function of logarithmic growth that is maximal outside the set $K$:
\[ \begin{aligned}
V_K^*(z) &\leq  \log^+|z| +C \ \hbox{ for some constant } C, \\
(dd^cV^*_K)^d &= 0 \ \hbox{ on } \CC^d\setminus K.
\end{aligned} \]
Here $(dd^c\cdot)^d$ denotes the \emph{complex Monge-Amp\`ere operator}, which, applied to a function $u$ of class $C^2$, is given by the formula
$$(dd^cu)^d = 4^d d!\det
\begin{bmatrix}     
\frac{\partial^2}{\partial z_j\partial\bar z_k}
\end{bmatrix}   dV$$
where $dV$ denotes $2d$-dimensional Euclidean volume in $\CC^d$. The extension of $(dd^c\cdot)^d$ to locally bounded psh functions gives a positive measure \cite{bedfordtaylor:dirichlet}.  Both $V_K^*$ and $(dd^cV^*_K)^d$, the \emph{complex equilibrium  measure of $K$},  are of fundamental importance in pluripotential theory and polynomial approximation.   For most sets of interest,  $K$ is \emph{regular}, i.e., $V_K^*=V_K$; this is true for the sets considered in this paper (cf. Section \ref{sec:pluripotential}), so we will usually disregard  the `$*$' superscript.

Although explicit computation of $V_K$ and $(dd^cV_K)^d$ is virtually impossible in general, progress has been made in understanding certain cases.   Much is known when $K\subset\RR^d\subset\CC^d$ is a real convex body (i.e., a convex set with nonempty interior in $\RR^d$).   Lundin \cite{lundin:extremal} studied the structure of $V_K$ when $K$ is a convex body that is \emph{symmetric (with respect to the origin)}, i.e., that satisfies $K=-K$.   He computed explicit formulas for $V_K$ and $(dd^cV_K)^d$ when $K\subset\RR^d$ is the unit ball.  Baran \cite{baran:plurisubharmonic} extended this study to certain nonsymmetric convex bodies obtained as images of symmetric ones by a quadratic mapping; in particular, he derived an explicit formula for the extremal function of the standard simplex (cf. equation (\ref{eqn:vs})).  

Lundin's method for computing $V_K$ when $K$ is symmetric involved the construction of complex ellipses on which $V_K$ is harmonic. 
 Burns, Levenberg and Ma`u \cite{burnslevmau:pluripotential} verified the existence of such ellipses for all (symmetric and non-symmetric) real convex bodies.  They used deep results of Lempert relating the extremal function of the closure of a bounded, strictly linearly convex domain to Kobayashi geodesics in an associated (dual) strictly linearly convex domain   (\cite{lempert:metrique}, \cite{lempert:intrinsic}, \cite{lempert:symmetries}); the ellipses were obtained as limits of Kobayashi geodesics using a convergence argument.  

In this paper, we show that pluripotential theory on real convex bodies may be developed in a self-contained way  without reference to Lempert theory.   
The main theorem of the paper is the following. 

\begin{theorem*}[Theorems \ref{thm:main} \&\ \ref{thm:mainN} (special cases) and Theorem \ref{thm:maingen}]
Let $K\subset\RR^d$ be a compact convex polytope. Then we have the formula
$$
V_K(z) = \max\{ V_{S}(z)\colon S \in\calS(K) \}.
$$
\end{theorem*}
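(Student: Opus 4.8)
The plan is to prove the two inequalities $V_K\ge\max\{V_S:S\in\calS(K)\}$ and $V_K\le\max\{V_S:S\in\calS(K)\}$ separately; the first is soft and the real content is in the second. For the inequality ``$\ge$'': every $S\in\calS(K)$ is a supporting simplex or strip, hence a convex set with $K\subseteq S$, so $\|\cdot\|_K\le\|\cdot\|_S$ and therefore $V_K\ge V_S$ on all of $\CC^d$ by the usual monotonicity of the extremal function under inclusion. Since the facets of any $S\in\calS(K)$ lie among the finitely many facet-hyperplanes of $K$, the family $\calS(K)$ is finite, so the maximum is attained and $V_K\ge\max\{V_S:S\in\calS(K)\}$.

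For the inequality ``$\le$'', fix $z\in\CC^d$. If $z\in K$ then $V_K(z)=0$ and also each $V_S(z)=0$ since $K\subseteq S$, so assume $z\notin K$. By the foliation of \cite{burnslevmau:exterior} there is an extremal ellipse $V_c$ with $z\in V_c$; write $E:=V_c\cap\RR^d$, the real ellipse (possibly a segment) maximally inscribed in $K$ furnished by property (iii). Since $V_c$ is the complexification of $E$ (a conic, or a line in the degenerate case), we get $V_c\cap K=V_c\cap\RR^d\cap K=E$ using $E\subseteq K$, so $z\in V_c\setminus E$. By property (ii) the restriction $V_K|_{V_c\setminus E}$ is harmonic; together with $V_K|_E\equiv 0$ and the logarithmic growth of $V_K$, uniqueness of the Green's function on the Riemann surface underlying $V_c$ (as uniformized in \cite{lundin:extremal,baran:plurisubharmonic}) identifies $V_K|_{V_c}$ with the Green's function $g_{V_c\setminus E}$ of $V_c\setminus E$ with logarithmic pole at infinity. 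Crucially, this value at $z$ depends only on the pair $(V_c,E)$, not on $K$ itself.

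The heart of the matter is a geometric claim that I would isolate as a lemma: \emph{a maximally inscribed ellipse $E$ of the polytope $K$ is also a maximally inscribed ellipse of some $S\in\calS(K)$.} The point is that $E$ is ``pinned'' in $K$ through its tangencies with $\partial K$: the supporting hyperplanes at the contact points are among the facet-hyperplanes $H_1,\dots,H_m$ of $K$, and by the Lagrange-multiplier type characterization of maximally inscribed ellipses developed in \cite{burnslevmau:exterior,burnslevmaurevesz:monge}, only a minimal subcollection of the $H_i$ is needed to pin $E$. That subcollection cuts out a supporting simplex of $K$ when it consists of $d+1$ hyperplanes in general position, and a strip (or a product of a strip with a lower-dimensional simplex) when it contains a parallel pair or has fewer than $d+1$ members; in each case the region is a member $S$ of $\calS(K)$ containing $K$, and the pinning conditions, being intrinsic to $E$ and the chosen hyperplanes, continue to exhibit $E$ as maximally inscribed in $S$. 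Granting the lemma, the correspondence between maximally inscribed ellipses and extremal ellipses (for simplices this is part of Baran's analysis, Theorem \ref{thm:61}; for strips it follows from the one-variable formula for an interval) shows that $V_c$ is an extremal ellipse \emph{of $S$}; repeating the previous paragraph with $S$ in place of $K$, and using $V_c\cap S=E$, yields $V_S|_{V_c}=g_{V_c\setminus E}=V_K|_{V_c}$, hence $V_S(z)=V_K(z)$ and so $\max\{V_S(z):S\in\calS(K)\}\ge V_K(z)$, completing the proof.

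The main obstacle is the geometric lemma, and within it the degenerate configurations: determining exactly which sub-families of facet-hyperplanes can pin a maximally inscribed ellipse, checking that the region they cut out still contains $K$, and treating segments and lower-dimensional ellipses --- where strips rather than simplices arise --- on the same footing as full-dimensional ellipses. A secondary, more routine point is verifying that the restriction of $V_S$ to a complexified extremal ellipse is the expected Green's function in the strip and lower-dimensional-simplex cases, which should follow by inspection from Baran's explicit formula (Theorem \ref{thm:61}) and the classical formula for the extremal function of a strip.
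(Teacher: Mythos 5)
Your proposal follows essentially the same route as the paper: the ``$\geq$'' direction is monotonicity, and the ``$\leq$'' direction rests on the same three ingredients --- that $V_K$ restricted to the complexified extremal ellipse through $z$ equals $\log|\zeta|$ (Theorem \ref{thm:blm}), that maximally inscribed ellipses coincide with extremal ellipses for compact convex bodies and for strips (Theorem \ref{thm:blm2} and Corollary \ref{cor:65}), and the geometric lemma you isolate, which is precisely Proposition \ref{prop:ellipsesimplex}. The only minor inaccuracy is your remark that the strip case follows from ``the one-variable formula for an interval'': a strip's cross-section can be a $j$-dimensional simplex for any $j<d$, and the paper instead extends Theorem \ref{thm:blm2} to strips via a limiting argument using Lundin's formula for the ball (Lemma \ref{lem:65} and Corollary \ref{cor:65}).
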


Here $\calS(K)$ denotes a finite collection of sets whose common intersection is $K$.  Each $S\in\calS(K)$ is a simplex or a \emph{strip} (see Section \ref{sec:strips}).  The explicit construction of these simplices and strips takes up Sections \ref{sec:hyperplanes}, \ref{sec:simplices}, and \ref{sec:strips}.  

The methods we use to prove our main theorem also provide a new proof of the main result in \cite{burnslevmau:pluripotential} as a byproduct.  These methods are developed in the remaining  sections.  

 In Section \ref{sec:pluripotential} we review some basic results in pluripotential theory for later use. 

 In Section \ref{sec:explicit} we recall the explicit formulas for the extremal functions of the unit ball and the standard  simplex in $\RR^d\subset\CC^d$.  We also recall the Robin function $\rho_K$ and Robin indicatrix $K_{\rho}$ of a compact set $K$, and compute them explicitly for the real ball and simplex.   


In Section \ref{sec:hooke} we introduce \emph{Hooke ellipses} for a symmetric convex body $K$.  A Hooke ellipse is one that is centered at the origin and inscribed in the convex body.  By an explicit calculation, we verify that for the real unit ball $B_{\RR}\subset\RR^d$, the extremal function $V_{B_{\RR}}$ is harmonic on the complexification $E_{\CC}$ of any Hooke ellipse $E$ for $B_{\RR}$.  Precisely, there is a parametrization $f\colon\CC\setminus\{0\}\to E_{\CC}$ such that $V_{B_{\RR}}(f(\zeta))=\left|\log|\zeta|\right|$.

 By making a linear change of coordinates, this harmonicity result extends to Hooke ellipses for a solid ellipsoid.      We then prove the following `\emph{identity principle}' for Hooke ellipses:

\begin{theorem*}[Theorem \ref{prop:19}]
Let $K_1,K_2$ be compact sets, and let $E_1\subset K_1,E_2\subset K_2$ be Hooke  ellipses whose complexifications $E_{1,\CC},E_{2,\CC}$ are parametrized by maps $f_1,f_2$.  Suppose
$$
V_{K_1}(f_1(\zeta))=\log|\zeta|,\ V_{K_2}(f_2(\zeta))=\log|\zeta| \quad\hbox{for all }\zeta\in\CC\setminus\Delta.
$$
Further, suppose $z_0\in E_{1,\CC}\cap E_{2,\CC}$ and $V_{K_1}(z_0)=V_{K_2}(z_0)>0$.  Then $E_{1,\CC}=E_{2,\CC}$.  
\end{theorem*}


Taking $K_1=K_2=B_{\RR}$, complexifications of Hooke ellipses for $B_{\RR}$ give a foliation of $\CC^d\setminus B_{\RR}$ such that $V_{B_{\RR}}$ is harmonic on each leaf of the foliation. The foliation parameter is given by the quotient of the boundary of the Robin indicatrix $\partial B_{\RR,\rho}$ with respect to a circle action.  All information about the foliation can be packaged into a \emph{generalized Joukowski map} that maps $\CC^d\setminus B_{\RR,\rho}$ to $\CC^d\setminus B_{\RR}$.  Up to a normalization factor, this is a direct generalization of the classical Joukowski function that maps the exterior of the unit disk to the exterior of  $[-1,1]$ in the complex plane.  A similar foliation result holds for a solid ellipsoid by making a  linear change of coordinates. 



 In Section \ref{sec:newton} we use the square map to transform the foliation of $\CC^d\setminus B_{\RR}$ by complexifications of Hooke ellipses to a foliation of $\CC^d\setminus \Sigma_d$ by complexifications of so-called \emph{Newton ellipses} (here $\Sigma_{d}$ is the standard simplex in $\RR^d$), such that the restriction of $V_{\Sigma_d}$ to the complexification of a Newton ellipse is harmonic outside of $\Sigma_d$.   Composing with an appropriate linear map, the foliation of $\CC^d\setminus\Sigma_d$ transforms to a foliation of $\CC^d\setminus S$, for any $d$-dimensional simplex $S$. 

In Section \ref{sec:inscribed} we consider ellipses inscribed in a real compact convex polytope $K\subset\RR^d$ and relate them to Newton ellipses for supporting simplices.  Using the square map and Proposition \ref{thm:ellipsoid} (which follows from Theorem \ref{prop:19}), we prove Theorem \ref{thm:main} which is a special case of our main theorem.

  In Section \ref{sec:robinexpmap} we consider convex polytopes $K$ whose ($d-1$)-dimensional faces satisfy a linear independence condition (cf. Theorem \ref{thm:simplices}).  If our main theorem holds, we show that there exists  a foliation of $\CC^d\setminus K$ by ellipses on which $V_K$ is harmonic.  The foliation is constructed by selecting appropriate ellipses associated to the various simplices in $\calS(K)$.  Information about the foliation may be packaged into a so-called \emph{Robin exponential map}.\footnote{This terminology was  introduced in \cite{burnslevmau:exterior}.}

In Section \ref{sec:computing}, we extend our main theorem to cover all convex polytopes whose ($d-1$)-dimensional faces satisfy the above-mentioned linear independence condition (Theorem \ref{thm:mainN}).  The proof uses an inductive argument on the number of ($d-1$)-dimensional faces together with the results of Section \ref{sec:robinexpmap}.   We then present an algorithm to compute the extremal function of a compact convex polytope explicitly, using our main theorem together with the barycentric coordinate formula for the extremal function of a real simplex \cite{bosmauwaldron:extremal}.  We illustrate the algorithm on a couple of convex polygons in $\RR^2$.

In Section \ref{sec:blm} we use Theorem \ref{thm:mainN} and an approximation argument to give a new proof of the main result of \cite{burnslevmau:pluripotential}:

\begin{theorem*}[Theorem \ref{thm:blm}] 
Let $K\subset\RR^d$ be a compact convex body.  For each $z\in\CC^d\setminus K$ there exists a complexified ellipse   $E_{\CC}\subset\CC^d$  with parametrization  
\begin{equation*}
\CC^*\ni\zeta \stackrel{f}{\longmapsto} \ba + \bc\zeta+\bar\bc/\zeta\in\CC^d  \quad (\ba\in\RR^d,\ \bc\in\CC^d)
\end{equation*}
such that  
\begin{itemize}
\item[(i)]
$z=f(\zeta_z)$ for some $|\zeta_z|>1$,  \item[(ii)] $V_K(f(\zeta))=\log|\zeta|$ for all $|\zeta|>1$, and  
\item[(iii)] $E:=\{f(e^{i\theta})\colon \theta\in\RR\}$ is a real ellipse (or line segment if $\bc$ is real) that is inscribed in $K$.  \end{itemize}
\end{theorem*}

The above result for convex bodies extends naturally from the results of Lundin and Baran and the existence of Hooke and Newton ellipses.  Figure 1 outlines the argument.  As a consequence, we show that our main theorem applies to all compact convex polytopes (Theorem \ref{thm:maingen}).  We also use Theorem \ref{thm:blm} to prove another `max' formula  for the extremal function of a finite intersection of symmetric convex bodies (Theorem \ref{thm:symmetric}).

Using Theorem \ref{thm:maingen} together with results on the transformation of the extremal function under polynomial mappings \cite{levperera:global}, one can even compute a number of other extremal functions.  This method was used, together with a specialized version of the theorem in $\RR^2$, to compute the extremal function of a real torus in $\RR^3$ as a subset of its complexification in $\CC^3$ \cite{piazzon:extremal}.  

\begin{figure}
\begin{tikzpicture}
\path (0,0) node[align=center](x) {\small real\\ \small ball}
(3,0) node[align=center](y) {\small Hooke\\ \small ellipses}
(0,-2) node[align=center](s) {\small real\\ \small simplex}   
(3,-2) node[align=center](n){\small Newton\\ \small ellipses}
(0,-4) node[align=center](p) {\small special\\ \small polytope$^{(4)}$} 
(3,-4) node[align=center](r){\small Robin\\ \small ellipses$^{(5)}$}
(0,-6) node[align=center](c){\small convex\\ \small body $K$}
(3,-6) node[align=center](e){\small ellipses\\ \small where $V_K$\\ \small is harmonic};
\draw[->] (x) -- node[pos=0.5,above]{\footnotesize(1)}  (y);
\draw[<->](x) -- node[pos=0.5,right]{\footnotesize(2)}(s);
\draw[<->](y)-- node[pos=0.5,right]{\footnotesize(2)}(n);
\draw[->,dashed] (s)--(n);
\draw[->,dashed] (s) -- node[pos=0.5,right]{\footnotesize(3)}  (p);
\draw[->,dashed] (p) -- (r);
\draw[->,dashed] (n)--(r);
\draw[->] (p)-- node[pos=0.5,right]{\footnotesize(6)} (c);
\draw[->] (r)-- node[pos=0.5,right]{\footnotesize(7)}(e);
\draw[->,dashed] (c)--node[pos=0.5,above]{\footnotesize(8)}   (e);
\draw (8,-3) node[align=left] {\small\textsc{Notes}\\ \footnotesize(1) Proposition \ref{prop:16}.\\ \footnotesize (2) Quadratic map (cf. Definition \ref{def:91}).\\
\footnotesize (3) Theorems \ref{thm:main} and \ref{thm:mainN}.\\
\footnotesize (4) Faces are as in Theorem \ref{thm:simplices}.\\
\footnotesize (5) Given by (\ref{eqn:rexp}) (the Robin exponential map).\\
\footnotesize (6) Approximation of the extremal function\\
\footnotesize \quad\  (Proposition \ref{prop:61}, part 3).\\
\footnotesize (7) Convergence of holomorphic maps.\\
\footnotesize (8) Proof of Theorem \ref{thm:blm}. };
\end{tikzpicture}
\caption{A solid line indicates a direct link, while a dashed line uses previously established links.}
\end{figure}

--------------------------------------------------




\section{Hyperplanes, normal vectors, affine functions}\label{sec:hyperplanes}

In this section and the next, and through most of Section \ref{sec:strips}, everything takes place in $\RR^d$.  We  
 use the following standard notation: 
 $\span\{v_1,\ldots,v_m\}$ denotes the span over $\RR$ of the vectors $v_1,\ldots,v_m$ in $\RR^d$,  and 
 `${\perp}$' denotes the orthogonal complement in $\RR^d$.

\begin{lemma}
Let $n_1,\ldots,n_j$ be nonzero vectors in $\RR^d$ where $j\leq d$.  The following two statements are  equivalent.
\begin{enumerate}
\item For any $v\in\RR^d$, 
$$v\cdot n_1=v\cdot n_2=\cdots=v\cdot n_{j-1}=0\ \Longrightarrow \   v\cdot n_{j}=0.$$
\item $n_j=\sum_{k=1}^{j-1} \lambda_kn_k$, for some constants $\lambda_1,\ldots,\lambda_{j-1}$.
\end{enumerate}
\end{lemma}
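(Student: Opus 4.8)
The plan is to prove the two implications separately, with the easy direction $(2)\Rightarrow(1)$ first and the substantive direction $(1)\Rightarrow(2)$ via a dimension-counting argument on orthogonal complements.

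For $(2)\Rightarrow(1)$: assume $n_j=\sum_{k=1}^{j-1}\lambda_k n_k$. If $v\in\RR^d$ satisfies $v\cdot n_k=0$ for all $k=1,\ldots,j-1$, then by bilinearity of the dot product $v\cdot n_j=\sum_{k=1}^{j-1}\lambda_k(v\cdot n_k)=0$. This is immediate and requires no hypothesis on $j$.

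For $(1)\Rightarrow(2)$: the hypothesis says precisely that the linear subspace $W:=\span\{n_1,\ldots,n_{j-1}\}^{\perp}$ is contained in $\{n_j\}^{\perp}=\span\{n_j\}^{\perp}$. Taking orthogonal complements reverses inclusions and is an involution on subspaces of $\RR^d$, so this gives $\span\{n_j\}\subseteq\span\{n_1,\ldots,n_{j-1}\}$, which is exactly statement $(2)$ (read off the coefficients $\lambda_k$ from the resulting linear combination; since $n_j\ne0$ it is a genuine nonzero combination, but that is not needed for the statement). I would phrase this using the standard facts $\span\{u_1,\ldots,u_m\}^{\perp\perp}=\span\{u_1,\ldots,u_m\}$ and $A\subseteq B\Rightarrow B^{\perp}\subseteq A^{\perp}$ in a finite-dimensional inner product space.

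I do not anticipate a genuine obstacle here — the statement is essentially a restatement of the duality between a finite set of linear functionals and their common kernel. The only point requiring a little care is making sure the orthogonal-complement bookkeeping is stated cleanly (in particular that double orthogonal complement returns the original span, which holds because $\RR^d$ is finite-dimensional), and noting that the hypothesis $j\le d$ plays no role in the argument as given — it is presumably imposed only because the lemma will later be applied in that regime.
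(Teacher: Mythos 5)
Your proof is correct. The $(2)\Rightarrow(1)$ direction matches the paper exactly (both observe it is immediate from bilinearity). For $(1)\Rightarrow(2)$, however, you take a different route from the paper. You read the hypothesis as an inclusion of orthogonal complements, $\span\{n_1,\ldots,n_{j-1}\}^{\perp}\subseteq\{n_j\}^{\perp}$, and then invoke the biduality $W^{\perp\perp}=W$ in a finite-dimensional inner product space together with the order-reversal of $\perp$ to conclude $\span\{n_j\}\subseteq\span\{n_1,\ldots,n_{j-1}\}$. The paper instead argues concretely: it writes $n_j=v+w$ with $v\in\span\{n_1,\ldots,n_{j-1}\}$ and $w$ in the orthogonal complement, applies hypothesis $(1)$ to the vector $w$ itself (which annihilates $n_1,\ldots,n_{j-1}$ by construction) to get $w\cdot n_j=0$, and then computes $0=w\cdot n_j=w\cdot w$, forcing $w=0$. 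The two arguments are of course closely related — the paper's computation is essentially a self-contained proof of the one instance of biduality you need — but the paper's version avoids citing $W^{\perp\perp}=W$ as a black box and is a touch more elementary, while yours is shorter and makes the structural duality explicit. Your closing remark that $j\le d$ is not actually used in the argument is accurate and worth noting.
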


\begin{proof}
 Write $n_j=v+w$ where $v\in\span\{n_1,\ldots,n_{j-1}\}$ and $w\in \span\{n_1,\ldots,n_{j-1}\}^{\perp}$.   Hence 
$n_j=\sum_{k=1}^{j-1} \lambda_kn_k +  w$, where $w\cdot n_k=0$ for all $k=1,\ldots,j-1$.  
  If (1.) holds, then also $w\cdot n_j=0$, and  
$$
0= w\cdot n_j = w\cdot \left(\sum_{k=1}^{j-1} \lambda_kn_k +  w\right) = w\cdot w.
$$
Thus $w=0$, which yields (2.).    We have proved (1.) $\Rightarrow$ (2.).

The implication (2.)\ $\Rightarrow$\  (1.) is trivial. 
\end{proof}

Recall that a hyperplane $H$ in $\RR^d$ is the zero set of an affine function (i.e., linear polynomial) 
\begin{equation}\label{eqn:hl} l(x) = l(x_1,\ldots,x_d) = \sum_{j=1}^d c_jx_j + b,\quad  H=\{x\in\RR^d\colon l(x)=0\}.
\end{equation}  
If $a\in H$ then $l(a)=0$ so that $b=-\sum c_ja_j$; hence  
\begin{equation} \label{eqn:normal}
 l(x) = n\cdot(x-a)
\end{equation}
 where $n=(c_1,\ldots,c_d)$.  For any other $p\in H$, $n\cdot(p-a)=l(p)=0,$ which is the well-known statement that $n$ is normal to $H$; by linearity, this also yields
\begin{equation}\label{eqn:ab}
n\cdot(x-a)    = n\cdot(x-p),
\end{equation}
so $a\in H$ is arbitrary.   

\begin{lemma} \label{lem:12}
Let distinct, non-parallel hyperplanes $H_1,H_2$ in $\RR^d$ be given as in (\ref{eqn:hl}) by the respective functions $l_1,l_2$.  Let $\eta\in\RR^d$ with $l_1(\eta)>0$ and $l_2(\eta)>0$.  Then there is $\zeta_1\in H_1$, $\zeta_2\in H_2$ such that 
\begin{enumerate}
\item $\eta\in I$ where  $I$ is the closed line segment  joining $\zeta_1$ to $\zeta_2$;

\item If $L$ denotes the line through $\zeta_1$ and $\zeta_2$, then
$$
I=\{x\in L\colon \min\{l_1(x), l_2(x)\}\geq 0\}.
$$ 
\end{enumerate}
\end{lemma}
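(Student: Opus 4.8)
The plan is to realize $I$ as a sub-segment of a line through $\eta$, chosen so that moving along that line crosses $H_1$ on one side of $\eta$ and $H_2$ on the other. Using (\ref{eqn:normal}) write $l_i(x)=n_i\cdot(x-a_i)$ with $a_i\in H_i$, so that for any direction $v\in\RR^d$ and the parametrized line $\gamma(t)=\eta+tv$ we have
\[
l_i\bigl(\gamma(t)\bigr)=l_i(\eta)+t\,(n_i\cdot v),
\]
an affine function of the single real variable $t$. The one genuine point is the choice of $v$: since $H_1$ and $H_2$ are non-parallel, the normals $n_1$ and $n_2$ are linearly independent, so the linear map $v\mapsto(n_1\cdot v,\,n_2\cdot v)$ from $\RR^d$ to $\RR^2$ has rank $2$ and is therefore surjective (one can also extract this from the preceding lemma). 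In particular there is a $v$ with $n_1\cdot v=-1$ and $n_2\cdot v=1$; fix it.

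With this choice, $l_1(\gamma(t))=l_1(\eta)-t$ and $l_2(\gamma(t))=l_2(\eta)+t$. Set $t_1:=l_1(\eta)>0$, $t_2:=-l_2(\eta)<0$, and $\zeta_1:=\gamma(t_1)$, $\zeta_2:=\gamma(t_2)$. Then $\zeta_1\in H_1$, $\zeta_2\in H_2$, and $\zeta_1\neq\zeta_2$ because $t_1\neq t_2$ and $v\neq 0$, so the line $L$ through $\zeta_1$ and $\zeta_2$ is exactly $\gamma(\RR)$. Taking $I:=\gamma([t_2,t_1])$, which is the closed segment joining $\zeta_2$ to $\zeta_1$, we get $\eta=\gamma(0)\in I$ since $t_2<0<t_1$; this is (1).

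For (2) I would just read off the signs along $L$: if $x=\gamma(t)$ then $l_1(x)\ge 0\iff t\le t_1$ and $l_2(x)\ge 0\iff t\ge t_2$, so $\min\{l_1(x),l_2(x)\}\ge 0$ precisely when $t\in[t_2,t_1]$, i.e.\ precisely when $x\in I$. There is no real obstacle beyond the choice of $v$: everything afterward is a one-line computation with affine functions of $t$, and the non-parallel hypothesis is used exactly to arrange that $l_1$ and $l_2$ decrease in opposite directions from $\eta$, so that the region $\{l_1\ge 0,\ l_2\ge 0\}$ on $L$ is a bounded segment containing $\eta$ in its interior.
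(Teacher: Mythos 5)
Your proof is correct. The underlying idea matches the paper's — realize $I$ as the portion of a line through $\eta$ cut out by the two half-spaces $\{l_i\geq 0\}$, exploiting that each $l_i$ is affine along the line — but the route is cleaner. The paper first reduces to $d=2$ by restricting to a plane containing $\eta$ and meeting $H_1\cap H_2$, translates so the hyperplanes pass through the origin, picks $\zeta_1\in H_1$ satisfying the ad hoc normalization $l_2(\zeta_1)=2l_2(\eta)$, and then works out that the line through $\zeta_1$ and $\eta$ meets $H_2$ at the parameter value $-1$. You instead stay in $\RR^d$, choose the \emph{direction} $v$ up front using linear independence of $n_1,n_2$ (so that $n_1\cdot v=-1$, $n_2\cdot v=1$), and the endpoints and the sign analysis for part (2) fall out of the explicit formulas $l_1(\gamma(t))=l_1(\eta)-t$, $l_2(\gamma(t))=l_2(\eta)+t$ in one line. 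What your choice of $v$ buys is that the dimension reduction and translation become unnecessary, and the surjectivity argument makes transparent exactly where the non-parallel hypothesis is used. One tiny cosmetic point: you might note that if $v$ were zero the map $t\mapsto\gamma(t)$ would not parametrize a line, but $v\neq 0$ is automatic from $n_2\cdot v=1$, so nothing is missing.
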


\begin{proof}
Without loss of generality, we may assume $d=2$ by restricting to a plane that intersects $\{\eta\}\cup(H_1\cap H_2)$.  In the plane, we may suppose, following a possible translation, that $H_1$ and $H_2$ are lines through the origin, and  $l_1,l_2$ are the corresponding linear functions for which $H_j=\{x\in\RR^2\colon l_j(x)=0\}$.

Let $\zeta_1\in H_1$ satisfy $l_2(\zeta_1)=2l_2(\eta)$.    The line through $\zeta_1$ and $\eta$ is given by $t\mapsto t\zeta_1+(1-t)\eta$.  It intersects $H_2$ at a point $\zeta_2$ corresponding to the parameter $t=-1$:
$$
0=l_2(\zeta_2)=   l_2(t\zeta_1 +(1-t)\eta ) = tl_2(\zeta_1)+(1-t)l_1(\eta) = (1+t)l_2(\eta).  
$$
This gives the line segment $I$.  Another calculation yields $l_1(\zeta_2)=2l_1(\eta)$.  As one goes along $I$ from $\zeta_1$ to $\zeta_2$,  $l_1$ increases linearly from $0$ to $2l_1(\eta)$  while $l_2$ decreases linearly  from $2l_2(\eta)$ to $0$.  The second statement follows easily.
\end{proof}

\begin{lemma}\label{lem:1}
Let $H_1,\ldots,H_d$ be affine hyperplanes given by linear equations 
\begin{equation}\label{eqn:H} l_1(x)=0,\ \ldots,\  l_d(x)=0, \end{equation}  
and let $n_1,\ldots,n_d$ be the respective normal vectors as in (\ref{eqn:normal}).   
  Let $j\leq d$ and suppose the vectors $n_1,\ldots,n_j$ are linearly independent.  Then $H_1\cap\cdots\cap H_j$ is nonempty.  If $p$ is a point of the intersection, then $l_k(x)=n_k\cdot(x-p)$ for all $k=1,\ldots,j$ and 
 $$S:=\{x\in\RR^d\colon l_k(x)\geq 0 \hbox{ for all } k=1,\ldots,j\}$$ is a cone over $p$. 
\end{lemma}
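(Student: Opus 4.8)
The plan is to verify the three assertions in order, each of which is essentially linear algebra. First I would establish nonemptiness of $H_1\cap\cdots\cap H_j$. Writing each $l_k(x)=n_k\cdot x + b_k$, the intersection is the solution set of the linear system $n_k\cdot x = -b_k$ for $k=1,\ldots,j$. Since $n_1,\ldots,n_j$ are linearly independent, the $j\times d$ matrix $N$ with rows $n_k$ has rank $j$, so the map $x\mapsto Nx$ is surjective onto $\RR^j$; in particular $(-b_1,\ldots,-b_j)$ lies in its image, so a solution $p$ exists.

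Next, given such a $p$, for each $k$ we have $l_k(p)=0$, so by the discussion around equation (\ref{eqn:normal}) — specifically, substituting $a=p$ into (\ref{eqn:hl})--(\ref{eqn:normal}) — we get $l_k(x)=n_k\cdot(x-p)$. This is immediate: $l_k(x)=n_k\cdot x+b_k$ and $0=l_k(p)=n_k\cdot p+b_k$ forces $b_k=-n_k\cdot p$.

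Finally, to show $S$ is a cone over $p$, I would show that for every $x\in S$ and every $t\geq 0$, the point $p+t(x-p)$ again lies in $S$. Using the representation just established, $l_k\bigl(p+t(x-p)\bigr)=n_k\cdot\bigl(t(x-p)\bigr)=t\,\bigl(n_k\cdot(x-p)\bigr)=t\,l_k(x)\geq 0$ whenever $l_k(x)\geq 0$ and $t\geq 0$. Since this holds for all $k=1,\ldots,j$, the ray from $p$ through $x$ stays in $S$, which is the definition of $S$ being a cone with vertex $p$. (Note $p\in S$ since $l_k(p)=0\geq 0$.)

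I do not expect any genuine obstacle here; the lemma is a bookkeeping statement assembling facts about affine functions and their normal vectors from the preceding paragraphs. The only point requiring a moment's care is the nonemptiness claim, where one must invoke that linear independence of the rows of $N$ gives surjectivity of $x\mapsto Nx$ (equivalently, the augmented matrix $[N \mid -b]$ has the same rank $j$ as $N$, so the Rouché–Capelli condition for consistency is met); everything else is a one-line substitution.
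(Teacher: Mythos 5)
Your proof is correct and follows essentially the same route as the paper: express the intersection as a linear system whose coefficient matrix has full rank $j$ by the linear independence of the $n_k$, deduce nonemptiness, rewrite each $l_k$ with base point $p$, and observe that $l_k$ applied to the ray from $p$ scales by $t\geq 0$. The only difference is cosmetic — you state the rank argument slightly more carefully (via surjectivity of $x\mapsto Nx$) where the paper sets up a matrix $A$ with a minor transposition slip — but the substance is identical.
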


\begin{proof}
As in (\ref{eqn:normal}), write $l_k(x)=n_k\cdot(x-a_k)$ where $a_k\in H_k$, $k=1,\ldots,j$.  By linear algebra, the hyperplanes have nonempty intersection, because if $x$ satisfies (\ref{eqn:H}) then it solves the linear system $Ax=c$, where $A$ is the matrix for which $A_{ik}$ is the $i$-th entry of $n_k$ and the entries of $c$ are given by $c_k:=n_k\cdot a_k$.  The hypothesis on the $n_k$s says that $A$ is of rank $j$; in particular, the system has a solution, $p$  say.  

Since $p\in H_j$ for each $j$, we have by the argument preceding (\ref{eqn:ab}) that
$$
S=\{x\in\RR^d\colon n_k\cdot(x-p)\geq 0 \hbox{ for all } k=1,\ldots,j\}.
$$
Thus $x\in S$ means that $x= v+p$ for some vector $v$ that satisfies $v\cdot n_j\geq 0$ for all $j$.  Then $\lambda v\cdot n_j\geq 0$ if $\lambda\geq 0$, so $x(\lambda):= p+\lambda v$ is also in $S$.  Thus if $S$ contains $x$, then it also contains the ray starting at $p$ and going through $x$.  Hence $S$ is a cone with vertex at $p$; in particular, we may write
$
S=\{p+\lambda v\colon \lambda\geq 0, v\in \hat S \}$,
where $\hat S = \{w\in\RR^d\colon w\in S, \|w-p\|=1\} .$
\end{proof}

\section{Simplices} \label{sec:simplices}

In this section we look at the geometry of simplices and characterize them by a generic linear independence condition on the supporting hyperplanes of their faces.  We then show that a compact convex polytope whose faces satisfy the condition may be generated by its supporting simplices. 

\begin{notation}\rm Given a set $S\subset\RR^d$, let $\co(S)$ denote its \emph{closed convex hull}, i.e., the smallest closed convex set containing $S$. \end{notation}

Recall that a $j$-dimensional simplex in $\RR^d$ is a set of the form $\co(\{p_0,\ldots,p_{j}\})$ for points in \emph{general linear position}, which means that $\{p_0,\ldots,p_j\}$ is not contained in a $j$-dimensional affine hyperplane.  Equivalently, the vectors $v_k:=p_k-p_l$,  $k\neq l$, are linearly independent, where $l\in\{0,\ldots,j\}$ is fixed.  In this section, we concentrate on simplices of full dimension ($j=d$), for which we will give a dual characterization by supporting hyperplanes, or equivalently, their affine defining functions.

Let $H_0,\ldots, H_{d}$ be hyperplanes defined by the affine functions $l_0,\ldots,l_d$, and define $L_j\colon\RR^d\to\RR^d$ for $j=0,\ldots,d$ by \begin{equation}\label{eqn:L}
L_j(x) := (l_0(x),\ldots,l_{j-1}(x),l_{j+1}(x),\ldots,l_d(x)).
\end{equation}
\begin{lemma}\label{lem:13} The following two conditions are equivalent for $j\in\{0,\ldots,d\}$.  
\begin{enumerate}
\item $L_j$ is invertible and $p_j=L_j^{-1}(0)$.
\item $\{p_j\} = \bigcap_{k\neq j}H_k$.  
\end{enumerate}

Suppose the above conditions hold for $j=j_1,j_2$.  Then $p_{j_1}\neq p_{j_2}$ if and only if $l_{j_1}(p_{j_1})\neq 0$.  (And if and only if $l_{j_2}(p_{j_2})\neq 0$, by symmetry.) 
\end{lemma}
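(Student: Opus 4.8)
The plan is to view the first equivalence as a repackaging of the standard fact that an affine self-map of $\RR^d$ is invertible precisely when its fiber over a point is a single point, and to dispatch the last assertion by bookkeeping which of the functions $l_k$ vanish at $p_{j_1}$ and at $p_{j_2}$. For the equivalence of (1) and (2), I would first note that, by the normal-vector form (\ref{eqn:normal}), $L_j$ is an affine map $\RR^d\to\RR^d$ whose linear part is $v\mapsto(n_0\cdot v,\ldots,n_{j-1}\cdot v,n_{j+1}\cdot v,\ldots,n_d\cdot v)$, and that $L_j(x)=0$ says exactly that $x\in H_k$ for every $k\neq j$; hence $L_j^{-1}(0)=\bigcap_{k\neq j}H_k$ as sets. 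If (1) holds, this set is the single point $L_j^{-1}(0)=p_j$, which is (2). Conversely, if (2) holds then $L_j^{-1}(0)$ is a single point; since the nonempty fibers of an affine map are translates of the kernel of its linear part, a fiber being a single point forces that kernel to be $\{0\}$, so the linear part---hence $L_j$---is a bijection and $p_j=L_j^{-1}(0)$, which is (1). (Equivalently, by Lemma \ref{lem:1}, (1) holds iff the $d$ normals $n_k$, $k\neq j$, are linearly independent, in which case the square linear system $L_j(x)=0$ has a unique solution.)

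For the last assertion, assume (1)--(2) hold for $j=j_1$ and for $j=j_2$, with $j_1\neq j_2$. By (2), $p_{j_1}\in\bigcap_{k\neq j_1}H_k$, so $l_k(p_{j_1})=0$ for every $k\neq j_1$; likewise $l_k(p_{j_2})=0$ for every $k\neq j_2$. If $p_{j_1}=p_{j_2}$, then since every index in $\{0,\ldots,d\}$ is distinct from at least one of $j_1,j_2$, this common point annihilates every $l_k$; in particular $l_{j_1}(p_{j_1})=0$. Conversely, if $l_{j_1}(p_{j_1})=0$, then together with $l_k(p_{j_1})=0$ for $k\neq j_1$ we get $l_k(p_{j_1})=0$ for all $k$, hence for all $k\neq j_2$, so $p_{j_1}\in\bigcap_{k\neq j_2}H_k=\{p_{j_2}\}$ by (2), i.e.\ $p_{j_1}=p_{j_2}$. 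Thus $p_{j_1}=p_{j_2}$ if and only if $l_{j_1}(p_{j_1})=0$, which is the contrapositive of the claim; interchanging $j_1$ and $j_2$ gives the parenthetical statement, and comparing the two equivalences yields $l_{j_1}(p_{j_1})\neq 0\iff l_{j_2}(p_{j_2})\neq 0$.

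I do not anticipate a genuine obstacle here: once one records that $p_j$ lies on every $H_k$ with $k\neq j$, everything reduces to linear algebra and indexing bookkeeping. The only points needing a little care are distinguishing the affine map $L_j$ from its linear part when asserting that invertibility is equivalent to the fiber over $0$ being a single point, and keeping track at each step of which of $l_{j_1}(p_{j_1})$, $l_{j_2}(p_{j_2})$ we actually control.
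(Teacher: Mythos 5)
Your proof is correct and takes essentially the same route as the paper. The paper dismisses the (1)$\iff$(2) equivalence as ``elementary linear algebra'' (with the parenthetical remark about a square linear system having a unique solution), and you supply the details via the kernel-of-linear-part argument, which is exactly the intended content. For the final claim, the paper argues the two directions as: $l_{j_1}(p_{j_1})=0\Rightarrow L_{j_2}(p_{j_1})=0\Rightarrow p_{j_1}=p_{j_2}$ (then contraposes), and $l_{j_1}(p_{j_1})\neq 0\Rightarrow L_{j_2}(p_{j_1})\neq 0=L_{j_2}(p_{j_2})\Rightarrow p_{j_1}\neq p_{j_2}$; your two directions are the contrapositives of these, using the same indexing observation that $l_k(p_{j_1})=0$ for all $k\neq j_1$, so adding $l_{j_1}(p_{j_1})=0$ annihilates every $l_k$ and hence lands $p_{j_1}$ in $\bigcap_{k\neq j_2}H_k$. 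No gap; same idea, slightly more spelled out.
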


\begin{proof}
The equivalence of (1.) and (2.) is elementary linear algebra, given that $l_k(p_j)=0$ for all $k\neq j$.  (This is the condition for a system of $d$ linear equations to have a unique solution.)  

If  $l_{j_1}(p_{j_1})=0$, then $L_{j_2}(p_{j_1})=0$, which says that $p_{j_2}=p_{j_1}$.  This gives the forward implication in the last statement by contraposition.  Conversely, if $l_{j_1}(p_{j_1})\neq 0$ then $L_{j_2}(p_{j_1})\neq 0 = L_{j_2}(p_{j_2})$, and hence $p_{j_1}\neq p_{j_2}$. 
\end{proof}

\begin{lemma}\label{lem:14}
Suppose the conditions of Lemma \ref{lem:13} hold for all $j=0,\ldots,d$, with $p_0,\ldots,p_d$ being the corresponding points.  
Either $p_j\neq p_k$ whenever $k\neq j$ (all points are distinct), or $p_0=p_1=\cdots=p_d$ (all points are equal).

 If all points are equal, then the set $S:= \{x\in\RR^d\colon l_j(x)>0 \hbox{ for all }j\}$ is either empty or an unbounded cone.

If all points are distinct, then each subcollection of points is in general linear position. Hence the closed convex hull of these points is a simplex.

\end{lemma}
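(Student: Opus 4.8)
The plan is to argue by a dichotomy driven by the last statement of Lemma \ref{lem:13}. First I would fix any two indices $j_1\neq j_2$ and recall from Lemma \ref{lem:13} that $p_{j_1}=p_{j_2}$ if and only if $l_{j_1}(p_{j_1})=0$, and this is symmetric in $j_1,j_2$. The key observation is that the truth value of ``$l_j(p_j)=0$'' is index-independent in the following sense: if $l_{j_1}(p_{j_1})=0$ for one pair, then $p_{j_1}=p_{j_2}$ for that pair, and then for any third index $k$ we can compare $p_k$ to this common point using the same criterion; chasing the equivalences shows that either $l_j(p_j)=0$ for all $j$ (forcing all $p_j$ equal) or $l_j(p_j)\neq 0$ for all $j$ (forcing all $p_j$ pairwise distinct). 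Concretely: suppose $p_a=p_b$ for some $a\neq b$. Then $l_a(p_a)=0$. For any other index $c$, the point $p_a$ satisfies $l_k(p_a)=0$ for all $k\neq a$ (since $p_a\in H_k$ for $k\neq a$ by Lemma \ref{lem:13}(2)), so in particular $l_k(p_a)=0$ for all $k\neq c$, i.e.\ $L_c(p_a)=0=L_c(p_c)$, whence $p_c=p_a$. So one coincidence propagates to all indices, and the dichotomy is established.

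For the case where all points are equal, say $p_0=\cdots=p_d=:p$, I would invoke Lemma \ref{lem:1}: since $l_k(x)=n_k\cdot(x-p)$ for all $k$ (all hyperplanes pass through the common point $p$), the set $S=\{x:l_j(x)>0\text{ for all }j\}$ is, if nonempty, the interior of a cone with vertex $p$ by the same ray argument as in Lemma \ref{lem:1} (if $x\in S$ then $p+\lambda(x-p)\in S$ for all $\lambda>0$). It cannot be bounded because a nonempty cone with vertex $p$ containing a point $x\neq p$ contains the whole ray through $x$, hence is unbounded; and it cannot equal $\{p\}$ since $l_j(p)=0$, so $p\notin S$. (If one worries about $S$ being a single stray point: a cone with vertex $p$ that is nonempty and open in the relevant sense, or simply any nonempty set invariant under $x\mapsto p+\lambda(x-p)$, $\lambda>0$, is either $\emptyset$ or unbounded.)

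For the case where all points are distinct, I need to show each subcollection $\{p_{i_0},\ldots,p_{i_m}\}$ is in general linear position, equivalently that the vectors $p_{i_k}-p_{i_0}$ ($k=1,\ldots,m$) are linearly independent. The argument: the points $p_0,\ldots,p_d$ together satisfy $p_j\in H_k$ exactly when $k\neq j$; said differently, $l_k(p_j)=0$ iff $k\neq j$, and $l_j(p_j)\neq 0$. Suppose $\sum_{k=1}^m \lambda_k(p_{i_k}-p_{i_0})=0$ with not all $\lambda_k$ zero. Apply a defining function $l_{i_t}$ for some $t\in\{1,\ldots,m\}$, written in the form $l_{i_t}(x)=n_{i_t}\cdot(x-p_{i_0})$ (valid since $p_{i_0}\in H_{i_t}$ because $i_0\neq i_t$): then $0=\sum_k \lambda_k\, n_{i_t}\cdot(p_{i_k}-p_{i_0}) = \sum_k \lambda_k\, l_{i_t}(p_{i_k})$. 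But $l_{i_t}(p_{i_k})=0$ for $k\neq t$ and $l_{i_t}(p_{i_t})\neq 0$, so this collapses to $\lambda_t\, l_{i_t}(p_{i_t})=0$, forcing $\lambda_t=0$. Ranging $t$ over $1,\ldots,m$ kills every coefficient, so the vectors are independent; hence the closed convex hull of any subcollection (in particular of all of $p_0,\ldots,p_d$) is a simplex of the appropriate dimension. I expect the main obstacle to be purely bookkeeping: making sure the reductions ``$l_k(x)=n_k\cdot(x-p)$ whenever $p\in H_k$'' and ``$l_k(p_j)=0 \iff k\neq j$'' are cleanly in place before running the linear-independence computation, so that the substitution $l_{i_t}(p_{i_k})=0$ is genuinely justified for every pair of indices that occurs.
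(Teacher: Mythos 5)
Your proof is correct and rests on the same key facts as the paper's, namely the dichotomy from Lemma \ref{lem:13} and the dual pairing $l_k(p_j)=0$ exactly when $k\neq j$. The only noteworthy difference is in the general-linear-position step: the paper pairs against $n_0$ to kill a single coefficient and then falls back on induction over the size of the subcollection, whereas you pair against $l_{i_t}$ for every $t$ at once, written in the normalization $l_{i_t}(x)=n_{i_t}\cdot(x-p_{i_0})$ (legitimate since $i_0\neq i_t$ gives $p_{i_0}\in H_{i_t}$), and so kill all coefficients in one pass with no induction. Both arguments are correct; yours is slightly more economical, and you also make explicit (which the paper leaves implicit) why $S$ cannot reduce to the single point $p$ in the coincident case, namely $l_j(p)=0$ excludes $p$ from the strict-inequality set.
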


\begin{proof}
Either all points are distinct, or at least two points are equal, say  $p_0=p_1$, which says that
$$
\{p_0\} = \bigcap_{k\neq 0} H_k = \bigcap_{k\neq 1} H_k =  \{p_1\}.   
$$
Taking the intersection with $H_1$, we compute for any $j\in\{1,\ldots,d\}$, that
\[ 
\{p_0\} = \bigcap_{k\neq 0} H_k = \bigcap_{k} H_k \subseteq \bigcap_{k\neq j} H_j =\{p_j\},  \]
and therefore $p_1=p_0=p_j$ for all $j$, i.e., all points are equal.  

If all points are equal, suppose without loss of generality that this point is the origin.  Then the $l_j$s are linear maps and for each $j$, 
$$
l_j(x)>0 \Rightarrow l_j(\lambda x) = \lambda l_j(x) >0 \hbox{ for all }\lambda>0.
$$
Hence $x\in S$ implies $\lambda x\in S$, so $S$ is either empty or an unbounded cone with vertex at the origin.

Suppose all points are distinct.  To verify, say, that $\{p_0,p_1,p_2\}$ are in general linear position, we show that $p_0-p_1$ and $p_2-p_1$ are linearly independent, i.e., 
\begin{equation}\label{eqn:c1c2}
c_1(p_0-p_1) + c_2(p_2-p_1)=0  
\end{equation}
implies $c_1=c_2=0$.  
Since $p_1\in H_0$, we have  $l_0(x) = n_0\cdot(x-p_1)$ for some vector $n_0$ normal to $H_0$ (see (\ref{eqn:ab})), and therefore, by Lemma \ref{lem:13}, 
$$
l_0(p_0)=n_0\cdot(p_0-p_1)\neq 0,\quad l_0(p_2)=n_0\cdot(p_2-p_1)=0.
$$
Take the inner product with $n_0$ on both sides of (\ref{eqn:c1c2}); then applying the above equations, we have $c_1n_0\cdot(p_0-p_1)=0$.  Hence $c_1=0$, and $c_2=0$ follows.

To show that larger subcollections of points are also in general linear position, we induct on the size of the set.  Considering say, $\{p_0,\ldots,p_j\}$, suppose
\begin{equation}\label{eqn:induction}
c_1(p_0-p_1) + c_2(p_2-p_1)+\cdots+c_j(p_j-p_1) = 0.
\end{equation}
As above, we have
$$
n_0\cdot(p_0-p_1)\neq 0, \ n_0\cdot(p_2-p_1)=0, \ \ldots, \ n_0\cdot(p_j-p_1)=0,
$$
and similarly, $c_1=0$.    Hence  equation (\ref{eqn:induction}) holds without the first term on the left-hand side.  By induction, the points of $\{p_1,\ldots,p_j\}$ are in general linear position, so $c_2=\cdots=c_j=0$.  
\end{proof}

In what follows we will assume that the hyperplanes $H_0,\ldots,H_d$ are given by $l_0,\ldots,l_d$ and satisfy:

\begin{itemize}
\item[($\star$)] {\it 
There are $d+1$ distinct points $p_0,\ldots,p_d$ such that $\bigcap_{k\neq j} H_k = \{p_j\}$ and $l_j(p_j)>0$ 
 for each $j=0,\ldots,d$.}
\end{itemize}

\begin{remark} \rm The statement $l_j(p_j)>0$ in  ($\star$) may be obtained from $l_j(p_j)\neq 0$ in the final conclusion of Lemma \ref{lem:13}. We can ensure positivity by possibly replacing $l_j$ with $-l_j$; this does not change  $H_j$.

Also, if condition 1 of Lemma \ref{lem:13} holds for all $j$, then {any collection of $d$ normal vectors (to $d$ hyperplanes) forms a  linearly independent set}.
 \end{remark}

\begin{proposition}\label{prop:14}
Suppose the conditions of Lemma \ref{lem:13} hold, with the $l_j$s satisfying ($\star$).  Then 
$$
\co(\{p_0,\ldots,p_d\}) = \{x\in\RR^d\colon l_j(x)\geq 0 \hbox{ for all } j=0,\ldots,d\}.
$$
\end{proposition}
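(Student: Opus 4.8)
The plan is to prove the two set inclusions separately. Write $\Delta := \co(\{p_0,\ldots,p_d\})$ and $P := \{x\in\RR^d\colon l_j(x)\geq 0\text{ for all }j=0,\ldots,d\}$. The inclusion $\Delta\subseteq P$ is the easy direction: each $l_j$ is an affine function, hence its restriction to $\Delta$ attains its minimum at a vertex, so it suffices to check $l_j(p_k)\geq 0$ for all $j,k$. For $k\neq j$ we have $p_k\in H_j$ by the defining property $\bigcap_{i\neq k}H_i=\{p_k\}$ (since $j\neq k$ means $H_j$ is one of the intersecting hyperplanes), so $l_j(p_k)=0$; and for $k=j$ condition ($\star$) gives $l_j(p_j)>0$. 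Thus $l_j\geq 0$ on all vertices, hence on $\Delta$ by convexity of the superlevel set $\{l_j\geq 0\}$, giving $\Delta\subseteq P$.

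For the reverse inclusion $P\subseteq\Delta$, I would use barycentric coordinates. Since the points are distinct, Lemma \ref{lem:14} tells us they are in general linear position, so $\Delta$ is a genuine $d$-simplex and every $x\in\RR^d$ has unique barycentric coordinates $\mu_0(x),\ldots,\mu_d(x)$ with $\sum_j\mu_j(x)=1$ and $x=\sum_j\mu_j(x)p_j$; moreover $x\in\Delta$ iff all $\mu_j(x)\geq 0$. The key computation is to evaluate $l_k$ at $x=\sum_j\mu_j(x)p_j$: by affineness of $l_k$ (using $\sum_j\mu_j=1$) and the facts $l_k(p_j)=0$ for $j\neq k$, $l_k(p_k)>0$ established above, we get
\[
l_k(x) = \sum_{j=0}^d \mu_j(x)\, l_k(p_j) = \mu_k(x)\, l_k(p_k).
\]
Since $l_k(p_k)>0$, this says $l_k(x)\geq 0 \iff \mu_k(x)\geq 0$. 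Therefore $x\in P$ (all $l_k(x)\geq 0$) is equivalent to all $\mu_k(x)\geq 0$, which is equivalent to $x\in\Delta$. This proves $P\subseteq\Delta$ and in fact $P=\Delta$ directly.

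The main obstacle — really the only thing needing care — is making sure the affineness identity $l_k(x)=\sum_j\mu_j(x)l_k(p_j)$ is applied correctly: an affine function $l(x)=n\cdot x+b$ satisfies $l(\sum\mu_j p_j)=\sum\mu_j l(p_j)$ precisely because $\sum\mu_j=1$ absorbs the constant term $b$, so the barycentric-coordinate condition must be invoked explicitly. Everything else is bookkeeping: that distinctness of the $p_j$ plus Lemma \ref{lem:14} legitimately produces a nondegenerate simplex with well-defined barycentric coordinates, and that $l_j(p_k)=0$ for $j\neq k$ follows from condition 2 of Lemma \ref{lem:13}. One could alternatively prove $P\subseteq\Delta$ by a dimension/compactness argument (show $P$ is bounded, hence a polytope, and match vertices), but the barycentric computation is cleaner and yields the equality in one stroke.
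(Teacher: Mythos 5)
Your proof is correct, and it takes a genuinely different route from the paper. The paper proves $R\subseteq L$ by induction on the dimension $d$: the base case $d=1$ is checked directly, and for $d>1$ the cases $l_k(a)=0$ (reduce to the hyperplane $H_k$ and apply the inductive hypothesis) and $l_j(a)>0$ for all $j$ (use Lemma~\ref{lem:12} to produce a segment $I\ni a$ with endpoints on two of the hyperplanes, then appeal to the first case and convexity) are treated separately. Your argument instead passes to barycentric coordinates: because ($\star$) gives distinct points and Lemma~\ref{lem:14} gives general linear position, the coordinates $\mu_0,\ldots,\mu_d$ are well defined on all of $\RR^d$, and the identity $l_k(x)=\mu_k(x)\,l_k(p_k)$ (valid because $l_k$ is affine, $\sum_j\mu_j=1$, and $l_k(p_j)=0$ for $j\neq k$) turns each inequality $l_k(x)\geq 0$ into $\mu_k(x)\geq 0$ since $l_k(p_k)>0$. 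This collapses the two inclusions into a single equivalence and avoids both the induction and the use of Lemma~\ref{lem:12} entirely. The tradeoff is that your approach leans on the prior verification that $\{p_0,\ldots,p_d\}$ spans affinely (Lemma~\ref{lem:14}), whereas the paper's induction is a bit more self-contained at that point; on the other hand, your proof is shorter and dovetails nicely with the barycentric-coordinate formulation the paper itself introduces later in Section~\ref{sec:explicit} for Baran's formula (Theorem~\ref{thm:61}).
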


\begin{proof}
Denote by $L$ the set on the left and by $R$ the set on the right.  Note that for any $x,y\in\RR^d$ we have 
\begin{equation}\label{eqn:lem15} l_j(tx+(1-t)y) = tl_j(x) + (1-t)l_j(y), \hbox{ for each } j.\end{equation}
Hence the left-hand side of the above is nonnegative if $l_j(x),l_j(y)\geq 0$ and $t\in[0,1]$.  It follows that $R$ is convex. Also, $R$ contains $\{p_0,\ldots,p_d\}$ by ($\star$).  Hence $L\subseteq R$.

Conversely,  to show $R\subseteq L$ we do an inductive argument on the dimension $d$.  If $d=1$ then we have two points $x_0,x_1$ and corresponding affine functions $l_0,l_1$.  Using (\ref{eqn:lem15}) it is easy to verify that $R$ is the line segment joining $x_0$ to $x_1$, which coincides with $L$. (So $R=L$ in this case.)

Now consider higher dimensions, $d>1$.  Let $a\in R$, so that $l_j(a)\geq 0$ for all $j$.  Since we need to show that $a\in L$, assume $a\not\in\{p_0,\ldots,p_d\}$ (otherwise $a\in L$ by convexity of $R$ and we are done).  For the purpose of induction, assume the inclusion $R\subseteq L$ holds in dimension less than $d$.  

If $l_k(a)=0$ for some $k$, then $a\in H_k$.  Also, by definition, $p_j\in H_k$ for all $j\neq k$.  Since the hyperplane $H_k$ is an affine space of dimension $d-1$, induction yields 
\[ \begin{aligned}
a &\in \{x\in\RR^d\colon l_j\geq 0 \hbox{ for all } j=0,\ldots d, \ j\neq k \}\cap H_k  \\
&\subseteq  \co(\{p_0,\ldots,p_{k-1},p_{k+1},\ldots, p_{d}\})\cap H_k \subseteq \co(\{p_0,\ldots,p_{d}    \}) \cap H_k  \subseteq L,
\end{aligned} \]
i.e., $a\in L$.

On the other hand, suppose $l_j(a)>0$ for all $j$.  By Lemma \ref{lem:12} there is a line segment $I$ through $a$ that intersects two of the hyperplanes  ($H_j,H_k$ say).  By possibly shrinking the line segment, we may assume that $I\subset R$ with endpoints $\zeta_j\in (H_j\cap\partial R)$ and $\zeta_k\in (H_k\cap\partial R)$.  Now $l_j(\zeta_j)=0$ so we may apply the argument in the previous paragraph to show that $\zeta_j\in L$.  Similarly, $\zeta_k\in L$.  By convexity, $I\subset L$, hence $a\in L$.
\end{proof}



\begin{lemma}\label{lem:119}
Suppose condition ($\star$) holds.  For each $j=0,\ldots,d$, write $l_j(x) = n_j\cdot(x-a_j)$ for some $a_j\in H_j$ and normal vector $n_j$.  Let $S$ denote the set in Proposition \ref{prop:14}. 
Then given $x\in \intt(S)$ and $j\in\{0,\ldots,d\}$,  
\begin{equation}\label{eqn:lem19}
n_j\cdot(x-p_j)<0,  \hbox{ and } n_k\cdot(x-p_j)>0 \hbox{ for all } k\neq j.
\end{equation}
\end{lemma}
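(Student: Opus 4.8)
The plan is to work in barycentric coordinates on the simplex $S=\co(\{p_0,\ldots,p_d\})$, using Proposition \ref{prop:14} to describe $S$ by the inequalities $l_j\geq 0$, and to exploit the affineness of the functions $l_j$. First I would extract two elementary facts. Since $p_i\in\bigcap_{k\neq i}H_k$, in particular $p_i\in H_j$ whenever $i\neq j$, so $l_j(p_i)=0$ for all $i\neq j$. Also $p_j\in H_k$ for every $k\neq j$, so by (\ref{eqn:ab}) we may write $l_k(x)=n_k\cdot(x-p_j)$ for $k\neq j$; hence $n_k\cdot(x-p_j)=l_k(x)$ for $k\neq j$, while for $k=j$ linearity gives $n_j\cdot(x-p_j)=n_j\cdot(x-a_j)-n_j\cdot(p_j-a_j)=l_j(x)-l_j(p_j)$. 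Thus the two inequalities in (\ref{eqn:lem19}) are equivalent to the statements $l_k(x)>0$ for all $k\neq j$ and $l_j(x)<l_j(p_j)$.

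Next I would use the barycentric representation of points of $S$. By Lemma \ref{lem:14} the distinct points $p_0,\ldots,p_d$ are in general linear position, so $S$ is a $d$-simplex and every $x\in S$ has a unique expression $x=\sum_{i=0}^d t_ip_i$ with $t_i\geq 0$ and $\sum_{i=0}^d t_i=1$; moreover $\intt(S)$ consists exactly of those $x$ for which all $t_i>0$. Applying the affine function $l_k$ and using $l_k(p_i)=0$ for $i\neq k$ gives
\[ l_k(x)=\sum_{i=0}^d t_i\,l_k(p_i)=t_k\,l_k(p_k),\qquad k=0,\ldots,d. \]
(In particular $t_k=l_k(x)/l_k(p_k)$, which re-identifies $\intt(S)$ with $\{x\in\RR^d:l_j(x)>0\text{ for all }j\}$.)

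Finally, for $x\in\intt(S)$ I combine these computations: for $k\neq j$,
\[ n_k\cdot(x-p_j)=l_k(x)=t_k\,l_k(p_k)>0 \]
since $t_k>0$ and $l_k(p_k)>0$ by ($\star$); and for $k=j$,
\[ n_j\cdot(x-p_j)=l_j(x)-l_j(p_j)=(t_j-1)\,l_j(p_j)<0, \]
since $0<t_j<1$ (the $t_i$ are positive and sum to $1$) and $l_j(p_j)>0$. This is precisely (\ref{eqn:lem19}). The only point requiring a little care is the description of $\intt(S)$ as the set of convex combinations of the vertices with all weights strictly positive (equivalently, as $\{l_j>0\ \forall j\}$): this is the standard fact that in a full-dimensional simplex no facet inequality is redundant, and once it is in hand the rest is a one-line computation. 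I do not anticipate any genuine obstacle.
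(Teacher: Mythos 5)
Your proof is correct but takes a genuinely different route from the paper. The paper argues geometrically: it considers the ray from $p_j$ through $x$, observes that this ray must exit the bounded convex set $S$ through a boundary point $\zeta$, shows by convexity that $\zeta$ must lie on $H_j$ (else $x$ would lie on some $H_k$, $k\neq j$, contradicting $l_k(x)>0$), and then reads off the sign of $n_j\cdot(x-p_j)=l_j(x)-l_j(p_j)$ from the fact that $l_j$ decreases along the ray from $l_j(p_j)>0$ to $l_j(\zeta)=0$; the inequalities for $k\neq j$ are immediate since $p_j\in H_k$. You instead work in barycentric coordinates: writing $x=\sum t_ip_i$ with $t_i>0$ and $\sum t_i=1$, affineness of $l_k$ together with $l_k(p_i)=0$ for $i\neq k$ gives $l_k(x)=t_k\,l_k(p_k)$, and both inequalities fall out arithmetically. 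Your computation is cleaner and yields as byproducts the identity $t_k=l_k(x)/l_k(p_k)$ and the explicit characterization $\intt(S)=\{l_j>0\ \forall j\}$, but it does lean on the standard (and here unproved) fact that for a full-dimensional simplex the interior is exactly the set of strictly positive convex combinations of the vertices; the paper's ray argument sidesteps that by working directly with the half-space description of $S$ from Proposition \ref{prop:14}.
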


\begin{proof}
The ray starting at $p_j$ and going through $x$ must intersect the boundary of $S$ at a point $\zeta$, since $S$ is a bounded convex set.  We claim that $\zeta\in H_j$.   Otherwise, 
 $\zeta\in H_k$ ($k\neq j$), and  because $p_j\in H_k$, we must have $x\in H_k$ by convexity, i.e., $l_k(x)=0$, which contradicts the fact that $l_k(x)>0$.  So $\zeta\in H_j$, and   
$l_j(p_j)>0=l_j(\zeta),$ 
which means that $l_j$ decreases linearly along the ray.  It follows that $0>l_j(x)-l_j(p_j) = n_j\cdot(x-p_j)$.  

If $k\neq j$ then $p_j\in H_k$, and therefore $n_k\cdot(x-p_j)=l_k(x)>0$.
\end{proof}

\begin{lemma}\label{lem:3.7}
Let $l_0,\ldots,l_d$ be affine maps, $l_j(x)=n_j\cdot(x-a_j)$ for each $j$, such that any choice of $d$ vectors in $\{n_0,\ldots,n_d\}$ is linearly independent.  For each $j$, let $p_j$ be the solution to the system of equations $l_k(x)=0$ for all $k\neq j$.    Let  $$S:=\{x\in\RR^d: l_j(x)\geq 0 \hbox{ for each }j\}.$$    Suppose there exists $x\in S$ and $j\in\{0,\ldots,d\}$ such that  (\ref{eqn:lem19}) holds.  Then the points $p_0,\ldots,p_d$ are distinct, condition $(\star)$ holds for these points, and $S=\co(\{p_0,\ldots,p_d\})$.   
\end{lemma}

\begin{proof}
Suppose $j\in\{0,\ldots,d\}$ and $x\in S$ with $n_j\cdot(x-p_j)<0$.  This says that $l_j(x)-l_j(p_j)<0$, so $l_j(p_j)>l_j(x)>0$.  On the other hand, if $k\neq j$ then $l_j(p_k)=0$, and hence $p_j\neq p_k$.  So the points $\{p_0,\ldots,p_d\}$ are distinct by Lemma \ref{lem:14}.  

For any other $k\neq j$, we have $l_k(x)>0$.  Since $S$ is convex, the closed line segment $I$ joining $p_k$ to $x$ is contained in $S$, so $l_k\geq 0$ on any point of $I$.  If $l_k(p_k)=0$ then $p_k\in H_k$ and therefore 
$$
\bigcap_{\nu\neq k} H_{\nu} =\{p_k\} = \bigcap_j H_j \subseteq \bigcap_{\nu\neq j} H_{\nu} = \{p_j\},
$$
so $p_k=p_j$, contradicting the previous paragraph.  Therefore $l_k(p_k)>0$.

Thus $l_j(p_j)>0$ for all $j$, so condition ($\star$) holds and $S=\co(\{p_0,\ldots,p_d\})$ by Proposition \ref{prop:14}.  
\end{proof}

\begin{remark} \label{rem:210}  \rm
Condition (\ref{eqn:lem19}) on the normal vectors $n_j$  can be observed geometrically.  Let $K\subset\RR^2$ be the quadrilateral $K=\{ x\in\RR^2\colon \ell_j(x)\geq 0\}$   given by
$$\ell_1(x)=x_1,\ \ell_2(x)=x_2,\ \ell_3(x) = 3-3x_1-x_2,\  \ell_4(x) = 3-x_1-3x_2. $$
Let $S_j=\{x\in\RR^2\colon \ell_k(x)\geq 0 \hbox{ for } k\neq j\}$.  The normal vectors associated to the $\ell_j$s that define  $S_1$, $S_2$, $S_3$, and $S_4$ respectively are pictured below.

\includegraphics[height=3cm]{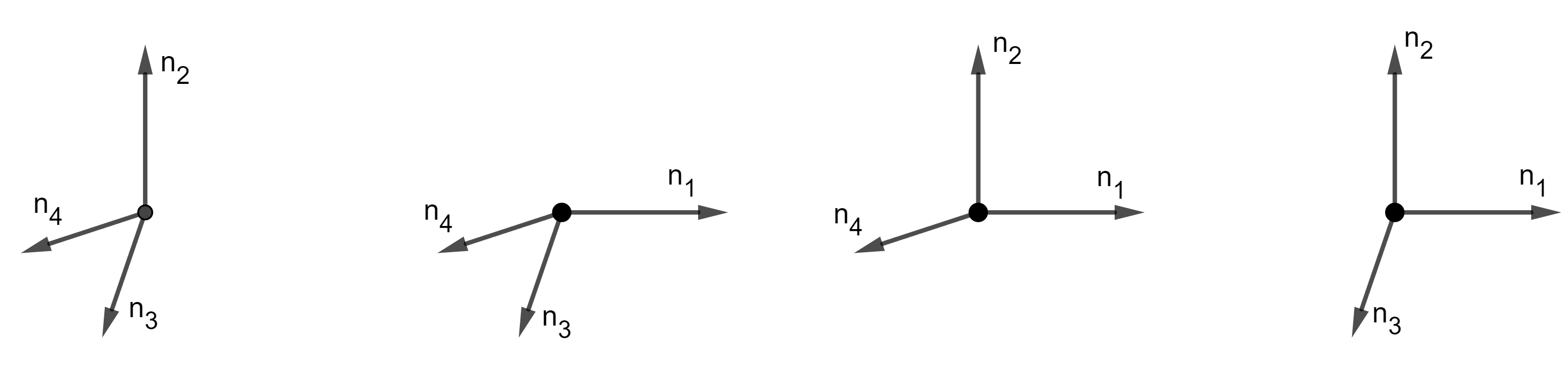}

Here $d=2$ in which each triangle is given by 3 lines.  Then condition (\ref{eqn:lem19}) says the following:  for any two normal vectors, the (positive) cone generated by these vectors never contains the third vector.    Clearly the normal vectors to the edges of $S_3,S_4$ have this property, but those for $S_1,S_2$ do not.  By Lemmas \ref{lem:119} and \ref{lem:3.7}, only  $S_3,S_4$ form a simplex (i.e., a triangle) containing $K$.  The sets $S_1,S_2$ are (unbounded) cones.
\end{remark}

\begin{theorem} \label{thm:simplices}
Let $N\geq d$ and let $K$ be a compact convex polytope of dimension $d$ in $\RR^d$ with $N+1$ faces $F_0,\ldots,F_N$ of codimension 1.  Let $H_0,\ldots, H_N$ be the hyperplanes containing these faces, $F_j\subset H_j$ for each $j=0,\ldots,N$. Let $n_0,\ldots,n_N$ be the corresponding normal vectors.  

If each collection of $d$ vectors in $\{n_0,\ldots,n_N\}$ is linearly independent, then $K$ is an intersection of at most finitely many $d$-dimensional simplices
$$
K = \bigcap_{j=1}^M S_j,\quad \hbox{where } M\leq \binom{N+1}{d+1},
$$ 
such that each face of $S_j$ contains a face of $K$. 
\end{theorem}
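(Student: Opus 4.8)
The plan is to let $\mathcal S$ be the family of all sets $S_J:=\{x\in\RR^d:\ l_j(x)\ge 0\ \text{for all }j\in J\}$, where $l_j$ is the affine function with $H_j=\{l_j=0\}$ normalized so that $l_j\ge 0$ on $K$ (so that $K=\{x:\ l_j(x)\ge 0,\ j=0,\dots,N\}$ is the irredundant facet presentation --- replacing $n_j$ by $-n_j$ changes neither $H_j$, nor the hypothesis, nor $K$), and where $J$ ranges over the $(d{+}1)$-element subsets of $\{0,\dots,N\}$ \emph{for which $S_J$ is bounded}. Since there are $\binom{N+1}{d+1}$ such subsets, automatically $|\mathcal S|\le\binom{N+1}{d+1}$, and each $S_J$ obviously contains $K$. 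Two things then need to be proved: (A) every bounded $S_J$ is a $d$-simplex each of whose facets contains a facet of $K$; and (B) for each $i\in\{0,\dots,N\}$ there is a $(d{+}1)$-subset $J\ni i$ with $S_J$ bounded. Granting these, $\bigcap\mathcal S\supseteq K$ trivially, while if $x\notin K$ then $l_i(x)<0$ for some $i$ and (B) supplies $S_J\in\mathcal S$ with $i\in J$, so $x\notin S_J$; hence $K=\bigcap\mathcal S$, and relabelling $\mathcal S=\{S_1,\dots,S_M\}$ finishes.

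For (A): by the hypothesis, for each $j\in J$ the $d$ normals $\{n_k:k\in J\setminus\{j\}\}$ are linearly independent, so Lemma~\ref{lem:13} applies to the hyperplanes $\{H_j:j\in J\}$ and produces points $p_j$ ($j\in J$). Because $S_J$ is bounded and $d$-dimensional (it contains $K$), the ``all $p_j$ equal'' alternative of Lemma~\ref{lem:14} is impossible (it would make $\intt(S_J)$ empty or an unbounded cone), so the $p_j$ are distinct and hence $l_j(p_j)\ne 0$ by Lemma~\ref{lem:13}. I would then upgrade this to $l_j(p_j)>0$ for all $j\in J$, i.e.\ to condition ($\star$), by contradiction: if $l_{j_0}(p_{j_0})<0$, then by Lemma~\ref{lem:1} the set $D:=\{x:\ l_k(x)\ge 0,\ k\in J\setminus\{j_0\}\}$ is a simplicial cone with apex $p_{j_0}$ and $S_J=D\cap\{l_{j_0}\ge 0\}$; choosing $y\in S_J$ and expanding $y-p_{j_0}$ along the extreme rays of $D$, the inequality $l_{j_0}(y)\ge 0>l_{j_0}(p_{j_0})$ forces $n_{j_0}$ to have positive inner product with one of the edge directions $e_{m_0}$, and then the entire ray $y+\RR_{\ge 0}e_{m_0}$ lies in $S_J$, contradicting boundedness. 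With ($\star$) in hand, Proposition~\ref{prop:14} gives $S_J=\co\{p_j:j\in J\}$, and Lemma~\ref{lem:14} says these $d+1$ points are in general linear position, so $S_J$ is a $d$-simplex. Its facet opposite $p_k$ is $\co\{p_j:j\in J,\ j\ne k\}$, which lies in $H_k$ (each $p_j$ with $j\ne k$ lies in $H_k$) and therefore contains the facet $F_k=K\cap H_k$ of $K$; more generally a face $S_J\cap\bigcap_{k\in B}H_k$ contains the (possibly empty) face $K\cap\bigcap_{k\in B}H_k$ of $K$.

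The heart of the matter is (B). Since $K$ is bounded, its recession cone $\{u:\ n_j\cdot u\ge 0\ \text{for all }j\}$ is $\{0\}$, equivalently $\mathrm{cone}\{n_0,\dots,n_N\}=\RR^d$; writing $-n_i=\sum_j c_j n_j$ with $c_j\ge 0$ and isolating the $n_i$-term shows $-n_i=\tfrac{1}{1+c_i}\sum_{j\ne i}c_j n_j\in\mathrm{cone}\{n_j:j\ne i\}$. By Carathéodory's theorem for conical hulls I would pick a \emph{minimal} representation $-n_i=\sum_{j\in P'}\nu_j n_j$ with $P'\subseteq\{0,\dots,N\}\setminus\{i\}$, all $\nu_j>0$, and $\{n_j:j\in P'\}$ linearly independent. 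Then necessarily $|P'|=d$: if $|P'|<d$, the set $\{n_i\}\cup\{n_j:j\in P'\}$ (at most $d$ vectors) is linearly dependent, since $n_i$ lies in the $|P'|$-dimensional span of the $n_j$, $j\in P'$; enlarging it to any $d$-element set of normals keeps it dependent, contradicting the hypothesis. Setting $J:=\{i\}\cup P'$ we get $|J|=d+1$ and a relation $n_i+\sum_{j\in P'}\nu_j n_j=0$ with \emph{all coefficients strictly positive}; hence if $u$ lies in the recession cone of $S_J$ then $0=\sum_{j\in J}\theta_j(n_j\cdot u)$ with $\theta_j>0$ and each $n_j\cdot u\ge 0$, forcing $n_j\cdot u=0$ for every $j\in J$, and since some $d$ of these normals span $\RR^d$ we get $u=0$; thus $S_J$ is bounded. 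I expect (B) --- producing, for each facet hyperplane, a $(d{+}1)$-subset that contains its index and yet carves out a \emph{bounded} region --- to be the main obstacle, and it is exactly here that the hypothesis that every $d$ normals are linearly independent is used, to force the Carathéodory representation to have precisely $d$ terms so that $|J|=d+1$ rather than larger.
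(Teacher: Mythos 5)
Your proof is correct, but it takes a genuinely different route from the paper in the two places where the real work happens. The paper produces the simplex associated to a facet $F_0$ by a direct geometric construction: choose a vertex $p_0\notin F_0$, take the $d$ facets of $K$ through $p_0$, and verify condition~(\ref{eqn:lem19}) at the midpoint $x_0$ of a segment from $p_0$ to $F_0$ by tracking the signs of the $l_j$ along that segment; Lemma~\ref{lem:110} then says at once that the half-space intersection is the simplex $\co\{p_0,\dots,p_d\}$. You instead take \emph{all} $(d{+}1)$-subsets $J$, keep the bounded $S_J$, and argue in two stages: (A) a backward argument that boundedness forces $(\star)$ (via the dichotomy of Lemma~\ref{lem:14} plus a ray-escape argument in the simplicial cone to rule out $l_{j_0}(p_{j_0})<0$), so Proposition~\ref{prop:14} applies; and (B) a Carath\'eodory/recession-cone argument: boundedness of $K$ gives $\mathrm{cone}\{n_j\}=\RR^d$, hence $-n_i\in\mathrm{cone}\{n_j:j\ne i\}$, a minimal conical representation uses linearly independent normals, and the generic-position hypothesis forces exactly $d$ of them, producing a strictly positive relation $n_i+\sum_{j\in P'}\nu_j n_j=0$ which kills the recession cone of $S_J$. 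Both are sound; the paper's is more concrete (and shorter, since the vertex hands you $(\star)$ directly via Lemma~\ref{lem:110}), while yours makes explicit where boundedness of $K$ and the ``any $d$ normals are independent'' hypothesis each enter, which is conceptually illuminating. One small imprecision: in (A) the phrase ``lies in $H_k$ and therefore contains the facet $F_k$'' is a non sequitur as written; the correct reasoning is that $K\subseteq S_J$ gives $F_k=K\cap H_k\subseteq S_J\cap H_k$, which is the facet of $S_J$ opposite $p_k$. This is what you clearly intend, so it is a wording issue rather than a gap.
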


\begin{proof}
Choose faces $F_1,\ldots,F_d$ with corresponding hyperplanes $H_1,\ldots,H_d$ and linear maps $l_1,\ldots,l_d$.  Let $\{p_0\}:=\bigcap_{j=1}^dH_j$.  Then the set 
$$
C:=\{x\in\RR^d\colon l_j(x)\geq 0 \hbox{ for all } j=1,\ldots,d\}
$$
is a closed convex cone containing $K$ with vertex at $p_0$.

  Choose a vector $v$ such that the ray $\{p_0+tv\colon t\in[0,\infty)\}$ goes into the interior of $C$ and meets points in the interior of $K$.  Since $K$ is compact, there is a $\tau\in(0,\infty)$ such that $\zeta:=p_0+\tau v\in\partial K$ and $p_0+tv\not\in K$ for all $t>\tau$.  Now $\zeta$ is contained in a face of $K$, say $\zeta\in F_0$, with supporting hyperplane $H_0$ and affine map $l_0$.  We have   
\begin{equation}\label{eqn:thm111a}
l_0(p_0+tv) \left\{ \begin{aligned} 
&>0\hbox{ if } t<\tau,\\
&=0 \hbox{ if } t=\tau, \\
& <0 \hbox{ if }t>\tau. \end{aligned}\right.
\end{equation}

Let $x_0$ be the midpoint of the segment joining $p_0$ and $\zeta$.    Then by (\ref{eqn:thm111a}), 
 $l_0$  decreases linearly along the segment in the direction of $\zeta-p_0$, or equivalently, the direction of $x_0-p_0$.  Hence 
$$
l_0(\zeta)=0<l_0(x_0)<l_0(p_0) \Longrightarrow n_0\cdot(x_0-p_0)<0,
$$
where as before, $n_0$ is the normal vector to $H_0$ for which $l_0(x)=n_0\cdot(x-a),a\in H_0$. 

If $j\in\{1,\ldots,d\}$, then $l_j$ increases linearly as we go along the ray $p_0+tv$ into the interior of $C$.  Therefore
$$
l_j(p_0)=0<l_j(x_0)<l_j(\zeta) \Longrightarrow n_j\cdot(x_0-p_0)>0.
$$

By Lemma \ref{lem:3.7}, 
\begin{equation}\label{eqn:simplexl}
S:=\{x\in\RR^d\colon l_j(x)\geq 0 \hbox{ for each } j=0,\ldots,d\} =\co(\{p_0,\ldots,p_d\}), 
\end{equation}
where $\{p_k\}=\bigcap_{j\neq k}H_j$.  By Lemma \ref{lem:14}, $S$ is a $d$-dimensional simplex.  By construction, $l_j(x)\geq 0$ for each $x\in K$ and $j=1,\ldots,d$, so $S\subset K$.

In the above argument the selection of faces $F_1,\ldots,F_d$ (and their supporting hyperplanes) was arbitrary.  Hence every supporting hyperplane that contains a face of $K$ is also a hyperplane that contains the face of some simplex containing $K$.  Now every point $a\not\in K$ satisfies $l(a)<0$ for some affine map that gives the supporting hyperplane $H$ of a face of $K$.  By (\ref{eqn:simplexl}) it is also in the complement of a constructed simplex $S\supseteq K$.  
Hence $K$ must be exactly the intersection of such simplices.  Finally note that $K$ cannot be an intersection of more than $\binom{N+1}{d+1}$ distinct simplices, which is the number of ways to choose $d+1$  faces  of $K$.
\end{proof}

\begin{remark}\rm
If $S_j,S_k$ are simplices as in the above theorem and $S_j\subset S_k$, then $S_j=S_k$.  To see this, note that if a codimension 1 face $F_{S_k}$ of $S_k$ contains a codimension 1 face $F_K$ of $K$, then $F_{S_k}\supset F_{S_j}\supset F_K$ where $F_{S_j}$ is a codimension 1 face of $S_j$.  Both $F_{S_j},F_{S_k}$ are contained in the hyperplane containing $F_K$.  We conclude that every hyperplane containing a codimension 1 face of $S_j$ also contains a codimension 1 face of $S_k$ and vice versa.  There are $d+1$ such hyperplanes, which must characterize both $S_j$ and $S_k$ as simplices.  So $S_k=S_j$.  
\end{remark}

\begin{figure}
\includegraphics[height=4cm]{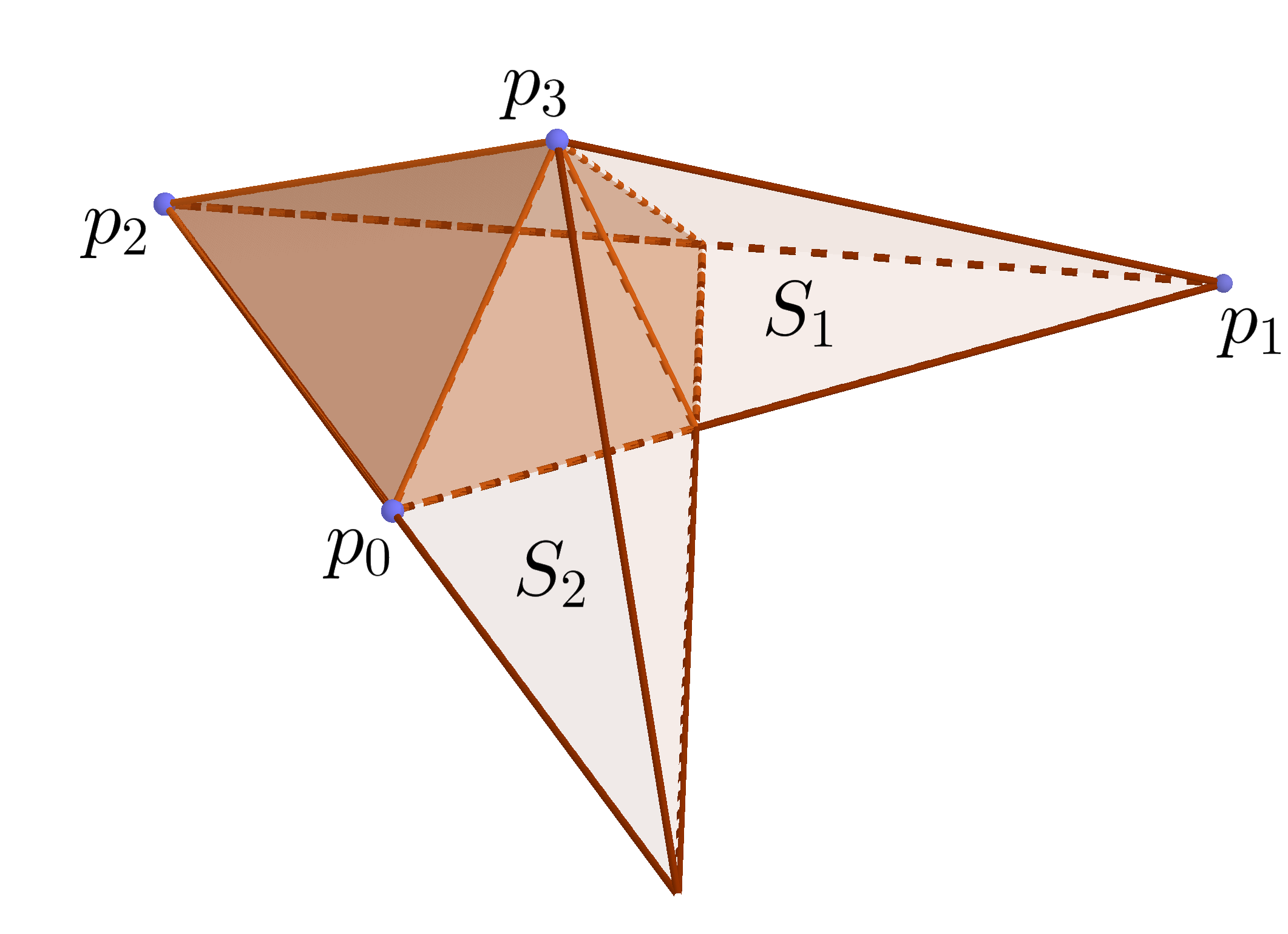}
\caption{Theorem \ref{thm:simplices} in $\RR^3$, with  $K=S_1\cap S_2$ and $S_1=\co\{p_0,p_1,p_2,p_3\}$.}
\end{figure}

Figure 2 illustrates a polytope in $\RR^3$ whose faces satisfy the hypotheses of Theorem \ref{thm:simplices}, and its associated simplices.  (We will consider more general polytopes in the next section where we discuss \emph{strips}.)  The simplices minimally support $K$ in the following sense.

\begin{lemma}\label{lem:simpcirc}
Let $S$ denote one of the simplices $S_j$ in the previous theorem.  For any vector $b\in\RR^d$, the translate $b+K$ contains points not in $S$.
\end{lemma}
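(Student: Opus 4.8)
The claim is that each supporting simplex $S \supseteq K$ from Theorem \ref{thm:simplices} is "minimal" in the sense that no translate $b + K$ of $K$ can fit inside $S$ unless $b = 0$ (and even then $K \subsetneq S$ in general, so actually $b+K \not\subseteq S$ for \emph{every} $b$). The key structural fact is that $S = \co(\{p_0,\ldots,p_d\})$ where each facet $F_j^S$ of $S$ lies on a hyperplane $H_j$ that supports a facet $F_j$ of $K$; in particular $F_j \subseteq H_j \cap K \subseteq H_j \cap S = F_j^S$, so each of the $d+1$ bounding hyperplanes of $S$ actually touches $K$. I would exploit exactly this: $S$ is cut out by $d+1$ inequalities $l_j(x) \geq 0$, and for each $j$ the hyperplane $\{l_j = 0\}$ meets $K$.

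\textbf{Key steps.} First I would fix the simplex $S = \{x : l_j(x) \geq 0,\ j = 0,\ldots,d\}$ with $l_j(x) = n_j \cdot (x - a_j)$, and recall that $l_j$ is not identically zero on $K$ and attains the value $0$ somewhere on $K$ (since $H_j$ supports a face $F_j$ of $K$, which is nonempty of dimension $d-1$). Pick for each $j$ a point $q_j \in F_j \subseteq K$, so $l_j(q_j) = 0$. Now suppose toward a contradiction that $b + K \subseteq S$ for some $b \in \RR^d$. Then in particular $b + q_j \in S$ for every $j$, so $l_j(b + q_j) \geq 0$, i.e.\ $n_j \cdot b + l_j(q_j) \geq 0$, i.e.\ $n_j \cdot b \geq 0$ for all $j = 0,\ldots,d$. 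The second step is to observe that $\{n_0,\ldots,n_d\}$ is a set of $d+1$ vectors in $\RR^d$ in which every $d$-subset is linearly independent (this is part of hypothesis ($\star$)/Theorem \ref{thm:simplices}), and moreover they \emph{positively span} $\RR^d$: indeed the recession cone of the bounded set $S$ is $\{v : n_j \cdot v \geq 0\ \forall j\} = \{0\}$, so by duality (Gordan/Stiemke, or a direct argument) the $n_j$ cannot all lie in a closed half-space through the origin — equivalently $0$ is in the interior of $\co(\{n_0,\ldots,n_d\})$, and there is a strict positive relation $\sum \lambda_j n_j = 0$ with all $\lambda_j > 0$. Combined with $n_j \cdot b \geq 0$ for all $j$, taking the inner product of $\sum \lambda_j n_j = 0$ with $b$ gives $\sum \lambda_j (n_j \cdot b) = 0$ with every term $\geq 0$ and $\lambda_j > 0$, forcing $n_j \cdot b = 0$ for all $j$; but $d+1 > d$ of the $n_j$ span $\RR^d$, so $b = 0$. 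The third step handles $b = 0$: I need $K \neq S$, equivalently $K \subsetneq S$. This holds because $S$ has exactly $d+1$ facets while $K$ has $N + 1 \geq d + 1$ facets, and if $N = d$ then $K$ itself would be a simplex equal to $S$ — so I should restrict attention to the genuinely polytopal case, or state the lemma as "$b + K$ contains points not in $S$" interpreting it (as the paper does) to include the boundary subtlety: when $b = 0$, either $K = S$ (nothing to prove, the statement as literally phrased would be vacuously an edge case) or $K \subsetneq S$ and a vertex of $S$ not in $K$ works. Cleanest is to note $b+K \subseteq S$ forces $b = 0$ hence $K \subseteq S$, and then remark that equality $K = S$ happens iff $K$ is itself that simplex; in all other cases $S \setminus K \neq \emptyset$, so $K = 0 + K$ already contains no points outside $S$ but also $S \supsetneq K$ — I'll phrase the conclusion to match the intended reading that $S$ is minimal: one cannot translate $K$ to sit inside $S$ with room to spare on all facets.

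\textbf{Main obstacle.} The only real content is the positive-spanning fact: that the $d+1$ outward-type normals $n_j$ of a bounded simplex satisfy $0 \in \intt \co(\{n_j\})$, yielding a strictly positive linear dependence $\sum \lambda_j n_j = 0$. This is where boundedness of $S$ is used — it's not automatic from the linear-independence hypothesis alone (a "strip"-like configuration would fail it). I expect to prove it in two lines: if all $n_j$ lay in a closed half-space $\{v : u \cdot v \geq 0\}$ for some $u \neq 0$, then $u$ (or $-u$, after checking signs) would be a nonzero recession direction of $S$, contradicting that $S = \co(\{p_0,\ldots,p_d\})$ is bounded. Once that is in hand, the rest is the short inner-product computation above. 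I'd also make sure to state the lemma's conclusion precisely — "for every $b \in \RR^d$, $b + K \not\subseteq S$ strictly" or "$(b+K) \setminus S \neq \emptyset$ unless $b = 0$ and $K = S$" — so that the subsequent use of the lemma in Section \ref{sec:inscribed} is supported, and I would phrase it to cover the degenerate case where $K$ is already a simplex.
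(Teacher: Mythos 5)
Your argument is correct, but it takes a genuinely different route from the paper's. The paper argues directly and case-by-case: for a nonzero $b$ it picks a bounding hyperplane $H_0$ of $S$ not parallel to $b$, chooses $a_0\in H_0\cap\partial K$, and by examining whether the ray from $a_0$ in direction $\pm b$ enters $K$ it produces an explicit point of $b+K$ outside $S$, switching to a second hyperplane $H_1$ in one of the two cases and using the sign condition of Lemma \ref{lem:119}. Your proof replaces the case analysis by a single duality argument: since each bounding hyperplane of $S$ touches $K$ at some $q_j\in F_j$, the containment $b+K\subseteq S$ forces $n_j\cdot b\geq 0$ for every $j$; since $S=\co\{p_0,\ldots,p_d\}$ is bounded its recession cone $\{v:n_j\cdot v\geq 0\ \forall j\}$ is trivial, which together with the "any $d$ independent" hypothesis gives a strictly positive relation $\sum\lambda_j n_j=0$, and dotting with $b$ forces $n_j\cdot b=0$ for all $j$, hence $b=0$. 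This is cleaner and subsumes the paper's two cases in one stroke, at the modest price of importing a Stiemke-type alternative (or the equivalent direct recession-cone computation you sketch). You also correctly note that the lemma as literally stated fails at $b=0$ (since $0+K=K\subseteq S$); the paper's own proof implicitly excludes $b=0$ by its opening sentence, and the only downstream use (minimality of supporting simplices/strips in Proposition \ref{prop:114}) needs only $b\neq 0$, so the reading you propose is the intended one. Your sketch would be tightened by replacing "$0\in\intt\co\{n_j\}$" with the recession-cone computation directly, since concluding $0\in\intt\co$ quietly uses affine independence of the $n_j$, which in turn is most easily seen after the positive relation is established; the route via "$\{v:n_j\cdot v\geq 0\ \forall j\}=\{0\}\Rightarrow$ the unique dependence has all coefficients of one sign" avoids that small circularity.
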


\begin{proof}
Let $b$ be a vector. The simplex $S$ has a supporting hyperplane, say $H_0$, for which $b$ is not parallel to $H_0$.
Let $a_0\in H_0\cap \partial K (\subset\partial S)$.  There exists $x\in K$ and $\epsilon>0$ such that either $\epsilon b=a_0-x$ or $\epsilon b=x-a_0$.  In the first case, by convexity of $S$, 
$$
x\in S, x+\epsilon b \in\partial S  \   \Rightarrow \   a_0+b = x + (1+\epsilon) b\not\in S.
$$
In the second case, take another supporting hyperplane to $S$, say, $H_1$, with $b$ not parallel to $H_1$.  Let $p_1$ denote the vertex of $S$ that does not lie on $H_1$.    By Lemma \ref{lem:119} (equation (\ref{eqn:lem19})), $n_1\cdot(x-p_1)<0$, so that
\begin{eqnarray*}
0 > n_1\cdot(x-p_1)  
= n_1\cdot( x-a_0) + n_1\cdot(a_0-p_1) 
&=& n_1\cdot(x-a_0) + l_1(a_0) \\
&\geq & n_1\cdot (x-a_0)    \\
&=& \epsilon n_1\cdot b.
\end{eqnarray*}
If we now pick $a_1\in H_1\cap  \partial K$, then
$$
0>\epsilon n_1\cdot b = l_1(a_1+\epsilon b). 
$$
Since $S\subset\{l_1\geq 0\}$,   $a_1+\epsilon b\not\in S$.   Hence by convexity of $S$, $a_1+b\not\in S$.
\end{proof}

\section{Strips} \label{sec:strips}

We will define a \emph{(convex) strip} in $\RR^d$ to be the preimage  $L^{-1}(A)$ of a compact convex set  $A\subset\RR^j$ of dimension $j$, where $j\in\{1,\ldots,d\}$ and $L\colon\RR^d\to\RR^j$ is a linear map of rank $j$.  By rotating coordinates, a strip can be put into the form
$$
S = A\times \RR^{d-j} \subset \RR^j\times\RR^{d-j} = \RR^d;
$$
the set $A$ is called the \emph{($j$-dimensional) orthogonal cross-section}.   

An infinite (in both directions) prism with a polygonal base is an example of a strip.  
Figure 3 illustrates supporting strips in $\RR^3$.

\begin{figure}
\begin{multicols}{2}
\includegraphics[height=4cm]{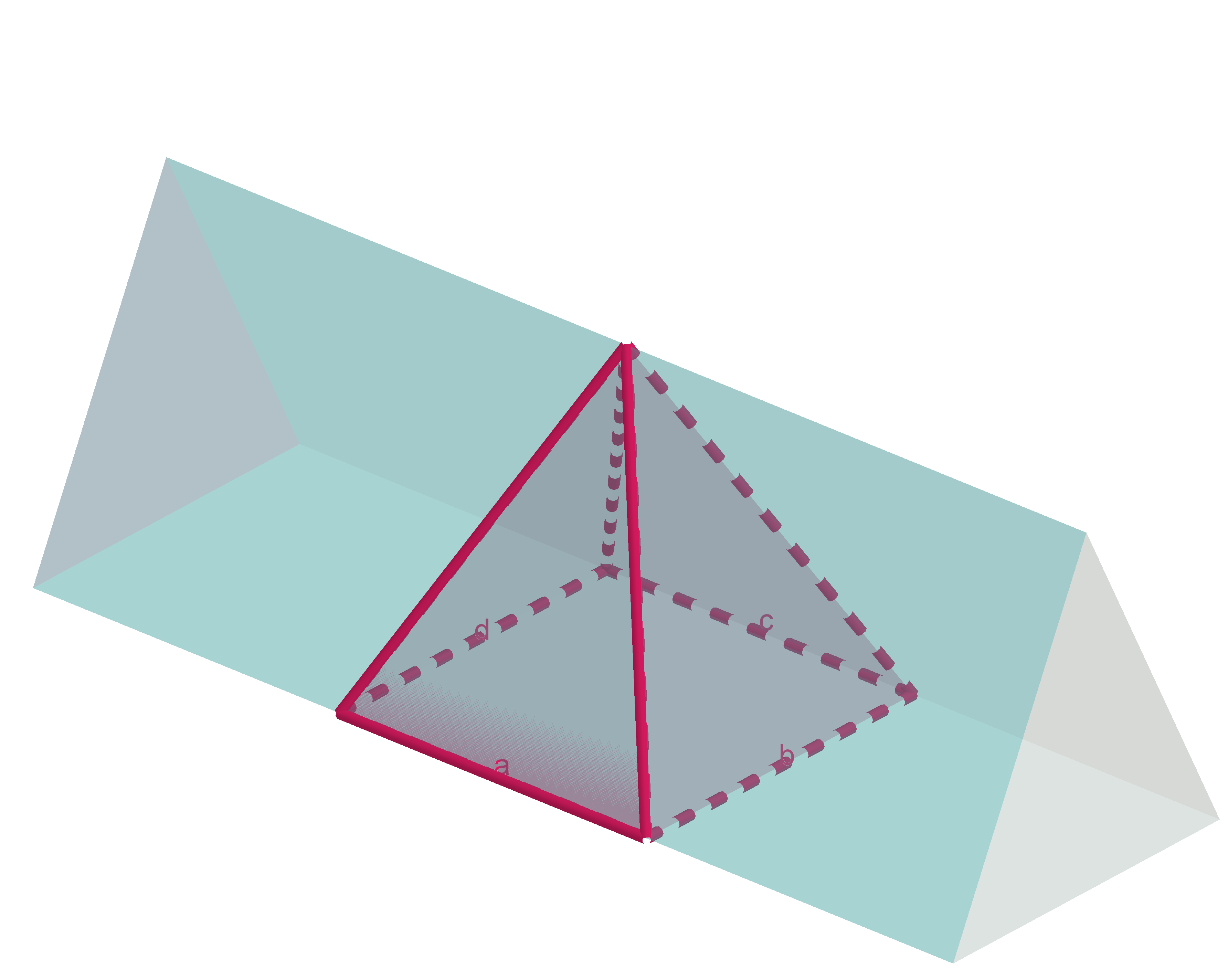}
\includegraphics[height=4cm]{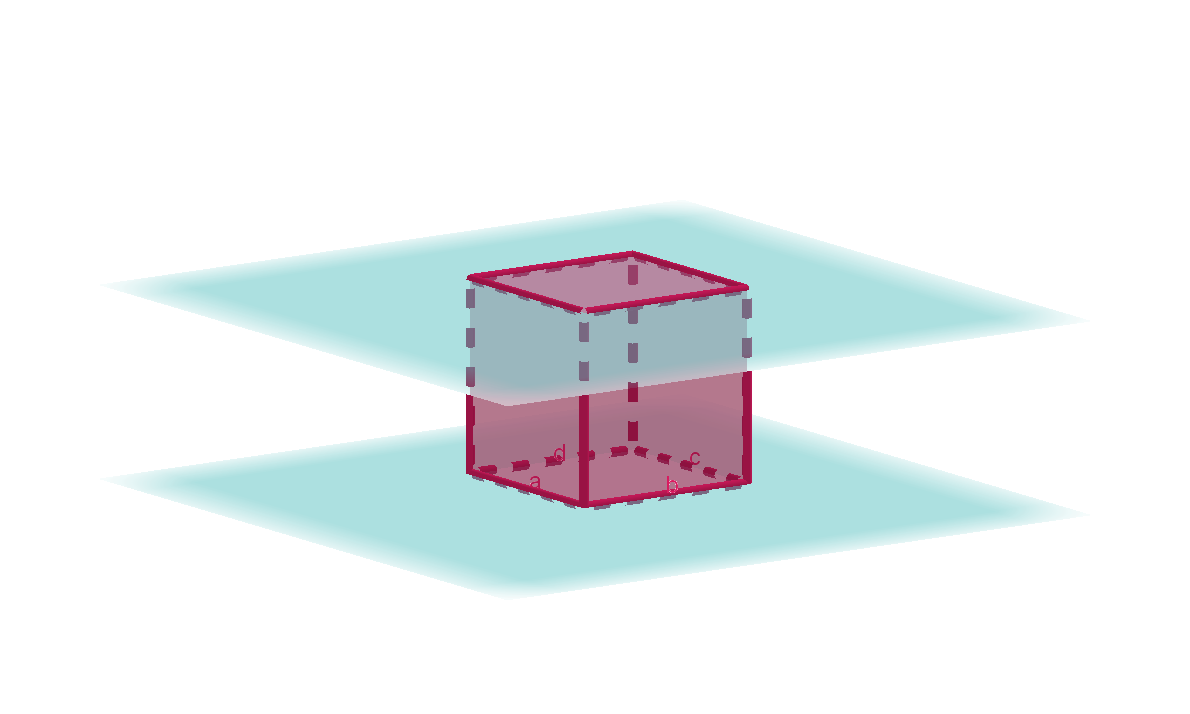}
\end{multicols} \caption{Examples of strips in $\RR^3$.}
\end{figure}


\begin{definition}\rm
Let $K\subset\RR^d$ be a convex set, $K\subseteq S$ where $S$ is a strip.  Then $S$ is a \emph{supporting strip} (to $K$) if for any translation $b$ in a direction parallel to its orthogonal cross-section, $b+K\not\subset S$.
\end{definition}

\begin{lemma}
$S\supset K$ is a supporting strip if and only if there does not exist a translation $b\in\RR^d$ such that $b+K\subset\intt(S)$.
\end{lemma}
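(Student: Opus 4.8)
The plan is to reduce everything to the normal form $S=A\times\RR^{d-j}$ from the definition (by rotating coordinates), so that $\intt(S)=\intt(A)\times\RR^{d-j}$, and then to work with the projection $\pi\colon\RR^d\to\RR^j$ onto the first $j$ coordinates. The key bookkeeping is that for every $b\in\RR^d$ one has $b+K\subseteq S\iff\pi(b)+\pi(K)\subseteq A$, and likewise $b+K\subseteq\intt(S)\iff\pi(b)+\pi(K)\subseteq\intt(A)$; in particular the $\RR^{d-j}$-component of a translation does nothing, the translations "parallel to the orthogonal cross-section" are exactly those lying in $\RR^j\times\{0\}$, and the whole lemma becomes a statement about the compact convex sets $B:=\pi(K)\subseteq A$ in $\RR^j$: some $u\in\RR^j$ satisfies $u+B\subseteq\intt(A)$ if and only if some \emph{nonzero} $u\in\RR^j$ satisfies $u+B\subseteq A$.

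For the forward implication (a supporting strip admits no translate of $K$ inside $\intt(S)$) I would argue by contraposition. If $b_1+K\subseteq\intt(S)$, then $\beta+B\subseteq\intt(A)$ with $\beta:=\pi(b_1)$; since $B$ is compact and $\intt(A)$ open, there is $\delta>0$ with $\beta+x+w\in A$ for all $x\in B$ and all $w\in\RR^j$, $\|w\|\le\delta$. Taking $w=0$ when $\beta\neq0$, and any nonzero $w$ with $\|w\|\le\delta$ when $\beta=0$, we get a nonzero $u\in\RR^j$ with $u+B\subseteq A$; then $b':=(u,0)$ is a nonzero translation parallel to the cross-section with $b'+K\subseteq S$, so $S$ is not a supporting strip.

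For the converse, again by contraposition, suppose $S$ is not a supporting strip, so there is a nonzero $b_0=(\beta_0,0)$ with $b_0+K\subseteq S$, i.e.\ $\beta_0+B\subseteq A$; also $B\subseteq A$. Convexity of $A$ gives $t\beta_0+B\subseteq A$ for all $t\in[0,1]$ (write $t\beta_0+x=t(\beta_0+x)+(1-t)x$), so the erosion $T:=\{u\in\RR^j\colon u+B\subseteq A\}=\bigcap_{x\in B}(A-x)$ is compact, convex, and contains the nondegenerate segment $[0,\beta_0]$. If $T$ has nonempty interior we are done: for $u\in\intt(T)$ one has $u+B+\{w\colon\|w\|\le\varepsilon\}\subseteq A$ for small $\varepsilon>0$, whence $u+B\subseteq\intt(A)$, so $b:=(u,0)$ satisfies $b+K\subseteq\intt(S)$.

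The step I expect to be the main obstacle is exactly this last one: promoting the one-dimensional segment $[0,\beta_0]\subseteq T$ to full-dimensionality of $T$ — equivalently, passing from "$K$ can be moved nontrivially inside $S$" to "$K$ can be moved into $\intt(S)$" — which cannot be done from convexity of $K$ alone and must use what a supporting strip really is here. The cross-section $A$ of a strip in $\calS(K)$ is cut out by (projections of) supporting hyperplanes of $K$, so every facet of $A$ already meets $B=\pi(K)$; since $A$ is bounded, its facet outward normals positively span $\RR^j$, and then $u+B\subseteq A$ forces $\langle n,u\rangle\le 0$ for every such normal $n$, hence $u=0$. In other words, for the strips under consideration one in fact has $T=\{0\}$, and the real content of the lemma is the reduction carried out in the first paragraph together with the identification of ``$\intt(T)\neq\emptyset$'' with ``some translate of $K$ lies in $\intt(S)$''.
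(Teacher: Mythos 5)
Your reduction to the cross-section and your proof of the forward direction (existence of $b$ with $b+K\subset\intt(S)$ implies $S$ is not supporting) are correct and follow the paper's argument: decompose $b$ into a component along the strip and one in the cross-section, discard the former, and (a step the paper glosses over but you correctly supply) handle the case $\pi(b)=0$ using compactness of $B$. Your worry about the converse is well founded, and in fact the converse you are trying to prove is \emph{false} if the definition of supporting strip is read literally with ``$b+K\not\subset S$''. Take $d=3$, $j=2$, $A=[0,1]^2$, $S=A\times\RR$ and $K=[1/4,3/4]\times[0,1]\times[0,1]$. Then $B=\pi(K)=[1/4,3/4]\times[0,1]$ can be translated nontrivially inside $A$ (e.g.\ by $(1/10,0)$), so $S$ fails the literal definition of a supporting strip; yet no translate of $B$ fits inside $\intt(A)=(0,1)^2$, because $B$ already spans the full height of $A$. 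Your erosion $T$ here is the segment $\{0\}\times[-1/4,1/4]$, which has empty interior, and no argument (from convexity of $K$ or otherwise) can promote it to be $j$-dimensional.

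The fix is not the one you sketch at the end, namely restricting to strips in $\calS(K)$: the lemma is stated for an arbitrary strip $S\supset K$, and the fact that facet normals of the cross-section positively span $\RR^j$ is specific to strips cut out by supporting hyperplanes of $K$. The actual resolution is that the definition of supporting strip should be read with $\intt(S)$ in place of $S$, consistent with Definition \ref{def:inscribed} for inscribed ellipses. Under that reading the ``if'' direction of the lemma is immediate (any nonzero parallel $b$ with $b+K\subset\intt(S)$ is in particular some $b\in\RR^d$ with $b+K\subset\intt(S)$), and the only content is the ``only if'' direction --- exactly what your second paragraph and the paper's decomposition argument establish.
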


\begin{proof}
Decompose $b$ into the orthogonal sum $b=b_1+b_2$ where $b_1$ is along the strip $S$ and $b_2$ is along the orthogonal cross-section.  The geometric properties of $b+K$ in relation to $S$ are the same as the geometric properties of $b_2+K$ in relation to $-b_1+S$.  But $-b_1+S=S$.  Thus $b+K\in\intt(S)$ if and only if $b_2+K\in\intt(S)$, in which case $S$ cannot be a supporting strip.
\end{proof}

To obtain supporting simplices in Theorem \ref{thm:simplices}, the normal vectors for any collection of $d$ supporting hyperplanes were required to be linearly independent.  If only $j<d$ normal vectors in some collection are linearly independent, we get a strip.

\begin{proposition}\label{prop:114}
Suppose $H_0,\ldots,H_j$ are supporting hyperplanes to a convex polytope $K\subset\RR^d$, where $j<d$.  Let $l_k(x) = n_k\cdot(x-a_k)$, $a_k\in H_k$, be the corresponding linear maps for $k=0,\ldots, j$.  
Suppose the set of normal vectors $N:=\{n_0,\ldots,n_j\}$ satisfies the following two conditions:
\begin{itemize}
\item $\span(N)$ has dimension $j$ and every subset of $N$ of size $j$ is linearly independent;
\item There exists $x\in K$ and $p_0\in \bigcap_{k=1}^j H_k$ such that 
\begin{equation}\label{eqn:simpcondition}
n_0\cdot(x-p_0)<0 \hbox{ and } n_k\cdot(x-p_0)>0 \hbox{ for all } k\in\{1,\ldots,j\}.
\end{equation}
\end{itemize}
  Then there is a supporting strip to $K$ that is equivalent under rotation to   $\Sigma\times\RR^{d-j}$, where $\Sigma\subset\RR^j$ is a $j$-dimensional simplex.  
\end{proposition}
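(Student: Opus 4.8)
The plan is to push the whole picture into the $j$-dimensional subspace $V := \span(N)$ and recognize the set $S := \{x \in \RR^d \colon l_k(x) \geq 0 \text{ for all } k = 0,\ldots,j\}$ as a cylinder over a $j$-simplex. Since each normal vector $n_k$ lies in $V$, we have $n_k \cdot x = n_k \cdot \pi(x)$ for every $x$, where $\pi \colon \RR^d \to V$ is orthogonal projection; hence each $l_k$ factors as $l_k = \bar l_k \circ \pi$ for a unique affine map $\bar l_k \colon V \to \RR$ whose normal vector is $n_k$. Therefore $S = \pi^{-1}(\Sigma)$, where $\Sigma := \{y \in V \colon \bar l_k(y) \geq 0 \text{ for all } k\}$, and $K \subseteq S$ once the $l_k$ are normalized so that $l_k \geq 0$ on $K$. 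Choosing an orthonormal basis of $\RR^d$ adapted to $\RR^d = V \oplus V^\perp$ identifies $S$ with $\Sigma \times \RR^{d-j}$. So it remains to prove that (i) $\Sigma$ is a $j$-dimensional simplex and (ii) $S$ is a supporting strip to $K$.

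For (i) I would apply Lemma~\ref{lem:110} inside $V \cong \RR^j$ to the affine maps $\bar l_0,\ldots,\bar l_j$ with normal vectors $n_0,\ldots,n_j$: the hypothesis that every $j$-element subset of $N$ is linearly independent is precisely the linear-independence hypothesis of that lemma in dimension $j$. For each $m$ let $q_m \in V$ be the unique solution of $\{\bar l_k = 0 \colon k \neq m\}$. To check hypothesis (\ref{eqn:lem19}), take $m = 0$ and $y := \pi(x)$, and note that $\pi(p_0) = q_0$: since $p_0 \in \bigcap_{k=1}^j H_k$ we get $\bar l_k(\pi(p_0)) = l_k(p_0) = 0$ for $k = 1,\ldots,j$, and $q_0$ is the only point of $V$ with this property. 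As $n_k \in V$, $n_k \cdot (y - q_0) = n_k \cdot \pi(x - p_0) = n_k \cdot (x - p_0)$, so (\ref{eqn:simpcondition}) becomes exactly (\ref{eqn:lem19}) for $y$ and $m = 0$. Lemma~\ref{lem:110} then yields that $q_0,\ldots,q_j$ are distinct and $\Sigma = \co(\{q_0,\ldots,q_j\})$, a $j$-dimensional simplex by Lemma~\ref{lem:14}.

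For (ii) it suffices, by the characterization of supporting strips in the lemma just before this proposition, to rule out a translation $b \in \RR^d$ with $b + K \subset \intt(S)$. Writing $b = b_V + b_W$ with $b_V \in V$ and $b_W \in V^\perp$, the inclusion $b + K \subset \intt(S) = \pi^{-1}(\intt(\Sigma))$ forces $b_V + \pi(K) \subset \intt(\Sigma)$. Each $H_k$ supports $K$, so $H_k \cap K \neq \emptyset$ and hence some $z_k \in \pi(K)$ satisfies $\bar l_k(z_k) = 0$; then $\bar l_k(b_V + z_k) = n_k \cdot b_V$, which must be positive. Thus $n_k \cdot b_V > 0$ for all $k = 0,\ldots,j$. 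But $\Sigma$ is bounded, so no nonzero $v$ satisfies $n_k \cdot v \geq 0$ for every $k$ (otherwise $y + tv \in \Sigma$ for all $y \in \Sigma$, $t \geq 0$), and in particular no vector satisfies all these inequalities strictly --- a contradiction. Hence $S$ is a supporting strip. (Equivalently, since every facet of $\Sigma$ meets $\pi(K)$, one may invoke Lemma~\ref{lem:simpcirc} applied to $\Sigma$ and the polytope $\pi(K)$ in $\RR^j$.)

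The only real subtlety is making sure the projected data $(\bar l_k, n_k, q_m)$ in $V$ is literally the image of $(l_k, n_k, p_j)$ under $\pi$ and that (\ref{eqn:simpcondition}) survives the projection; both hold precisely because the $n_k$ already lie in $V$. Everything else --- the factorization through $\pi$, the identification $S \cong \Sigma \times \RR^{d-j}$, and the boundedness argument in (ii) --- is routine.
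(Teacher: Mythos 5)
Your proof is correct and follows essentially the same route as the paper: project along $\span(N)^\perp$ onto a $j$-dimensional space, check that the hypotheses of Lemma~\ref{lem:110} transport to the projected data, and conclude via Lemma~\ref{lem:14} that the cross-section is a $j$-simplex, with Lemma~\ref{lem:simpcirc} (applied to $\pi(K)$) giving the supporting-strip property. Your direct boundedness argument for (ii) is a welcome bit of extra detail, but the logic is the same as the paper's.
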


\begin{proof}
By rotating coordinates, we may assume without loss of generality that
$$
\span(N) = \{x\in\RR^d\colon x_{j+1}=\cdots=x_d=0\},
$$ 
so that $n_k=(n_{k1},\ldots,n_{kj},0,\ldots,0)$ for all $k$.  Hence 
\begin{eqnarray*}
l_k(x)= n_{k1}x_1 + \cdots + n_{kj}x_k + c_k &=& n_k'\cdot x' + c_k \\ & =:& \tilde l_k(x')
\end{eqnarray*}
where $c_k=-n_k\cdot a_k$ and $n_k',x'$ are the projections of $n_k,x$ to the first $j$ coordinates.

Define 
\begin{equation*} 
 S:=  \{x\in\RR^d \colon l_k(x)\geq 0 \hbox{ for all } k=0,\ldots,j\}.\end{equation*}
  Then we have 
$$
S = \{x'\in\RR^j\colon \tilde l_k(x')\geq 0\hbox{ for all } k=0,\ldots,j \} \times\RR^{d-j} 
=: \Sigma\times\RR^{d-j}. 
$$
Since $n_{k(j+1)}=\cdots=n_{kd}=0$, equation (\ref{eqn:simpcondition}) says that 
$$
n_0'\cdot(x'-p_0')<0 \hbox{ and } n_k'\cdot(x'-p_0')>0 \hbox{ for all } k\neq 0.
$$
By Lemmas \ref{lem:3.7} and \ref{lem:14} applied in dimension $j$,  $\Sigma$ is a $j$-dimensional simplex. 

Finally, we verify that $S$ is a supporting strip.  Since we are only considering translations parallel to the cross-section $\Sigma$, it is sufficient to show that $\Sigma$ is a simplex that minimally supports the projection 
$$
K':=\{x' \colon x\in K\}\subset\RR^j.
$$
But this is true by Lemma \ref{lem:simpcirc}.
\end{proof}

  Before proving the main result of this section, we state an elementary lemma. 
\begin{lemma}\label{lem:4.4}
Suppose the set of vectors $\{n_0,n_1,\ldots,n_k\}$ is linearly dependent, but $\{n_1,\ldots,n_k\}$ is linearly independent.  Then there is $j\leq k$ and $j$ elements, say, $n_{k_1},\ldots,n_{k_j}$, such that the set  
$A:=\{n_0,n_{k_1},\ldots,n_{k_j}\}$ is linearly dependent, but any $j$ elements of $A$ are linearly independent.
\end{lemma}

\begin{proof}
By hypothesis, the collection $\mathcal{D}\subset\mathcal{P}(\{n_0,\ldots,n_k\})$ of subsets that contain linearly dependent elements is a non-empty collection, and each of them contains $n_0$.   Now choose $A\in\mathcal{D}$ with the smallest number of elements.
\end{proof}

\begin{theorem} \label{thm:simpliciesstrips}
Let $K$ be a compact convex polytope of dimension $d$ in $\RR^d$ with $N$ faces.  Then $K$ is a finite intersection
$$
K=\bigcap_{j=1}^M S_j  
$$
where each $S_j$ is either a $d$-dimensional simplex or a strip whose orthogonal cross-section is a lower-dimensional simplex, and each face of $S_j$ contains a face of $K$.
\end{theorem}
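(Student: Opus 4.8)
The plan is to reduce the statement to the two results already established: Theorem \ref{thm:simplices}, which handles the "generic" case where every $d$-subset of the normal vectors is linearly independent, and Proposition \ref{prop:114}, which produces a supporting strip whenever a suitable minimal linearly dependent subfamily of normals is found. The crux is therefore to remove the genericity hypothesis from Theorem \ref{thm:simplices}: given an arbitrary compact convex polytope $K$ of dimension $d$ with faces $F_0,\ldots,F_{N-1}$ of codimension 1, supporting hyperplanes $H_i$ and normals $n_i$, I want to show that each "bounding inequality" $l_i(x)\geq 0$ can be certified either by a $d$-simplex supporting $K$ along that face, or by a strip of the asserted form. Summing up: $K=\bigcap_i\{l_i\geq 0\}$, and if every one of those half-spaces contains a simplex or strip from our list that still contains $K$, then $K$ equals the intersection of that finite list.

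First I would fix a face, say $F_0$ with hyperplane $H_0$, normal $n_0$, affine map $l_0$, and pick a vertex $p_0\notin F_0$; as in the proof of Theorem \ref{thm:simplices}, $p_0$ lies on at least $d$ of the other faces, and taking a midpoint $x_0$ of a segment from $p_0$ into $F_0$ gives a point of $\intt(K)$ for which the sign conditions $n_0\cdot(x_0-p_0)<0$, $n_k\cdot(x_0-p_0)>0$ hold for the chosen indices $k$. Now $\{n_0\}\cup\{$those $d$ normals$\}$ is a set of $d+1$ vectors in $\RR^d$. If it happens that every $d$-subset is linearly independent, Lemmas \ref{lem:110} and \ref{lem:14} give a $d$-simplex $S\supseteq K$ with $S\subseteq\{l_0\geq 0\}$ and each face of $S$ on a face of $K$, exactly as before. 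Otherwise the set is degenerate: since $n_0$ is certainly among the vectors involved in a dependency (the $d$ chosen normals $n_{k}$ are linearly independent — they are normals to $d$ facets meeting at the vertex $p_0$, hence their common zero set is the single point $p_0$), Lemma \ref{lem:115} extracts a subset $A=\{n_0,n_{k_1},\ldots,n_{k_j}\}$ with $j\le d$, which is linearly dependent but every $j$-subset is independent; in particular $\span(A)$ has dimension $j$ and every $j$-element subset is a basis of it. The sign condition (\ref{eqn:simpcondition}) for this subfamily is inherited from the sign conditions at $x_0$, with $p_0'$ taken to be $p_0$ itself (which lies on $H_{k_1},\ldots,H_{k_j}$ since it lies on all the facets through it). Proposition \ref{prop:114} then yields a supporting strip $S=\Sigma\times\RR^{d-j}$ (up to rotation) with $\Sigma$ a $j$-simplex, $S\supseteq K$, $S\subseteq\{l_0\ge 0\}$, and each face of $S$ containing a face of $K$.

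Next I would run this construction once for every facet of $K$ in the role of $H_0$, obtaining a finite collection $\{S_1,\ldots,S_M\}$ of simplices and strips, each containing $K$, each with all faces lying on faces of $K$, and with $M$ bounded by a count of the choices involved (at most one $S_j$ per facet of $K$ in fact, after discarding repetitions, so $M\le N$ works, though a cruder bound like $\binom{N}{d+1}+\binom{N}{d}+\cdots$ would also do). Finally I would verify $K=\bigcap_{j=1}^M S_j$: the inclusion $\subseteq$ is immediate since each $S_j\supseteq K$; for $\supseteq$, any $a\notin K$ violates $l_i(a)<0$ for some facet hyperplane $H_i$, and the piece $S_j$ built from that facet satisfies $S_j\subseteq\{l_i\ge 0\}$, so $a\notin S_j$, hence $a\notin\bigcap S_j$. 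This closes the argument.

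The main obstacle I anticipate is the bookkeeping in the degenerate branch: checking carefully that the vertex $p_0$ and the interior point $x_0$ produced for the face $F_0$ actually satisfy hypothesis (\ref{eqn:simpcondition}) for the pruned normal set $A$ coming out of Lemma \ref{lem:115} — one must be sure that $p_0$ still lies in $\bigcap_{k=1}^{j}H_{k_\ell}$ and that the strict inequalities survive restriction to the sub-collection — and, relatedly, confirming that Proposition \ref{prop:114}'s conclusion "each face of $S_j$ contains a face of $K$" genuinely transfers (the faces of $\Sigma\times\RR^{d-j}$ correspond to the hyperplanes $H_{k_\ell}$, which are facet hyperplanes of $K$ by construction). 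Everything else is a direct reassembly of Theorem \ref{thm:simplices}, Proposition \ref{prop:114}, and Lemma \ref{lem:115}, together with the elementary complement argument for the intersection.
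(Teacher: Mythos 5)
Your proposal is correct and follows essentially the same route as the paper: run the facet-by-facet construction of Theorem~\ref{thm:simplices} starting from a vertex $p_0\notin F_0$, fall back on Lemma~\ref{lem:115} and Proposition~\ref{prop:114} to obtain a supporting strip whenever the chosen $d+1$ normals are degenerate, and finish with the complement argument to identify $K$ with the intersection. One small tightening: in the degenerate branch the minimal dependent subset extracted by Lemma~\ref{lem:115} automatically has $j<d$ (not merely $j\le d$), since $j=d$ would contradict the assumed degeneracy and $j<d$ is exactly what Proposition~\ref{prop:114} requires.
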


\begin{proof}
Choose a face, say $F_0$, of $K$, and vertex $p_0\not\in F_0$.  An examination of the proof of Theorem \ref{thm:simplices} shows that there are supporting hyperplanes $H_0,\ldots,H_d$ of $K$, such that $H_0$ contains $F_0$, $\{p_0\}= \cap_{k=1}^d H_k$, and for some $x\in\intt(K)$, 
$$
n_0\cdot(x-p_0)<0,\ n_k\cdot(x-p_0)>0 \hbox{ for all } k=\{1,\ldots,d\},
$$
where $n_0,\ldots,n_d$ are the corresponding normal vectors.  Thus condition (\ref{eqn:lem19}) holds with $j=0$.  

If each subset of  $\{n_0,\ldots,n_d\}$ of size $d$ is linearly independent, then, as in  Theorem \ref{thm:simplices},
\begin{equation}\label{eqn:simplex}
S:=\{x\in\RR^d\colon l_k(x)\geq 0 \hbox{ for all } k=0,\ldots,d \}
\end{equation}
is a supporting simplex of $K$.  

Otherwise, note that by construction $\{n_1,\ldots,n_d\}$ is linearly independent, since the corresponding hyperplanes $H_1,\ldots,H_d$ intersect at a point.    In view of the previous lemma, $\{n_k\}_{k=0}^j$, say, is linearly dependent for some $j<d$,  but each subset of size $j$ is linearly independent.  Then 
(\ref{eqn:simpcondition}) holds, and by Proposition \ref{prop:114}, the set 
\begin{equation}\label{eqn:strip}
S:=\{x\in\RR^d\colon l_k(x)\geq 0 \hbox{ for all }k=0,\ldots,j \}
\end{equation}
is a supporting strip of $K$ whose cross-section is a $j$-dimensional simplex.

Since $K$ has a finite number of faces, there are at most a finite number of distinct supporting simplices and strips $S_j$, with the property that each face of $S_j$ contains a face of $K$.
\end{proof}

\section{Pluripotential theory} \label{sec:pluripotential}

Let $\calL=\calL(\CC^d)$ denote the \emph{Lelong class}, which is the class of plurisubharmonic (psh) functions on $\CC^d$ of at most logarithmic growth: $u\in\calL$ if there exists $C\in\RR$ such that
$$
u(z) \leq \log^+|z| +C \hbox{ for all }z\in\CC^d,
$$
where $\log^+|z|=\max\{0,\log|z|\}$.  The class $\calL^+=\calL^+(\CC^d)$ is given by those functions $u\in\calL$ for which there also exists $c\in\RR$ such that the lower bound
$$
u(z)\geq \log^+|z| +c \hbox{ for all }z\in\CC^d
$$ holds.  

Recall that the (Siciak-Zaharjuta) extremal function is defined by (\ref{eqn:szextremal}) as an upper envelope of polynomials.  A theorem of Siciak and Zaharjuta says that this function is also given as an upper envelope of functions in $\calL$: for $K\subset\CC^d$ compact,
\begin{equation}\label{eqn:vkdef}
V_K(z) = \sup\{ u(z)\colon u\in\calL, \ u\leq 0\hbox{ on } K\}.
\end{equation}
Clearly $V_{K_1}\leq V_{K_2}$ if $K_1\supseteq K_2$, by definition.   

 The \emph{upper semicontinuous regularization of $V_K$} is $V_K^*(z):= \limsup_{s\to z} V_K(s)$.

\medskip

 We recall below some properties of the extremal function in $\CC^d$ that we will need; for proofs of the following, see \cite{klimek:pluripotential}, chapter 5.

\begin{proposition}   \label{prop:61} 
\begin{enumerate}
\item Let $d=d_1+d_2$ and $z=(z',z'')$ in $\CC^d=\CC^{d_1}\times\CC^{d_2}$.   Suppose $K_1\subset\CC^{d_1}$ and $K_2\subset\CC^{d_2}$ are compact.  Then $$V_{K_1\times K_2}(z',z'') = \max\{V_{K_1}(z'),V_{K_2}(z'')\}.$$
\item 
 Suppose $P=(P_1,\ldots,P_d)\colon\CC^d\to\CC^d$ is a polynomial mapping of degree $k$ with the property that $\hat P^{-1}(0) = \{0\}$, where $\hat P=(\hat P_1,\ldots,\hat P_d)$ and $\hat P_j$ denotes the homogeneous part of $P_j$ of degree $k$.  Then for any compact $K\subset\CC^d$,
$$
V_{P^{-1}(K)}(z) = \tfrac{1}{k}V_K(P(z)).
$$
\item Suppose $K_1\supset K_2\supset\cdots$ is a sequence of compact sets in $\CC^d$ decreasing to a compact set $K=\bigcap_j K_j$.  Then
$V_{K_j}(z)\nearrow V_K(z)$ for all $z\in\CC^d$.  
\item The following are equivalent:\begin{enumerate}[(i)] \item $V_K=V_K^*$;  (ii) $V_K$ is continuous; and (iii) $V_K^*(z)=0$ for all $z\in K$. \end{enumerate} 
\end{enumerate} \qed
\end{proposition}


A set $K$ for which any of the equivalent conditions in part 4 hold is said to be  \emph{regular}.  A real ball and simplex are regular, as can be observed from the explicit formulas of their extremal functions in Section \ref{sec:explicit}.  As a consequence:

\begin{lemma}\label{lem:reg}
A convex body $K\subset\RR^d$ is regular. 
\end{lemma}

\begin{proof}
Let $a\in K$.  Choose $d$ additional points $b_1,\ldots,b_d$ in $K$, in general linear position, so that the convex hull $\text{co}(a,b_1,\ldots,b_d)=:S$ is a $d$-dimensional simplex in $K$.  Then $V_K\leq V_S$, so $V_K^*\leq V_S^*$.  Since $S$ is regular and $a\in S$, $0= V_S^*(a) \geq V_K^*(a)$ by regularity condition (iii).  Hence $V_K^*(a)=0$, and since $a$ was arbitrary, condition (iii)  holds for all points in $K$.  So $K$ is regular.  
\end{proof}

Equation (\ref{eqn:vkdef})  also makes sense for a possibly unbounded set $S$: define  
$$V_S(z):=\sup\{u(z)\colon u\in \calL,\ u\leq 0 \hbox{ on } S\}.$$   
 We are interested in the special case in which $S$ is  a strip. 

\begin{lemma}\label{lem:62}
Let $K\subset\RR^j$ be a compact set and $S=\pi^{-1}(K)\subset\RR^d$, where $\pi:\RR^d\to\RR^j$ ($j<d$) is a linear map onto $\RR^j$.   Then $V_S(z)=V_K\circ\pi(z)$ for all $z\in\CC^d$.
\end{lemma}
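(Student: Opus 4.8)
The statement $V_S = V_K \circ \pi$ for $S = \pi^{-1}(K)$ with $\pi\colon\CC^d\to\CC^j$ a linear surjection should follow by a two-sided inequality, exploiting that both sides are defined as upper envelopes over the Lelong class. After a linear change of coordinates I may assume $\pi$ is the coordinate projection $(z_1,\ldots,z_d)\mapsto(z_1,\ldots,z_j)$, so that $S = K\times\CC^{d-j}$. Note a linear isomorphism of $\CC^d$ preserves $\calL$ (logarithmic growth is invariant under invertible linear maps), so this reduction is harmless; after it, $\pi$ is literally the projection and the claim becomes $V_{K\times\CC^{d-j}}(z,w) = V_K(z)$.

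\smallskip
\noindent\textbf{Easy inequality.} For ``$\leq$'': if $u\in\calL(\CC^d)$ with $u\leq 0$ on $S = K\times\CC^{d-j}$, I want to show $u(z,w)\leq V_K(z)$ for all $(z,w)$. Fix $z_0\in\CC^j$; the function $w\mapsto u(z_0,w)$ is psh on $\CC^{d-j}$ and, by the logarithmic growth of $u$, is bounded above by $\log^+|(z_0,w)| + C \leq \log^+|w| + C'$ near infinity in $w$ with $C'$ depending on $z_0$; actually a cleaner route is: for fixed $w_0$, the slice $z\mapsto u(z,w_0)$ need not be in $\calL(\CC^j)$ uniformly, so instead I use that $w\mapsto u(z_0,w)$ is psh and subharmonic of at most logarithmic growth in each variable, hence (being bounded above by an affine-log function and psh) must be constant in $w$ — this is the standard Liouville-type fact that a psh function on $\CC^{d-j}$ with $u(w)\leq \log|w| + C$ is constant only if... — more carefully, I should argue that for each fixed $z_0$, $\sup_w u(z_0,w)$ is attained or approached and the function $\tilde u(z) := \limsup_{w\to z, \text{all } w}$... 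The robust version: define $g(z) := \sup_{w\in\CC^{d-j}} u(z,w)$; one checks $g\in\calL(\CC^j)$ (upper semicontinuous regularization is psh, growth is inherited since $\log^+|(z,w)|\leq \log^+|z| + \log^+|w|$ won't directly work — rather use that on the set $|w|\leq |z|$ the bound is $\log^+|z|+C+\log 2$, and for $|w|>|z|$ one uses the maximum principle in $w$ on balls to push the sup back). Then $g\leq 0$ on $K$, so $g\leq V_K$, giving $u\leq V_K\circ\pi$.

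\smallskip
\noindent\textbf{Harder inequality.} For ``$\geq$'': given $u\in\calL(\CC^j)$ with $u\leq 0$ on $K$, the function $(z,w)\mapsto u(z)$ lies in $\calL(\CC^d)$ (its growth is $\leq \log^+|z| + C \leq \log^+|(z,w)| + C$) and vanishes $\leq 0$ on $S$, so $u(z) \leq V_S(z,w)$; taking the sup over such $u$ gives $V_K(z)\leq V_S(z,w)$. This direction is immediate. The main obstacle is therefore entirely in the ``$\leq$'' direction: showing that a member of $\calL(\CC^d)$ vanishing on $K\times\CC^{d-j}$ cannot grow in the $w$-directions, i.e. producing the dominating function $g\in\calL(\CC^j)$. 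The clean way to handle this is to invoke Proposition \ref{prop:61}(1) with $K_1 = K$ and $K_2 = \overline{\mathbb{B}}_R \subset \CC^{d-j}$ a closed ball of radius $R$: then $V_{K\times\overline{\mathbb B}_R}(z,w) = \max\{V_K(z), V_{\overline{\mathbb B}_R}(w)\}$, and since $K\times\overline{\mathbb B}_R \subset S$ we get $V_S \leq V_{K\times\overline{\mathbb B}_R}$; letting $R\to\infty$, $V_{\overline{\mathbb B}_R}(w) = \log^+(|w|/R)\to 0$ pointwise, so $V_S(z,w)\leq V_K(z)$. Combined with the reverse inequality already shown, this gives equality. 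I expect the only subtlety is confirming $V_S$ (defined for the unbounded set $S$) still satisfies monotonicity $V_{S}\leq V_{S'}$ for $S\supseteq S'$, which is immediate from the envelope definition, and that $V_K\circ\pi$ is itself a competitor in the envelope defining $V_S$ — both routine.
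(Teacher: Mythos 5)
Your final argument — reduce to the coordinate projection via a linear change of variables, check that $V_K\circ\pi\in\calL(\CC^d)$ and vanishes on $S$ (giving $V_K\circ\pi\leq V_S$), then use $K\times B_R\subset S$ together with the product formula of Proposition~\ref{prop:61}(1) and let $R\to\infty$ so that the $B_R$-term drops out, giving $V_S\leq V_K\circ\pi$ — is exactly the paper's proof of this lemma. The long, hedged digression in your ``Easy inequality'' paragraph about $g(z)=\sup_w u(z,w)$ and Liouville-type reasoning should simply be deleted (and note you have the ``easy''/``harder'' labels swapped: $V_S\leq V_K\circ\pi$ is the nontrivial direction); the clean ball argument you settle on at the end is correct and is all that is needed.
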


\begin{proof}
If $z\in S$ then clearly $V_S(z)=V_K(\pi(z))=0$.  Also, as $|z|\to\infty$, 
$$
V_K(\pi(z)) - \log|z| = (V_{K}(\pi(z))-\log|\pi(z)|) + (\log|\pi(z)|-\log|z|) = O(1) + O(1), 
$$
so $V_K\circ\pi\in\calL$ and therefore $V_K\circ\pi\leq V_S$.  

It remains to show that $V_K\circ\pi\geq V_S$ for $z\not\in S$.  First, composing with a linear map and using Proposition \ref{prop:61}(2), we can assume that 
$$\pi(z) = \pi(z_1,\ldots,z_d) = (z_1,\ldots,z_j)=:z'$$ and $S=\pi^{-1}(K)= K\times\RR^{d-j}$ for some  compact  $K\subset\RR^j$.   Write $z''=(z_{j+1},\ldots,z_d)$ for the remaining coordinates, so $z=(z',z'')$. 

  Let $z\not\in S$. Then $z\not\in K\times B_R$ for any $R>0$, where 
$B_R=\{z''\in\RR^{d-j}\colon |z''|\leq R\}$.  Also, 
\begin{equation}\label{eqn:R}
V_S(z) = V_{K\times\RR^{d-j}}(z) \leq V_{K\times B_R}(z) = \max\{ V_K(z'),V_{B_R}(z'') \} \hbox{ for any }R>0
\end{equation}
by Proposition \ref{prop:61}(1).  

Using the linear map $z''\mapsto z''/R$, and applying Proposition \ref{prop:61}(2),    we have 
$$V_{B_R}(z'')=V_{B_1}(z''/R) \longrightarrow V_{B_1}(0)=0  \hbox{ as } R\to\infty,  $$
by continuity of $V_{B_1}$ and the fact that $0\in B_1$.  Taking the limit as $R\to\infty$ in (\ref{eqn:R}), 
$$
V_S(z)\leq \lim_{R\to\infty} \max\{V_K(z'),V_{B_R}(z'')\} = \max\{V_K(z'),0\} = V_K(z') = V_K(\pi(z)),
$$
which completes the proof. 
\end{proof}

\section{Extremal functions for a real ball and simplex}\label{sec:explicit}

Let $S=\co(p_0,\ldots,p_d)$ be a simplex in $\RR^d$, where the points $p_j$ are in general linear position.  
Associated to $S$ are \emph{barycentric coordinates} which are given by
$$S \ni  z\longmapsto \lambda=(\lambda_0,\ldots,\lambda_d) \in \RR^{d+1}  
$$
such that for any point $z\in S$, the components $\lambda_0=\lambda_0(z),\ldots,\lambda_d=\lambda_d(z)$ satisfy 
\begin{eqnarray}
\lambda_j\in[0,1] &\ & \hbox{for all }j=0,\ldots,d,   \nonumber  \\
(\lambda_0+\cdots+\lambda_d)   z &=& \lambda_0p_0+\cdots+\lambda_dp_d,    \label{eqn:61}  \\  
\lambda_0+\cdots+\lambda_d &=& 1 .   \label{eqn:62}
\end{eqnarray} 
Barycentric coordinates may be extended outside $S$; here $\lambda_j$ may not necessarily be in $[0,1]$ but the barycentric coordinates of  $z\in \CC^d$ may be found  by solving the linear system of equations (\ref{eqn:61}), 
(\ref{eqn:62}) for $\lambda_0,\ldots,\lambda_d$. 

\begin{remark} \rm 
Observe that $\lambda_0,\ldots,\lambda_d$ are linear in $z$, and when $z=p_j$, setting $\lambda_j=1$ and $\lambda_k=0$ ($k\neq j$)  solves the system (\ref{eqn:61}), (\ref{eqn:62}).  In addition, the polynomials of degree at most $1$ in $\CC^d$ form a space of dimension $d+1$.  It follows from this that as functions of $z$, $\lambda_j(z)$ must be the \emph{fundamental Lagrange interpolating polynomials} for $\{p_0,\ldots,p_d\}$, i.e., \def\VDM{\mathrm{VDM}}
$$
\lambda_j(z) = \frac{\VDM(p_0,\ldots,p_{j-1},z,p_{j+1},\ldots,p_d)}{\VDM(p_0,\ldots,p_d)},$$
 where for $\{b_0,\ldots,b_d\}\subset\CC^d$, 
 $$\VDM(b_0,\ldots,b_d) 
= \det\begin{bmatrix}
1& 1&\cdots & 1 \\
z_1(b_0)& z_1(b_1) & \cdots & z_1(b_d)\\
\vdots & \vdots & \ddots & \vdots\\
z_d(b_0)& z_d(b_1)& \cdots & z_d(b_d)
\end{bmatrix} 
$$
and $z_j(b_k)$ denotes the $j$-th coordinate of $b_k$.
\end{remark}

Let $h:\CC\setminus[-1,1]\to\CC\setminus\Delta$ denote the \emph{inverse Joukowski function},
$$
h(\eta) = \eta +\sqrt{\eta^2-1},
$$
where we take a branch of the square root that is postive on the positive real axis.  Recall that the \emph{Joukowski function}  is defined by $\zeta\mapsto \frac{1}{2}\bigl(\zeta+\tfrac{1}{\zeta}\bigr)$.  We have
\begin{equation}\label{eqn:hj}
h\left( \tfrac{1}{2}\bigl(\zeta + \tfrac{1}{\zeta}\bigr) \right)  = \zeta \hbox{ for all } \zeta\in\CC\setminus\Delta \end{equation}
and
\begin{equation}\label{eqn:hj2}
|h(\eta)| = h\left( \tfrac{1}{2}(|\eta+1| + |\eta-1|)\right).
\end{equation}
From the above equation, the level sets of $|h|$ are ellipses with foci at $\pm 1$.

\medskip

The following formula for $V_S$ was proved in \cite{bosmauwaldron:extremal}.

\begin{theorem}\label{thm:Vsimplex}
Given $z\in\CC^d$, let $\lambda_j=\lambda_j(z)$ (where $j=0,\ldots,d$) be the barycentric coordinates of $z$ as defined above.  Then
$$ V_{S}(z) = \log h(|\lambda_0(z)| + \cdots |\lambda_d(z)|) .$$\qed 
\end{theorem}

\begin{remark}\rm 
Let $u(z):= \log h(|\lambda_0(z)| + \cdots |\lambda_d(z)|)$.  By the properties of barycentric coordinates on $S$, we have $u(z)=0$ for all $z\in S$.  In \cite{bosmauwaldron:extremal} it is verified explicitly that  for all $z\in\CC^d\setminus S$, the matrix $\left[ \frac{\partial^2u}{\partial z_j\partial \bar z_k}(z) \right]_{j,k=1}^d$ is positive semidefinite of rank strictly less than $d$.  Thus  $u$ is a maximal psh function on $\CC^d\setminus S$, so the support of $(dd^cu)^d$ is contained in $S$.    A version of the global domination principle \cite{bedfordtaylor:plurisubharmonic} says that if $v$ is another  function in $\calL$ and $v\leq u$ on the support of $(dd^cu)^d$ (equivalently, where $u$ is not maximal), then $v\leq u$ in $\CC^d$.  Hence $u$ dominates any function in $\calL$ that is $\leq 0$ on $S$, i.e.,  any competitor for $V_S$.  So $u=V_S$.  
\end{remark}

\subsection*{Baran and Lundin formulas}

Let $\Sigma:=\co(\{\b0,\be_1,\ldots,\be_d\})$ denote the standard $d$-dimensional simplex in $\RR^d$, also given in terms of linear functions as
\begin{equation}\label{eqn:simplexdef}
\Sigma = \{x\in\RR^d\colon x_1\geq 0,\ldots,x_d\geq 0,\  x_1+\cdots+x_d\leq 1\}.
\end{equation}

  For $z=(z_1,\ldots,z_d)\in\CC^d$, we have 
$$
\lambda_1(z)=z_1,\ldots,\lambda_d(z)=z_d,  \hbox{ and } \lambda_0(z) = 1-z_1-z_2-\cdots - z_d, 
$$
which yields Baran's formula for the extremal function of the standard simplex,  
\begin{equation}\label{eqn:vs}
 V_{\Sigma}(z) = \log h(|z_1|+\cdots+|z_d|+|1-z_1-\cdots - z_d|) .
\end{equation}
  Using the square map $z=(z_1,\ldots,z_d)\mapsto (z_1^2,\ldots,z_d^2)=: Q(z)$ we obtain Lundin's formula for the extremal function of the real unit ball
$
B_{\RR} := B\cap\RR^d,
$   
where $B=\{z\in\CC^d: |z|\leq 1\}$ is the unit ball in $\CC^d$.  

\begin{corollary}[Lundin's formula]  \label{cor:lundin}
We have $$V_{B_{\RR}}(z) = \tfrac{1}{2}\log h(|z_1|^2+\cdots+|z_d|^2+|z_1^2+\cdots+z_d^2-1|).$$ 
\end{corollary}

\begin{proof}
From (\ref{eqn:simplexdef}) we obtain $$Q^{-1}(\Sigma) =\{x\in \RR^d\colon x_1^2\geq 0,\ldots,x_d^2\geq 0,\ x_1^2+\cdots + x_d^2\leq 1\},$$  which is clearly  the  real unit ball $B_{\RR}$.  Since $Q$ is a polynomial map of degree 2 with $Q^{-1}(0)=\{0\}$, 
\begin{equation}\label{eqn:VQ}
V_{B_{\RR}}(z) =  V_{Q^{-1}(\Sigma)}(z) = \tfrac{1}{2} V_{\Sigma}(Q(z)) \end{equation}
by Proposition \ref{prop:61}(2).  The result follows by plugging formula (\ref{eqn:vs}) into the right-hand side. \end{proof}

\subsection*{Robin functions}

Recall that the \emph{Robin function $\rho_K$} associated to a compact set $K\subset\CC^d$ is defined by 
$$
\rho_K(z)  = \limsup_{|\lambda|\to\infty}( V_K(\lambda z) - \log|\lambda|),
$$
and is logarithmically homogeneous: $\rho_K(\lambda z) = \rho_K(z) + \log|\lambda|$ for all $\lambda\in\CC^*=\CC\setminus\{0\}$.    
The \emph{Robin indicatrix} of $K$ is the set
$$
K_{\rho}:= \{z\in\CC^d\colon \rho_K(z)\leq 0\},
$$  
and is a polynomially convex set that is \emph{circled}: 
$$z\in K_{\rho}  \Longleftrightarrow e^{i\theta}z\in K_{\rho}, \quad\theta\in\RR.$$  
We will denote the Robin indicatrix of $B_{\RR}$ by $B_{\RR,\rho}$.

\begin{lemma} \label{lem:14b}
We have
$$
\rho_{B_{\RR}}(z) = \tfrac{1}{2} \log\left(2(|z_1|^2+\cdots+|z_d|^2+|z_1^2+\cdots+z_d^2|) \right).
$$
Therefore, $B_{\RR,\rho} = \{z\in\CC^d\colon |z_1|^2+\cdots+|z_d|^2+|z_1^2+\cdots+z_d^2|\leq\tfrac{1}{2}    \}$.
\end{lemma}

\begin{proof}
 For large values of  $|z|$, 
\[ \begin{aligned}
h(|z_1|^2+\cdots+|z_d|^2+ |z_1^2+\cdots z_d^2-1|) \ &= \  h(|z_1|^2+\cdots+|z_d^2|+|z_1^2+\cdots+z_d^2|) \\
& \hskip45mm + O(1)  \\
&= 2(|z_1|^2+\cdots+|z_d^2|+|z_1^2+\cdots+z_d^2|) \\
& \hskip45mm + O(1).
\end{aligned} \]
Hence 
\[ \begin{aligned}  V_{B_{\RR}}(z) &= \tfrac{1}{2}\log h(|z_1|^2+\cdots+|z_d|^2+ |z_1^2+\cdots z_d^2-1|) \\ &= \tfrac{1}{2} \log 2(|z_1|^2+\cdots+|z_d^2|+|z_1^2+\cdots+z_d^2|)    
+ O(\tfrac{1}{|z|}).  \end{aligned} \]
(Details of the `big-O' calculations are left to the reader.)  Finally,
$$
V_{B_{\RR}}(\lambda z) -\log|\lambda| = \tfrac{1}{2}\log 2(|z_1|^2+\cdots+|z_d^2|+|z_1^2+\cdots+z_d^2|)    
+ O(\tfrac{1}{|\lambda z|}).
$$
Letting $|\lambda|\to\infty$  yields the result. 
\end{proof}

Using (\ref{eqn:vs}), a similar calculation (omitted) shows that for the standard simplex $\Sigma$ in $\RR^d$, 
$$
\rho_{\Sigma}(z) = \log 2(|z_1|+\cdots+|z_d|+|z_1+\cdots+z_d|)$$
and therefore $$\Sigma_{\rho}= \{z\in\CC^d: |z_1|+\cdots+|z_d|+|z_1+\cdots+z_d|\leq\tfrac{1}{2} \}.$$

\section{Hooke ellipses}\label{sec:hooke}


\begin{definition}\rm
Let $K\subset\RR^d$ be a convex body and let $E\subset K$ be an ellipse.\footnote{The definition that follows will also extend to \emph{degenerate} ellipses; see Remark \ref{rem:95} below.}  Then $E$ is \emph{extremal for $K$} or  \emph{inscribed in $K$} if there is no translation that takes $E$ into the interior of $K$.
\end{definition}

\begin{remark}\label{rem:inscribed}   \rm
Inscribed ellipses are extremal in the following sense.  The largest ellipse contained in $K$ for a fixed eccentricity and orientation must be inscribed in $K$.  Otherwise, one could translate the ellipse into the interior of $K$ and expand it about the center to get a larger ellipse still contained in $K$ with the same eccentricity and orientation. 
\end{remark}

\begin{definition}\rm 
Let us call an ellipse $E$ in $\RR^d$ a \emph{Hooke ellipse} if it is centered at the origin.  In addition, if $E$ is inscribed in $B_{\RR}$ we will call it a \emph{Hooke ellipse for $B_{\RR}$}.  
 
More generally, let $K\subset\RR^d$ is a compact convex body that is \emph{symmetric (with respect to the origin)}, i.e., $x\in K\iff -x\in K$.  Then $E$ is a Hooke ellipse for $K$ if it is centered at the origin and inscribed in $K$.
\end{definition}

The complexification in $\CC^d$ of a real ellipse $E$ is denoted  $E_{\CC}$.   Using Lundin's formula, we can show that $V_{B_{\RR}}$ is harmonic 
on complexifications of Hooke ellipses for $B_{\RR}$.  Recall the standard notation $\CC^*=\CC\setminus\{0\}$ in what follows.

\begin{proposition}\label{prop:16}
 Let $E$ be a  Hooke ellipse for $B_{\RR}$.  There is a parametrization 
\begin{equation}\label{eqn:prop16}
\CC^* \ni \zeta \stackrel{f}{\longmapsto} c\zeta + \bar c/\zeta = z \in E_{\CC} 
\end{equation}
such that  $E=f(\partial\Delta)$,   $V_{B_{\RR}}(z) = V_{B_{\RR}}(f(\zeta))=   \bigl| \log|\zeta| \bigr|$, and $c\in \partial B_{\RR,\rho}$.  In particular, the restriction of $V_{B_{\RR}}$ to $E_{\CC}$ is harmonic on $E_{\CC}\setminus B_{\RR}$.  
\end{proposition}

\begin{proof}
For an explicit calculation, consider an ellipse $E$ whose major and minor axes are along the $\Re(z_1)$ and $\Re(z_2)$ coordinate axes respectively; then $E_{\CC}$ is given by the equations
$$
z_1^2+ \frac{z_2^2}{b^2}=1 \hbox{ where } b\in(0,1], \hbox{ and } z_j=0 \hbox{ for all } j>2.
$$
By Euler's formula, the trigonometric parametrization $z_1=\cos\theta$, $z_2=b\sin\theta$ of the real ellipse $E$ on the unit circle $\{\zeta = e^{i\theta}\colon \theta\in\RR\}$ yields the parametrization 
$$z_1 = \tfrac{1}{2}(\zeta+\tfrac{1}{\zeta}),\ z_2 = \tfrac{-ib}{2}(\zeta-\tfrac{1}{\zeta}),\quad \zeta\neq 0$$
of $E_{\CC}$.  In the  notation of (\ref{eqn:prop16}), $c=(\tfrac{1}{2}, -\tfrac{ib}{2},0,\ldots,0)$.  

We have
$|z_1|^2 = \tfrac{1}{4} | \zeta + \tfrac{1}{\zeta}|^2$ and $|z_2|^2 = \tfrac{b^2}{4}|\zeta - \tfrac{1}{\zeta} |^2$,  and  a computation gives 
$$
 z_1^2+z_2^2-1  =  \frac{1-b^2}{4}\left( \zeta-\frac{1}{\zeta}\right)^2,
$$
so that 
\[ \begin{aligned}
|z_1|^2 + |z_2|^2+|z_1^2+z_2^2-1|  &= \frac{1}{4}\left( \bigl|\zeta+\frac{1}{\zeta}\bigr|^2 + \bigl|\zeta - \frac{1}{\zeta}\bigr|^2 \right) \\
&= \frac{1}{2} \left( |\zeta|^2 + \frac{1}{|\zeta|^2}\right).
\end{aligned}\]
If $|\zeta|>1$ then 
\[\begin{aligned}
V_{B_{\RR}}(z) =  \log h(|z_1|^2 + |z_2|^2+|z_1^2+z_2^2-1|) & = \tfrac{1}{2} \log h\left( \tfrac{1}{2} \bigl( |\zeta|^2 + \tfrac{1}{|\zeta|^2}\bigr) \right) \\
&= \log|\zeta|,  \end{aligned}  \]
by (\ref{eqn:hj}).  Similarly, if $|\zeta|<1$ then $\tfrac{1}{|\zeta|}>1$ and $V_{B_{\RR}}(z) = \log \tfrac{1}{|\zeta|}$.  Putting the two formulas for $|\zeta|>1$ and $|\zeta|<1$ together,  and using the continuity of $V_{B_{\RR}}$ across $B_{\RR}$, we have 
$V_{B_{\RR}}(z) = \bigl|\log|\zeta|\bigr|$.  Thus the result holds for Hooke ellipses in the $(z_1,z_2)$-plane oriented along the coordinate axes. 

Observe that a real rotation $z\mapsto Rz$ is a linear map that leaves $B_{\RR}$ invariant, so $V_{B_{\RR}}(Rz)=V_{B_{\RR}}(z)$.  Let $E$ be a Hooke ellipse, and let   $R_{E}$ denote the real rotation  whose inverse takes $E$ to the Hooke ellipse oriented along the $z_1$ and $z_2$ axes with the same eccentricity.   A parametrization of $E_{\CC}$ is given by 
$$\zeta \stackrel{f}{\longmapsto} R_E\left( c_E\zeta + \bar c_E/\zeta \right)  \ = \ c\zeta + \bar c/\zeta $$
where $c_E=(\tfrac{1}{2},-\tfrac{ib}{2},0,\ldots,0)$ for some $b\in(0,1]$  and   $c=R_Ec_E$.  So we have a parametrization of the desired form, and  $V_{B_{\RR}}(f(\zeta))=\bigl|\log|\zeta|\bigr|$ by invariance. 

 It remains to check that $c\in \partial B_{\RR,\rho}$.  This follows from a calculation on $E_{\CC}$: for $|\zeta|>1$,  
\[ \begin{aligned}
0 = V_{B_{\RR}}(c\zeta + \bar c/\zeta) - \log|\zeta| &=  V_{B_{\RR}}(c\zeta + O(\tfrac{1}{|\zeta|})) - \log|\zeta| \\
&= V_{B_{\RR}}(c\zeta)+ O(\tfrac{1}{|\zeta|^2}) - \log|\zeta|, 
\end{aligned} \]
 where the  `big-O' calculations are similar to those done in Lemma \ref{lem:14b}.  Taking the limit as $|\zeta|\to\infty$, we obtain $\rho_{B_{\RR}}(c)=0$.  So $c\in\partial B_{\RR,\rho}$.  
\end{proof}

\begin{remark}\label{rem:95}  \rm
For any nonzero $c\in\CC^d$, parameters over the unit circle ($\zeta=e^{i\theta}$) plugged into  (\ref{eqn:prop16})  yield the real ellipse 
$$
E_c=\{2\Re(c)\cos\theta +2\Im(c)\sin\theta\colon \theta\in\RR\}.
$$
Hence when $c\in\RR^d$, we obtain the closed line segment joining $2c$ to $-2c$.  We consider this to be a \emph{degenerate} ellipse.  In this case, (\ref{eqn:prop16}) gives a parametrization of the Joukowski map in the complex line $L:=\{\lambda c\colon  \lambda\in\CC\}$.  The corresponding `complexified ellipse' is then this complex line $L$. The parametrization is a $2$-to-$1$ map from $\CC^*$ onto $L$.    

A real rotation sends $c$ to  $(\frac{1}{2},0,\ldots,0)$ with the image of the unit circle being the interval $[-1,1]$ in the $z_1$-plane.  The previous proposition holds for degenerate ellipses too, because the calculations in the proof remain valid when $b=0$.  In what follows, our arguments will apply to both degenerate and non-degenerate  ellipses unless they need to be treated separately.

The statement that a degenerate ellipse is inscribed in a convex set $K$ means that the endpoints of the line segment intersect $\partial K$.
\end{remark}

\begin{notation}\rm Given $c\in\partial B_{\RR,\rho}$, let us denote by $E_c$ and $E_{c,\CC}$ the real and complexified ellipses associated with the parametrization (\ref{eqn:prop16}). In general, we will use `$\CC$' as a subscript to denote complexification.
\end{notation}

It is easy to observe that the complex  span of $\{c,\bar c\}$ contains $E_{c,\CC}$ and the real span of $\{\Re(c),\Im(c)\}$ contains $E_c$.

\begin{proposition} \label{prop:18}
 Any $c\in \partial B_{\RR,\rho}$ defines a Hooke ellipse via the parametrization (\ref{eqn:prop16}).  The set $\partial B_{\RR,\rho}/\sim$ is a parameter space for Hooke ellipses for $B_{\RR}$, where $c'\sim c$ if $c'=\bar c$ or $c'= ce^{i\phi}$ for some $\phi\in\RR$.
\end{proposition}

\begin{proof}
Given $c\in\CC^d\setminus\{0\}$, let $E_c$ be the real ellipse parametrized by the unit circle, $e^{i\theta}\mapsto ce^{i\theta}+\bar ce^{-i\theta}$, and let$E_{c,\CC}$ be its complexification,  given by the parametrization (\ref{eqn:prop16}).  We claim that 
\begin{equation}\label{eqn:Ec}
c\in B_{\RR,\rho} \iff E_c\subset B_{\RR}.
\end{equation}
If $E_c\subset B_{\RR}$  then by the calculation in the previous proof, $u(\zeta):= V_{B_{\RR}}(c\zeta+\bar c/\zeta) -\log|\zeta|$ is a bounded subharmonic function on $\CC\setminus\Delta$ that goes to zero as $|\zeta|\to 1$ and goes to $\rho_{B_{\RR}}(c)$ as $|\zeta|\to\infty$.  Hence by the maximum principle (for the complement of the complex unit disk), $\rho_{B_{\RR}}(c)\leq 0$, so $c\in B_{\RR,\rho}$.    

Conversely, suppose $E_c$ contains points outside $B_{\RR}$.  Then $E_c$ is inscribed in a larger ball $\alpha B_{\RR}$ for some $\alpha>1$; rescaling, $E_{c/\alpha}$ is a Hooke ellipse.  Hence by the final calculation in the previous proof,
$$
0=\rho_{B_{\RR}}(c/\alpha) = \rho_{B_{\RR}}(c) -\log|\alpha|
$$
so $\rho_{B_{\RR}}(c)=\log|\alpha|>0$, i.e., $c\not\in B_{\RR,\rho}$.  This proves the claim.

We can see that the Hooke ellipses are given by $\partial B_{\RR,\rho}$ as follows.  If $c\in\intt(B_{\RR,\rho})$ then $\alpha c\in B_{\RR,\rho}$ also, so $E_{\alpha c}\subset B_{\RR}$ for $\alpha>1$ sufficiently close to 1.  One can make a sufficiently small translation of $E_{c}$ along a direction in the plane spanned by its axes, so that the translated ellipse remains in the planar region bounded by $E_{\alpha c}$.  Hence $E_c$ is not inscribed in $B_{\RR}$, i.e., is not a Hooke ellipse.

If $c'=e^{i\phi}c$ then 
$$
E_{c'} = \{ c'e^{i\theta}+\overline{c'}e^{-i\theta}\colon \theta\in\RR\} = \{ce^{i(\phi+\theta)} + \bar{c}e^{-i(\phi+\theta)}\colon \theta\in\RR\} = E_c.
$$
 Another calculation shows that $c'=\bar c$ also implies $E_{c'}=E_c$.  So $c'\sim c$ implies $E_{c'}=E_c$ and the last statement follows.
\end{proof}

One can generalize the previous propositions to a solid (i.e., filled-in) ellipsoid.  Note that any solid ellipsoid $A$ centered at the origin is the image of $B_{\RR}$ under a nonsingular linear map.  As with the ball, a Hooke ellipse for $A$ is defined to be an ellipse inscribed in $A$.  A straightforward application of Proposition \ref{prop:61}(2) yields the following.

\begin{corollary}
Let $A$ be a solid ellipsoid centered at the origin, and let $L$ be the nonsingular linear map for which $A=L(B_{\RR})$.  Any Hooke ellipse for $A$ must be an ellipse $E_c$ whose complexification $E_{c,\CC}$ has a  parametrization as in (\ref{eqn:prop16}) with $E_c=f(\partial\Delta)$, $V_{A}(z)=V_{A}(f(\zeta))=\left|\log|\zeta|\right|$, and $c\in\partial A_{\rho}$.  Moreover, $A_{\rho}=L(B_{\RR,\rho})$ and the ellipse $E_{\tilde c}$ (where $\tilde c=L^{-1}(c)$)  is a Hooke ellipse for $B_{\RR}$.  \qed
\end{corollary}

Hooke ellipses satisfy the following `identity principle'.

\begin{theorem}\label{prop:19}
Let $K_1,K_2$ be compact sets, and let $E_1\subset K_1,E_2\subset K_2$ be Hooke ellipses whose complexifications $E_{1,\CC}, E_{2,\CC}$ are parametrized  as in (\ref{eqn:prop16}) by maps $f_1,f_2$.  Suppose  
$$V_{K_1}(f_1(\zeta))=\log|\zeta|,\ V_{K_2}(f_2(\zeta))=\log|\zeta|,\quad \hbox{for all } \zeta\in\CC\setminus\Delta. $$
Further, suppose $z_0\in E_{1,\CC}\cap E_{2,\CC}$ and $V_{K_1}(z_0)=V_{K_2}(z_0)>0$.  Then $E_{1,\CC}=E_{2,\CC}$.  
\end{theorem}

In the proof of the theorem we will need to relate the extremal function on $\CC^d$ to the extremal function on a subspace.  We state a result that follows from results of Sadullaev (\cite{sadullaev:estimate}; see also Theorem 2.2 of \cite{hartmau:chebyshev}).  If $K\subset A\subset\CC^d$ where $A$ is an algebraic variety and $K$ is compact, write  $V_{A,K}$ for the extremal function given \emph{intrinsically} on the manifold of regular points of $A$, defined as the upper envelope of plurisubharmonic functions on the manifold of logarithmic growth at infinity.
\begin{theorem}\label{thm:sadullaev}
Let $A\subset\CC^d$ be an algebraic variety of pure dimension $<d$ and let $K\subset A$ be a compact subset such that $V_{A,K}(z)<\infty$.   Then $V_{K}(z)=V_{A,K}(z)$ for all $z\in A$. \qed
\end{theorem}

\begin{remark}\rm
Recall $V_K$ is defined on all of $\CC^d$.  From general pluripotential theory and the fact that $A$ is pluripolar, $V_K^*(z)\equiv+\infty$  (\cite{klimek:pluripotential}, Chapter 5).    \end{remark}

\begin{proof}[Proof of Theorem \ref{prop:19}]
First, $z_0=f_1(\zeta_1)=f_2(\zeta_2)$ for some $\zeta_1,\zeta_2\in\CC\setminus\Delta$, and  
\begin{equation}\label{eqn:VAj}
\log|\zeta_1|= V_{K_1}(f(\zeta_1))=  V_{K_1}(z_0)= V_{K_2}(z_0)=    V_{K_2}(f(\zeta_2)) = \log|\zeta_2|.\end{equation}

If $E_1,  E_2$ are degenerate then the complexifications $E_{1,\CC},  E_{2,\CC}$ are complex lines through the points $0$ and $z_0$.  Hence they must be the same line, i.e., $E_{1,\CC}=E_{2,\CC}$.  For $E_{1,\CC}\neq E_{2,\CC}$ to hold, at least one ellipse must be non-degenerate.  To rule this out, assume $E_1$ is non-degenerate, and denote by $P_1$ the 2-dimensional plane containing $E_1$.  Its complexification $P_{1,\CC}$ then contains $E_{1,\CC}$.  We consider separately the cases in which the real intersections are empty or nonempty.

\begin{description}
\item[{Case 1:} $E_1\cap E_2=\emptyset$.]\ \\
Suppose the other ellipse $E_2$ is degenerate.  Then $E_{2,\CC}$ is the complex line passing through the points $\{0,z\}$, which are contained in $P_{1,\CC}$.   Hence $E_{2,\CC}\subset P_{1,\CC}$.  Looking at the real points,  it follows that $E_2$ is a line segment contained in $P_1$.  

Let  $A:=P_{1,\CC}$ in what follows.   Clearly $E_2$ must be contained in the interior of $U:=\text{co}(E_1)$ since it cannot intersect the boundary $\partial U= E_1$.  Then $E_2$ is a Hooke ellipse for the set $U_{\delta}:=(1-\delta)U$, for some $\delta>0$ chosen to shrink $U$ so that $E_2$ meets $\partial U_{\delta}$.  Then 
$$
\log|\zeta_2|= V_{A,U_{\delta}}(f(\zeta_2)) = V_{A,U_{\delta}}(z_0).
$$
On the other hand, 
$$V_{A,U_{\delta}}(z_0)  >V_{A,U}(z_0)\geq V_{A,K_1\cap A}(z_0)= V_{K_1\cap A}(z_0)\geq V_{K_1}(z_0)= \log|\zeta_1|,$$
where we use Theorem \ref{thm:sadullaev} to equate the third and fourth terms. We get a  contradiction to (\ref{eqn:VAj}).  

Suppose now that $E_2$ is non-degenerate and lies in the plane $P_1$.  Then since both ellipses are  symmetric about the origin in that plane and do not intersect, one ellipse must be contained in the interior of the convex hull of the other; say $E_2$ is in the interior of $\text{co}(E_1)$, as above.  Applying the same argument as in the previous paragraph yields a contradiction.

Finally, suppose $E_2$ is nondegenerate and lies in a plane $P_2$ with $P_1\neq P_2$. Then $P_1\cap P_2=:L$ is a line through the origin.  Also, 
$$E_{1,\CC}\cap E_{2,\CC}\subset P_{1,\CC}\cap P_{2,\CC}= L_{\CC} \ \Longrightarrow \  E_{1,\CC}\cap E_{2,\CC}\subseteq E_{1,\CC}\cap L_{\CC}.$$ 
 By B\'ezout's Theorem, $E_{1,\CC}\cap L_{\CC}$ consists of at most two points.  By elementary geometry in the plane, the intersection $E_1\cap L$  (of a line through the origin with an ellipse centered at the origin) is exactly two real points.   Therefore these points are the entire intersection $E_{1,\CC}\cap L_{\CC}$.  However, $z_0\in E_{1,\CC}\cap E_{2,\CC}$, and is not real, which gives a contradiction. 

\item[{Case 2:} $E_1\cap E_2\neq \emptyset$.]\ \\
 Let $a\in E_1\cap E_2$.  By symmetry about the origin, $-a\in E_1\cap E_2$ also.  

If $E_2$ is degenerate then $E_{2,\CC}$ is a complex line and 
$E_{1,\CC}\cap E_{2,\CC}\supseteq \{a, -a, z_0\}$ which is 3 points. By B\'ezout's Theorem, $E_{1,\CC}\cap E_{2,\CC}$ can be at most 2 points (counting multiplicity), so we have a contradiction.

Hence $E_2$ must also be non-degenerate, and contained in a 2-dimensional plane $P_2$. If $P_2\neq P_1$ then $P_{2,\CC}\cap P_{1,\CC}$ is a complex line $L_{\CC}$ containing $E_{1,\CC}\cap E_{2,\CC}$, and in particular, the points $\{a,-a,z_0\}$.  But then these 3 points are contained in the set $L_{\CC}\cap E_{1,\CC}$ which can have at most 2 points, and again we get a contradiction.

So $E_2$ is non-degenerate and lies in $P_1$. We restrict to this plane in what follows. 

If $a\in E_1\cap E_2$ is an intersection point of multiplicity 2, then by symmetry,  $-a$ is also an intersection point of multiplicity 2.  Then the intersection $E_{1,\CC}\cap E_{2,\CC}$ has multiplicity at least $5$---twice at each of $\pm a$ and at least once at $z_0$.  But by B\'ezout's theorem, two complex ellipses cannot intersect with multiplicity greater than 4 unless they are equal.  So $E_{1,\CC}=E_{2,\CC}$.

If $a\in E_1\cap E_2$ is an intersection point of multiplicity 1, then the curves meet transversally at $a$.  Fixing an orientation (or parametrization) of $E_1$, we may assume that $E_1$ \emph{enters} the convex hull  $\text{co}(E_2)$, i.e., the filled-in ellipse bounded by $E_2$, at $a$.  By symmetry, $E_1$ also enters $\text{co}(E_2)$ at $-a$.  Hence along $E_1$, between $a$ and $-a$, there is a point $b\in E_1\cap E_2$ where $E_1$ \emph{exits} $\text{co}(E_2)$.  By symmetry, $-b\in E_1\cap E_2$ also.  Hence $\{a,-a,b,-b,z_0\}\subset E_{1,\CC}\cap E_{2,\CC}$.  Again by B\'ezout's theorem, $E_{1,\CC}=E_{2,\CC}$.  
\end{description}
Altogether, only Case 2 can occur, and in that case $E_{1,\CC}=E_{2,\CC}$.
\end{proof}

Take $K_1=K_2=B_{\RR}$.  The proposition shows that for parameters $c'\not\sim c$ (i.e., different Hooke ellipses), the complexifications are disjoint outside the real ball: $(E_{c',\CC}\cap E_{c,\CC})\setminus B_{\RR} =\emptyset$.

\begin{lemma}\label{lem:110}
The complexifications of Hooke ellipses for $B_{\RR}$ give a continuous foliation of a subset of $\CC^d\setminus B_{\RR}$.  
\end{lemma}

\begin{proof} 
By the previous result, $z_0\in E_{c,\CC}\cap E_{c',\CC}$ implies $E_{c,\CC} =  E_{c',\CC}$.  So $E_{c,\CC}$ and $E_{c',\CC}$ are  disjoint outside $B_{\RR}$ when $c'\not\sim c$.   
Also, if $c'\to c$ then $f_{c'}\to f_c$   locally uniformly for $\zeta\in\CC^*$, since the parametrization (\ref{eqn:prop16}) is rational (and hence holomorphic) in 
$\zeta$.  So the sets $E_{c,\CC}$ vary continuously in $c$, i.e., we get a continuous foliation. 
\end{proof}

In fact, the foliation fills the entire set $\CC^d\setminus B_{\RR}$, which will be a consequence of the next result.  Define the \emph{(generalized) Joukowski map}
$\calR \colon \CC^d\setminus B_{\RR,\rho} \to\CC^d\setminus B_{\RR}$ 
by the formula 
\begin{equation}\label{eqn:rem}
\calR(c\zeta) := c\zeta +\bar c/\zeta,\quad c\in\partial B_{\RR,\rho},\ |\zeta|>1.
\end{equation}

\begin{proposition}\label{prop:jhomeo}
The Joukowski map is well-defined, and is a homeomorphism. 
\end{proposition}

\begin{proof}
We first verify that $\calR$ is a well-defined function on its domain.  If $z\in\CC^d\setminus B_{\RR,\rho}$ then the line segment $\{tz\colon t\in[0,1]\}$ joining the origin to $z$ intersects $\partial B_{\RR,\rho}$ in a point $c=\tau z$ for some $\tau\in(0,1)$; thus applying formula (\ref{eqn:rem}) to $c/\tau$ gives a value for $\calR(z)$.  We need to show that the formula for $\calR(z)$ gives the same value for any other $c'\in\partial B_{\RR,\rho}$ and $|\eta|>1$ such that $z=c'\eta$.  By Propositions \ref{prop:18} and  \ref{prop:19}, the point $z$ can be on at most one complexified ellipse, and this occurs for parameters in which $c'\sim c$.  If $c'=\bar c$ and $z=c\zeta=\bar c\eta$ then 
$\zeta=\frac{\bar c}{c}\eta$ and
$$
\calR(c\zeta) = c\zeta + \bar c/\zeta = c\tfrac{\bar c}{c}\eta + {\bar c}/ \left(\tfrac{\bar c}{c}\eta \right) = \bar c\eta + c/\eta = c'\eta + \overline{c'}/\eta = \calR(c'\eta).
$$
If $c'=ce^{i\theta}$ for some $\theta\in\RR$ then a similar calculation also gives $\calR(c\zeta)=\calR(c'\eta)$.  Thus we have a well-defined function.  From the formula, it is easy to verify that $\calR$ is continuous.

To show that $\calR$ is one-to-one, suppose $\calR(c\zeta)=\calR(c'\eta)$.  Then 
$$
E_{c,\CC} \ni c\zeta+\bar c/\zeta = c'\eta + \overline{c'}/\eta \in E_{c',\CC},
$$
i.e., $(E_{c,\CC}\cap E_{c',\CC})\setminus B_{\RR} \neq\emptyset$.  By Propositions \ref{prop:19} and \ref{prop:18},  $E_{c,\CC}=E_{c',\CC}$ and $c\sim c'$.  When $c'=\bar c$ we have
$$
c\zeta + \bar c/\zeta = \bar c\eta + c/\eta \  \Longleftrightarrow \  \eta\zeta(c\zeta-\bar c\eta) = c\zeta-\bar c\eta.
$$
Since $|\eta|,|\zeta|>1$ we must have $c\zeta=\bar c\eta$. 

 When $c'=ce^{i\phi}$ a similar calculation yields
$$
e^{i\phi}\eta\zeta c(\zeta - e^{i\phi}\eta) \ = \ \bar c(\zeta - e^{i\phi}\eta) \  \Longrightarrow \  |\eta\zeta| |\zeta - e^{i\phi}\eta| = |\zeta - e^{i\phi}\eta|.
$$
Since $|\eta|,|\zeta|>1$ we must have $\zeta = e^{i\phi}\eta$.  In both cases, we obtain $c\zeta=c'\eta$, proving that $\calR$ is one-to-one.

Since $\calR$ is continuous and injective, it is a homeomorphism onto its image by the domain invariance theorem in topology.  It remains to show that this image is all of $\CC^d\setminus B_{\RR}$.  Suppose for a contradiction that $\Omega:=\calR(\CC^d\setminus B_{\RR,\rho})$ is a proper open subset of $\CC^d\setminus B_{\RR}$.  Then there exists a $w\in\partial\Omega\setminus B_{\RR}$, and $V_{B_{\RR}}(w)=\epsilon$ for some $\epsilon>0$.  Form a sequence $w_n$ of points in $\Omega$ with $w_n\to w$.  We have $w_n=\calR(c_n\zeta_n)$ where $c_n\in\partial B_{\RR,\rho}$ and $|\zeta_n|>1$, and the ellipses  $E_{c_n}$ are inscribed in $B_{\RR}$ for each $n$.  By a standard compactness argument, and passing to a subsequence, we may assume that $c_n\to c\in\partial B_{\RR,\rho}$ and $\zeta_n\to\zeta$.  Since $V_{B_{\RR}}(w_n)=\log|\zeta_n|$ we have $V_{B_{\RR}}(w)=\log|\zeta|$ by continuity, so $|\zeta|=e^{\epsilon}>1$.  Hence $w=\calR(c\zeta)\in\Omega$, contradicting that $w\in\partial\Omega$.
\end{proof}

\begin{corollary}
The complexifications of Hooke ellipses give a continuous foliation of $\CC^d\setminus B_{\RR}$ such that $V_{B_{\RR}}$ is harmonic on each leaf of the foliation.  
\end{corollary}

\begin{proof}
The Joukowski map is onto, so for any $z\in\CC^d\setminus B_{\RR}$ we have $z=\calR(c\zeta) = c\zeta+\bar c/\zeta$ for some $c\in\partial B_{\RR,\rho}$ and $|\zeta|>1$.  Hence $z\in E_{c,\CC}$, i.e., $z$ is an element of the subset of $\CC^d\setminus B_{\RR}$ foliated by complexifications of Hooke ellipses (see Lemma \ref{lem:110}).  Since $z$ was arbitrary, this set must be all of $\CC^d\setminus B_{\RR}$.
\end{proof}

Theorem \ref{prop:19} also yields a formula for the extremal function of the intersection of two solid ellipsoids about the origin.  A generalization will be proved in Theorem \ref{thm:symmetric}.  

\begin{proposition}\label{thm:ellipsoid}
Let $A_1,A_2$ be solid ellipsoids in $\RR^d$ centered at the origin.  Let $A:=A_1\cap A_2$.  Then 
$$
V_A(z)=\max\left\{ V_{A_1}(z),  V_{A_2}(z)   \right\}.  
$$
\end{proposition}

\begin{proof}
Denote the right-hand side by $W(z)$.  Then $W$ is plurisubharmonic and $W(z)=0$ for all $z\in A$.  Hence $W\leq V_A$.  It remains to prove the reverse inequality at points outside $A$.  

 Let $z_0\in\CC^d\setminus A$.  If $W(z_0)>V_{A_1}(z_0)$ then by continuity this inequality holds in an open set, i.e., $W(z)>V_{A_1}(z)$ for all $z$ in a neighborhood of $z_0$.  Hence $W(z)=V_{A_2}(z)$ in that neighborhood, so $W$ is maximal there.  

Similarly, if $W(z_0)>V_{A_2}(z_0)$ then $W$ is maximal in a neighborhood of $z_0$.  

Otherwise $W(z_0)=V_{A_1}(z_0)= V_{A_{2}}(z_0)$.  Now $z_0$ lies on the complexifications  of Hooke ellipses $E_1,E_2$ for $A_1,A_2$ respectively. By Theorem \ref{prop:19}, $E_{1,\CC}=E_{2,\CC}$.  Hence $E_1=E_2$ which is an ellipse $E$ inscribed in both $A_1$ and $A_2$.  So $z_0\in E_{\CC}$ and $E$ is inscribed in $A_1\cap A_2=A$. 

Let $f$ denote the parametrization of $E_{\CC}$ such that $V_{A_1}(f(\zeta))=\log|\zeta|$ for $|\zeta|\geq 1$.  Note that $V_{A_2}(f(\zeta))=\log|\zeta|$ also.  So $W(f(\zeta))=\log|\zeta|$.   Given $u\in\calL$ such that $u\leq 0$ on $A$, the function 
\begin{equation*}
\varphi(\zeta):=u(f(\zeta))-W(f(\zeta))=u(f(\zeta))-\log|\zeta|\end{equation*}
is subharmonic on $\CC\setminus\Delta$ and $\varphi\leq 0$ on $\partial\Delta$.  Hence $\varphi\leq 0$ on $\CC\setminus\Delta$ by the maximum principle.  In particular, $u(z_0)\leq W(z_0)$.  Taking the sup over all such $u$,  $V_A(z_0)\leq W(z_0)$.  

Altogether, for any $z\in\CC^d$, $W$ is either maximal in a neighborhood of $z$ or $V_A(z)\leq W(z)$.  By the domination principle, $V_A\leq W$ and therefore $V_A=W$.  
\end{proof}

\section{Newton ellipses} \label{sec:newton}

Let $c\in\partial B_{\RR,\rho}$ and suppose $z=c\zeta +\bar c/\zeta$ where $|\zeta|\geq 1$.  Then for $j=1,\ldots,d$, 
\begin{equation}\label{eqn:newtonparam}
z_j^2 = (c_j\zeta + \bar c_j/\zeta)^2  =   c_j^2\zeta^2 + \bar c_j^2/\zeta^2 + 2|c_j|^2. 
\end{equation}
When $\zeta=e^{i\theta}$ the right-hand side parametrizes a real ellipse in $\RR^d$ centered at $(2|c_1|^2,\ldots,2|c_d|^2)$; the plane of the ellipse is the translation to this point of the  subspace spanned by $\{\Re(c^2),\Im(c^2)\}$, where we write $c^2:=(c_1^2,\ldots, c_d^2)$.

\begin{definition}\label{def:91}   \rm 
A \emph{Newton ellipse} is the image of a Hooke ellipse  under the square map $Q(z_1,\ldots,z_d)=(z_1^2,\ldots,z_d^2)$.    
A \emph{Newton ellipse for $\Sigma=Q(B_{\RR})$} is the image of  a Hooke ellipse for $B_{\RR}$. 

More generally, let $\lambda\in(0,\infty)$.  Then $Q(\lambda z) = \lambda^2Q(z)$, and it follows that  $Q(\lambda B_{\RR}) = \lambda^2\Sigma$.  Given a Hooke ellipse $E$ for $\lambda B_{\RR}$, its image $Q(E)\subset\lambda^2\Sigma$ is an ellipse contained in $\lambda^2\Sigma$ .  We will define this to be a \emph{Newton ellipse for $\lambda^2\Sigma$}.
\end{definition}

\begin{remark}\rm
Classically, a Hooke ellipse in the complex plane is an ellipse with foci at $-1,1$, while a Newton ellipse has foci at $0,1$.  A Newton ellipse is the image of a Hooke ellipse under the square map $z\mapsto z^2$  (see e.g., \cite{trefethen:multivariate}).   

In our higher-dimensional setting, let $E_{\CC}$ and $Q(E_{\CC})$ be complexified Hooke and Newton ellipses as given in Definition \ref{def:91}.  If we fix $r>1$, then $\zeta\mapsto c\zeta+\bar c/\zeta$ restricted to $|\zeta|=r$ is  a real 1-dimensional ellipse $E(r)$ whose image $Q(E(r))$ is another real 1-dimensional ellipse with a parametrization given by the right-hand side of (\ref{eqn:newtonparam}).  This is an exact analogue of the classical relation.  Note that $E_{\CC}=\bigcup_{r\in(0,\infty)} E(r)$ and $Q(E_{\CC})=\bigcup_{r\in(0,\infty)} Q(E(r))$. 
\end{remark}


Using the calculation (\ref{eqn:newtonparam}), it is easy to see the following.

\begin{lemma} \label{lem:114}
Suppose $a\in\RR^d$ and $c\in\CC^d$ parametrize the complexification of an ellipse $E$ via
$$
\zeta \mapsto a + c\zeta + \bar c/\zeta, \quad \zeta\in\CC^*.
$$
Then $E$ is a Newton ellipse if and only if $a_j=2|c_j|$ for all $j=1,\ldots, d$.  \qed
\end{lemma}

The aim in what follows is to characterize Newton ellipses geometrically as inscribed ellipses.  

\begin{lemma}\label{lem:118}
An ellipse $E\subset \Sigma$ is inscribed in  $\Sigma$ if and only if it intersects every codimension 1 face.
\end{lemma}

\begin{proof}
Denote the faces of $\Sigma$ by $F_0,\ldots,F_d$, which lie on the hyperplanes $H_0,\ldots,H_d$ respectively.  To prove one implication, suppose $E$ does not intersect, say, $F_0$.  By convexity, $E$ is contained in a cone bounded by $F_1,\ldots, F_d$ and their point of intersection $p$.  Consider a translation $v+E$ into the interior of the cone for some small vector $v$.  Then by construction, $v+E$ does not intersect $F_1,\ldots,F_d$, and if $|v|$ is sufficiently small, $v+E$ will not intersect $F_0$ either.  It follows that $v+E$ is interior to $\Sigma$, so $E$ is not inscribed in $\Sigma$.

On the other hand, if $E$ is not inscribed in $\Sigma$, then $v+E$ is interior to $\Sigma$ for some vector $v$.  Now $v\cdot n_j<0$ for some inward normal vector $n_j$ to a face $F_j$.  In this case, translation by $v$ takes points in the interior of $\Sigma$ closer to $H_j$.  But $E=-v+ (v+E)$, so points of $E$ are translations of points of $v+E$ away from $H_j$.  Since $v+E$ does not intersect $H_j$, neither does $E$.  So $E$ does not intersect some face of $\Sigma$.
\end{proof}

\begin{lemma}\label{lem:uniqueellipse}
For a given eccentricity and orientation in $\RR^d$, there is a unique ellipse $E$ inscribed in $\Sigma$.
\end{lemma}

\begin{proof}
Let $E,\tilde E$ be inscribed in $\Sigma$ with the same eccentricity and orientation.      Let $F_0,\ldots,F_d$ be the faces and $n_0,\ldots,n_d$ the corresponding normal vectors.  Since they have the same eccentricity and orientation, and must be of the same size, $\tilde E$ is a translation of $E$ by some vector $v$.  Using convexity of $\Sigma$, it is easy to see that $\epsilon v+ E$ must also be inscribed in $\Sigma$ for every $\epsilon\in[0,1]$.  

Suppose $v$ translates a point of $E\cap F_j$ into the interior of $\Sigma$, for some $j$.      Then $v\cdot n_j>0$ and by Lemma  \ref{lem:119}, $v\cdot n_k<0$ for some $k\neq j$.\footnote{To apply the lemma precisely, we also need the fact that some multiple of $v$ translates a vertex of $F_j$ into the interior of $\Sigma$; this is easy to see.}  By Lemma \ref{lem:118}, $E\cap F_k$ is nonempty, and $v$ translates any point of $E\cap F_k$ outside $\Sigma$.  Hence $\tilde E$ contains points outside $\Sigma$, a contradiction.  So $v$, and by the same reasoning, any multiple $\epsilon v$ (where $\epsilon\in[0,1]$), cannot translate points of $\partial\Sigma$ into the interior of $\Sigma$. 

In other words, if $x$ is a point of $\partial\Sigma$ then $x+\epsilon v\in\partial\Sigma$ for any $\epsilon\in[0,1]$.  Hence if $x$ is contained in some codimension 1 face, then the line segment joining $x$ to $x+v$ must stay in that face.  By Lemma \ref{lem:118}, both $E$ and $\tilde E$ meet every face of $\Sigma$.  For each $j=0,\ldots,d$ let $x_j\in F_j\cap\partial E$;  then the line segment joining $x_j$ to $x_j+v$ is contained in $F_j$.  This implies that $n_j\cdot v=0$ for each $j$.  Since the normal vectors $n_0,\ldots,n_d$ span $\RR^d$, we have $v=0$.  
\end{proof}

We next restrict to the $2$-dimensional case.  To emphasize this, we write $\Sigma_2$ for the standard simplex  in $\RR^2$ (i.e.,  the triangle $\text{co}\{(0,0),(1,0),(0,1)\}$).  
By the previous result, $E\subset\Sigma_2$ is an inscribed ellipse if and only if $E$ intersects each edge of $\Sigma_2$.  We will also restrict our attention to non-degenerate ellipses.




\begin{lemma} \label{lem:105}
Let $E,\tilde E$ be non-degenerate ellipses inscribed in $\Sigma_2$.  Suppose they intersect two edges of $\Sigma_2$ in the same points.  Then $E=\tilde E$.
\end{lemma}

\begin{proof}
Suppose $\{s_j\} = E\cap f_j = \tilde E\cap f_j$ for $j=1,2$, say.  The ellipse $E$ is given by an implicit equation $$x_1^2+a_{02}x_2^2+ a_{11}x_1x_2 +a_{10}x_1+a_{01}x_2+a_{00}=0$$
The fact that $E$ is inscribed means that it intersects $s_1,s_2$ tangent to the  edges $f_1,f_2$; this yields 4  linear equations in the parameters $a_{20},a_{11},a_{10},a_{01},a_{00}$.  It also intersects $f_3$ tangentially, which yields one more linear equation in the parameters.  Hence we obtain a system of $5$ linear equations in $5$ unknowns, so its solution is unique.  By hypothesis, we obtain the same system of equations for the parameters of both $E$ and $\tilde E$.  So $E=\tilde E$.  
\end{proof}

\begin{proposition}\label{prop:117}
Let $\lambda>0$.   Any non-degenerate Newton ellipse for $\lambda^2\Sigma_2$ is inscribed in $\lambda^2\Sigma_2$; and conversely, any non-degenerate ellipse inscribed in $\lambda^2\Sigma_2$ is a Newton ellipse for $\lambda^2\Sigma_2$. 
\end{proposition}

\begin{proof}
We consider when $\lambda=1$.  If $E$ is a Newton ellipse then it is contained $\Sigma_2$.  By definition, it is the image $E=Q(\tilde E)$ of a Hooke ellipse $\tilde E$ for the real unit disk 
$B_{\RR}$.  Let $(c,d)$ be a point in the intersection  $\tilde E\cap \partial B_{\RR}$.  Since $\tilde E$ is symmetric about the origin ($x\in \tilde E\iff -x\in\tilde E$), it contains points whose coordinates contain both positive and negative values.  By continuity, there are points of $\tilde E$ whose coordinates are zero; let  $(a,0),(0,b)$ denote points on each axis. Then $E=Q(\tilde E)$  intersects the boundary of $\Sigma_2$ in the points $(a^2,0),(0,b^2),(c^2,d^2)$ which are on each face of $\Sigma_2$.  Hence by Lemma \ref{lem:118},  $E$ is inscribed in $\Sigma_2$.   

Conversely, suppose $E$ is an ellipse inscribed in $\Sigma_2$.  Then it must intersect the vertical and horizontal edges tangentially; say at $(a,0),(0,b)$, with $a,b\in(0,1)$.   We need to show that $E$ is a Hooke ellipse.  To this end,  construct an ellipse $\tilde E$ centered at the origin that goes through the points $(\sqrt{a},0),(0,\sqrt{b})$  and meets $x_1^2+x_2^2=1$ tangentially in the first quadrant, say in $(\tilde c,\tilde d)$, where $\tilde c,\tilde d\in(0,1)$.  By symmetry of ellipses about the origin, $\tilde E$ also goes through the points $(-\sqrt{a},0)$, $(0,-\sqrt{b})$, and $(-\tilde c,-\tilde d)$.  Hence $\tilde E$ is inscribed in $B_{\RR}$, so is a Hooke ellipse.  The image $Q(\tilde E)$ is then a Newton ellipse that is inscribed in $\Sigma_2$, going tangentially  through the points $(a,0),(0,b),(\tilde c^2,\tilde d^2)$.  Since $E$ is inscribed in $\Sigma_2$ and also goes through $(a,0),(0,b)$ tangentially, we have $Q(\tilde E)=E$ by the previous lemma, i.e., $E$ is a Newton ellipse.

When $\lambda\neq 1$, apply the above proof after rescaling by  $(z_1,z_2)\mapsto (z_1/\lambda,z_2/\lambda)$; this transforms $\lambda^2 \Sigma_2$ to $\Sigma_2$ and $\lambda B_{\RR}$ to $B_{\RR}$.
\end{proof}


We prove the general case $d\geq 2$ (with $\Sigma=Q(B_{\RR})\subset\RR^d$) by considering projections to 2 dimensions.

\begin{proposition}\label{prop:97}
A non-degenerate ellipse $E$ is a Newton ellipse for $\Sigma$  if and only if it is inscribed in  $\Sigma$.
\end{proposition}

\begin{proof}
We first set up some notation.  Let $H_0$ be the hyperplane given by $z_1+\cdots +z_d=1$, and let $H_j$ be the hyperplane given by $z_j=0$ ($j=1,\ldots,d$).  These are the  hyperplanes containing each codimension 1 face of $\Sigma$.  Given a non-degenerate ellipse $E$, let the parametrization of its complexification be given by
\begin{equation}\label{eqn:prop119}
\CC^*\ni \zeta\mapsto a + c\zeta + \bar c/\zeta \in \CC^d, \quad (a \in\RR^d, c \in\CC^d).
\end{equation}
Here $a$ is the center of the ellipse, and $P_E:=\span\{\Re(c),\Im(c)\}\subset\RR^d$ is the real plane containing the ellipse.

Suppose $E$ is a Newton ellipse.  Then the same argument as in the first part of the proof of Proposition \ref{prop:117} works here.  Write $E=Q(\tilde E)$, where $\tilde E$ is a Hooke ellipse for $B_{\RR}$.  Using the symmetry of $\tilde E$ about the origin and the fact that it intersects $\partial B_{\RR}$, we obtain, upon mapping forward to $\Sigma$, that $E$ intersects each hyperplane $H_j$.  Hence by Lemma \ref{lem:118}, $E$ is  inscribed in $\Sigma$.

Now suppose $E$ is a non-degenerate ellipse that is  inscribed in $\Sigma$.      Choose a projection $\pi$ to two of the coordinates such that, restricted to $P_E$, $\pi$ is a bijection.   In what follows, we suppose without loss of generality that $\pi(z_1,\ldots,z_d)=(z_j,z_k)$.  
The projected ellipse $\pi(E)$ has parametrization
$$
\zeta\mapsto (a_j + c_j\zeta + \bar c_j/\zeta, a_k + c_k\zeta+\bar c_k/\zeta).
$$
From this, the condition that $\pi$ is a bijection says that 
\begin{equation}\label{eqn:nonsing}
 M:=\begin{bmatrix} c_j & \bar c_j \\ c_k & \bar c_k  \end{bmatrix} \hbox{ is nonsingular, i.e., } \det(M)=2i\Im(c_j\bar c_k)\neq 0.\end{equation}
  This holds whenever $c_j,c_k$ are not real multiples of each other.  

Suppose for the moment that (\ref{eqn:nonsing}) holds for some $j,k$.  By Lemma \ref{lem:118}, $E$ intersects  $H_0$, $H_j$, and $H_k$ tangentially.  The intersection with $H_0$ is a point $x=(x_1,\ldots,x_d)$ that satisfies $x_j\geq 0$ and $x_1+\cdots+x_d=1$, so that 
$$
x_j+x_k = 1-\sum_{l\neq j,k}x_l =: \lambda^2 \in(0,1].
$$
The intersection with $H_j$  (resp. $H_k$) is a point $x$ such that $x_j=0$  (resp. $x_k=0$).   Write $y=\pi(x)$ where $y=(y_1,y_2)$; then the projection $\pi(E)$ is an ellipse in $\RR^2$ that intersects the boundary of $\lambda^2\Sigma_2$ in each of the  lines $y_1=0$, $y_2=0$, and  $y_1+y_2=\lambda^2$.   Thus $\pi(E)$ is inscribed in $\lambda^2\Sigma_2$, so by Proposition \ref{prop:117} it must be a Newton ellipse for $\lambda^2\Sigma$. 
Hence by Lemma \ref{lem:114},  $a_j=2|c_j|^2$ and $a_k=2|c_k|^2$.

If $c_l\neq 0$ for any other $l\neq j,k$ then at least one of the projections $z\mapsto(z_j,z_l)$ or $z\mapsto(z_k,z_l)$ is nonsingular on $P_E$, according to whether $c_l$ is not a real multiple of $c_j$ or $c_k$.  Then, running the above argument using this projection yields $a_l=2|c_l|^2$.

If $c_l=0$, we use the fact that $E$ intersects $H_l$, which occurs at a point whose $l$-th component is zero.  That is, there is some value of the parameter $\zeta=e^{i\theta}$, $\theta\in\RR$, for which
$$
0 = a_l + c_le^{i\theta} + \bar c_le^{-i\theta}.
$$
Hence if $c_l=0$ then $a_l=0$ too.  

Altogether, $a_j=2|c_j|^2$ for all $j=1,\ldots,d$, so by Lemma \ref{lem:114}, $E$ is a Newton ellipse.  

Finally, in case (\ref{eqn:nonsing}) fails for all pairs of components of $c$,  we perturb the eccentricity and orientation of $E$ (i.e. the parameter $c$) to get a sequence of inscribed ellipses, whose $c$ parameters have components satisfying (\ref{eqn:nonsing}), and  converge to the $c$ parameter for $E$.  By the previous part of the proof, each ellipse in the sequence is a Newton ellipse, so the conditions $a_j=2|c_j|$ of Lemma \ref{lem:114} relating the parameters $a$ and $c$ are satisfied.  Let $\tilde E$ be a limiting ellipse of the sequence.    By continuity, the parameters of $\tilde E$ must also satisfy the conditions $a_j=2|c_j|$, and therefore $\tilde E$ is a Newton ellipse by the same lemma.

Also, $\tilde E$ is inscribed.  If not, one could translate it to the interior of $\Sigma$.  But then some non-degenerate ellipse in the sequence whose points are sufficiently close to $\tilde E$ could also be translated to the interior, a contradiction.   Hence by Lemma \ref{lem:uniqueellipse}, $E=\tilde E$, so $E$ itself is a Newton ellipse.  
\end{proof}

\begin{proposition}\label{prop:degenerate}
Proposition \ref{prop:97} also hold for degenerate ellipses.
\end{proposition}

\begin{proof}
Suppose $E$ is a degenerate inscribed ellipse.  Consider a sequence of non-degenerate inscribed ellipses whose major axes are in the same direction as the degenerate ellipse and whose minor axes have length decreasing to $0$.  By Proposition \ref{prop:97}, each of the nondegenerate ellipses is a Newton ellipse.  A limiting ellipse $\tilde E$ of the sequence is, by construction, a degenerate ellipse with the same $c$ parameter as $E$.  By the same arguments given at the end of the previous proof,  $\tilde E$ is a Newton ellipse for $\Sigma$ as well as an inscribed ellipse for $\Sigma$.  By Lemma \ref{lem:uniqueellipse}, $E=\tilde E$.

If $E$ is a degenerate Newton ellipse, one can show that $E$ is inscribed by running the same approximation argument as above, using the reverse implication in Proposition \ref{prop:97}.
\end{proof}
Thus Newton ellipses are precisely the extremal ellipses for the standard simplex. 

\begin{proposition}\label{prop:98}
There is a continuous foliation of $\CC^d\setminus\Sigma$ by complexifications of Newton ellipses,
$$
f(\zeta) = a + c\zeta + \bar c/\zeta 
$$
with $a_j=2|c_j|$ for $j=1,\ldots,d$, such that $V_{\Sigma}(f(\zeta))=\bigl| \log|\zeta| \bigr|$. 
\end{proposition}

\begin{proof}
Let $z\in\CC^d\setminus \Sigma$.  Then $z=Q(w)$ for some $w\in\CC^d\setminus B_{\RR}$, and $w\in E_{\CC}$, the complexification of a Hooke ellipse $E$.  Then $z\in Q(E_{\CC})$, which is the complexification of a Newton ellipse.  Thus the collection of (complexified) Newton ellipses covers $\CC^d\setminus\Sigma$.  

Let $E_{1,\CC},E_{2,\CC}$ be distinct complexified Newton ellipses with $a\in E_{1,\CC}\cap E_{2,\CC}$.  Then $a=b^2$ for some $b\in \tilde E_{1,\CC}\cap \tilde E_{2,\CC}$, where $\tilde E_{j,\CC}=Q(E_{j,\CC})$.   By Theorem \ref{prop:19}, $b\in B_{\RR}$ so $a\in\Sigma$.  Hence Newton ellipses are disjoint in $\CC^d\setminus \Sigma$. 

Thus complexified Newton ellipses give a foliation of $\CC^d\setminus\Sigma$.  The foliation is continuous because it is the image of a continuous foliation  (of Hooke ellipses for $B_{\RR}$) under the polynomial map $Q$.  

The formula $V_{\Sigma}(f(\zeta))=\bigl|\log|\zeta|\bigr|$ follows easily from (\ref{eqn:VQ}).
\end{proof}

Mapping forward by an invertible linear map, we obtain a foliation of $\CC^d\setminus S$ by complexified ellipses  for any $d$-dimensional simplex $S$ in $\RR^d$.  
\begin{corollary}\label{cor:89}
Let $S\subset\RR^d$ be a simplex.  Let $\calE_S$ be the collection of inscribed ellipses  in $S$.  Then the collection of  complexified ellipses $$\calE_{S,\CC}:= \{E_{\CC}\colon E\in\calE_S\}$$ give a continuous foliation of $\CC^d\setminus S$ on which $V_S$ is harmonic.  Precisely, each ellipse has a parametrization $f\colon\CC^*\to\CC^d$  of the form 
\begin{equation}\label{eqn:cor122}
f(\zeta) = a + c\zeta + \bar c/\zeta, \quad a\in\RR^d, c\in\CC^d\setminus\{0\},
\end{equation}
and  \begin{equation}\label{eqn:log} V_S(f(\zeta)) = \bigl|\log|\zeta|\bigr|. \end{equation}

Conversely, suppose that $f\colon\CC^*\to\CC^d$ is as in (\ref{eqn:cor122}), and  that the real ellipse $E:=\{f(e^{i\theta})\colon\theta\in\RR\}$ is contained in $S$.  If (\ref{eqn:log}) holds when $|\zeta|\neq 1$ then $E$ is inscribed in $S$, otherwise, $V_S(f(\zeta))<|\log|\zeta||$ for each $\zeta$ such that $|\zeta|\neq 1$.
\end{corollary}

\begin{proof}
Suppose one of the vertices of $S$ is at the origin.  Then there is an invertible linear map $L:\RR^d\to\RR^d$ such that 
$L(\Sigma)=S$.  The result holds for $\Sigma$ by the previous proposition.  If $E$ is an inscribed (i.e. Newton) ellipse for $\Sigma$, then $L(E)$ is an inscribed ellipse for $S$.  Extending $L$ as an invertible linear map to $\CC^d$, we see that for $E,\tilde E\in\calE_{\Sigma}$, we have $E=\tilde E$ if and only if $L(E)=L(\tilde E)$.  If $f$ denotes the parametrization of $E_{\CC}$, which is of the form (\ref{eqn:cor122}) and $V_{\Sigma}(f(\zeta))=\bigl|\log|\zeta|\bigr|$, then the parametrization of  $L(E)_{\CC}$ is given by $L\circ f$, which has the same form: $L(a) + L(c)\zeta + \overline{L(c)}/\zeta$, and $V_S(L(f(\zeta)))=V_{\Sigma}(f(\zeta)) =   \bigl|\log|\zeta|\bigr|$.  

It is also easy to see that if $E$ is inscribed in $S$ then $L^{-1}(E)=:\tilde E$ is a Newton ellipse for $\Sigma$.  Hence $\calE_{S}=\{L(\tilde E)\colon \tilde E\in\calE_{\Sigma}\}$, and these ellipses have the desired properties by the previous paragraph.  

If none of the vertices are at the origin, we translate a vertex $b$  of $S$ to the origin and then apply a linear map $L$ as in the first part of the proof.  The corresponding ellipses have the parametrization $\zeta\mapsto (L(a)-b) +   L(c)\zeta + \overline{L(c)}/\zeta.$ 

We prove the last statement by contraposition.  Suppose $E\subset S$ is not inscribed in $S$.  By Lemma \ref{lem:118} there is a face of $S$ that does not intersect $E$.  We can shrink $S$ to a smaller simplex $\tilde S\subset S$ by translating supporting hyperplanes of $S$ in interior normal directions, so that $E\subset\tilde S$ and $E$ meets all but one face of $\tilde S$.  By making a linear change of coordinates as in the previous paragraph, we may reduce to the case in which $\tilde S=\Sigma$  and $E$ does not intersect the face contained in the hyperplane $\{(x_1,\ldots,x_d)\in\RR^d\colon x_1+\cdots+x_d=1\}$.  Then, $E$ is inscribed in $t^2\Sigma$ for some $t\in(0,1)$, so is a Newton ellipse.  The preimage of $E$ under the square map is the union of  two Hooke ellipses for $tB_{\RR}$ with parametrizations $g_{\pm}(\zeta)= b_{\pm}\zeta+\bar{b_{\pm}}/\zeta$, where $b_{\pm}$ are the square roots of $c$ as in (\ref{eqn:cor122}).  Put $\zeta=\eta^2$.  If $|\zeta|>1$ then 
$$
\log|\zeta|=2\log|\eta|= 2V_{tB_{\RR}}(g_{\pm}(\eta)) = V_{t^2\Sigma}(f(\zeta)) > V_{\Sigma}(f(\zeta))=V_{\tilde S}(f(\zeta))\geq V_S(f(\zeta)), 
$$  
i.e., $V_S(f(\zeta))<\log|\zeta|$, and (\ref{eqn:log}) fails.  A similar calculation works when $|\zeta|<1$ too.
\end{proof}

As with $\Sigma$, we will call ellipses inscribed in a simplex $S$ \emph{Newton ellipses} for $S$.  It is easy to see that Lemma \ref{lem:118} and Propositions \ref{prop:97}--\ref{prop:98} also hold for $S$ (map to $\Sigma$, as in the proof above).    The center of a Newton ellipse for $S$ (given by $a$ in (\ref{eqn:cor122}))  is uniquely determined by the parameter $c$, as it is for $\Sigma$.   We recover the geometric fact that an inscribed ellipse in a simplex for a given eccentricity and orientation is unique.

\section{Ellipses inscribed in a convex polytope} \label{sec:inscribed}


Let $K\subset\RR^d$ be a compact convex $d$-dimensional polytope.   We will relate ellipses inscribed in $K$ to supporting simplices and strips.  An ellipse in $\RR^d$ may be characterized as the image $E=L(C)$ of the unit circle $$C=\{(\cos\theta,\sin\theta)\colon\theta\in\RR\}\subset\RR^2$$  under an  affine map 
$$
 C\ni (\cos\theta,\sin\theta) \stackrel{L}{\longmapsto} \ba_1 + \ba_2\cos(\theta) + \ba_3\sin(\theta),
$$
where $\ba_j\in\RR^d$, $j=1,2,3$.   In complex notation, identifying $(\cos\theta,\sin\theta)\in\RR^2$ with $e^{i\theta}\in\CC$, we may write this as 
\begin{equation} \label{eqn:ellipse}
e^{i\theta}\mapsto a +  c e^{i\theta} + \bar ce^{-i\theta}
\end{equation}
where $a=\ba_1$ and $c=\tfrac{1}{2}(\ba_2-i\ba_3)\in\CC^d$.  (Compare equation (\ref{eqn:cor122}).)  



\medskip

Clearly, a non-degenerate inscribed ellipse $E$, considered as an element of the collection of all ellipses contained in $K$ with the same eccentricity and orientation as $E$,  is one that bounds the largest possible region.   Otherwise one could translate $E$ to $\intt(K)$ and expand it slightly to get a larger ellipse still contained in $K$, with the same eccentricity and orientation. Similarly, a degenerate ellipse is the line segment contained in $K$ of greatest length for a given direction.

The next lemma shows, by a simple compactness argument, that there is an ellipse inscribed in $K$ of any eccentricity and orientation.  
\begin{lemma}\label{lem:ellipseexist}
Let $c\in\CC^d$.  Then there exists $r>0$ such that for $c':=rc$,
\begin{equation}\label{eqn:lem101}
e^{i\theta}\mapsto a + c'e^{i\theta}+\bar{c'}e^{-i\theta}
\end{equation}
parametrizes an ellipse inscribed in $K$, for some $a\in K$.
\end{lemma}

\begin{proof}
Define the function $r_c\colon K\to[0,\infty)$ by letting $r_c(a)$ be the largest value of $r\geq 0$ such that the set parametrized by (\ref{eqn:lem101}) for $c'=rc$ is contained in $K$.  

Given $a\in K$ let $a_n:=a+v_n$ ($|v_n|\to 0$) be a sequence of vectors such that $r_n:=r(a_n)$ decreases  to $$\limsup_{|v|\to 0}r_c(a+v)=:\tilde r.$$  Then each point of the ellipse  $E:= \{ a+\tilde rce^{i\theta} + \tilde r\bar ce^{-i\theta}\colon \theta\in\RR\}$ is a limit point of the union $\bigcup_n E_n$, where $E_n=\{ a_n + r_nce^{i\theta}+r_n\bar{c}e^{-i\theta}\colon \theta\in\RR\}.$  We have $E\subset K$ by compactness, so $\tilde r\leq r_c(a)$.  

Hence $r_c$ is upper semicontinuous, so it attains a maximum on $K$, say at a point $a_c$.  The ellipse parametrized by $e^{i\theta}\mapsto a_c + rce^{i\theta}+r\bar{c}e^{-i\theta}$ (where $r=r_c(a_c)$)  is our desired inscribed ellipse.  If not, we could translate it to an ellipse contained in $\intt(K)$; then $r_c(a)>r_c(a_c)$, where $a$ is the center of the translated ellipse, and obtain a contradiction.
\end{proof}

\begin{remark}\rm
The above argument requires only that one can translate points and remain inside $K$.  Hence the lemma holds for any convex body.  But we must restrict to polytopes in order to apply the results of Sections \ref{sec:simplices} and \ref{sec:strips}.
\end{remark}

We want to relate inscribed ellipses to supporting simplices and strips.
Let $E$ be an inscribed ellipse in $K$. Then $E$ intersects the boundary of $K$ in finitely many points.  
To see this, note that the intersection is nonempty, otherwise $E$ would be in $\intt(K)$.  In fact, $E$ intersects each face in at most one point; the intersection must be tangential in order that $E$ remain in $K$.  Denote by $F_0,\ldots,F_N$ the faces of $K$ that intersect $E$, and for each $j=0,\ldots,N$,
$$
\{a_j\} :=E\cap F_j;
$$ 
then $E\cap\partial K = \{a_0,\ldots,a_N\}$.  Note that some of the $a_j$s may coincide.

For each $j$, let $H_j$ denote the hyperplane containing $F_j$, let $n_j$ denote the unit normal, and let $l_j$ denote the corresponding affine map, as defined previously.




\begin{proposition}\label{prop:ellipsesimplex}
Let $E\subset K$ be an inscribed ellipse.  Then there is a supporting simplex or strip $S\supset K$ such that $E$ is inscribed in $S$.
\end{proposition}

\begin{proof} 
As before, let $F_0,\ldots,F_N$ denote the faces of $K$ that intersect $E$.  Let $H_k,n_k,l_k$ be the corresponding hyperplanes, normals, and affine maps, for each $k=0,\ldots, N$.

We first consider when $\dim(\span\{n_0,\ldots,n_N\})=d$.  Without loss of generality, assume $n_1,\ldots,n_d$ are linearly independent, and let $p_0$ be the point of intersection of     $H_1,\ldots,H_d$.   Let $a$ be as in (\ref{eqn:ellipse}); then $l_k(a)>0=l_k(p_0)$ for all $k=1,\ldots, d$,  and   
\begin{equation}\label{eqn:prop121a}
n_k\cdot(a-p_0)>0   \ \hbox{ for all } k=1,\ldots,d.
\end{equation}
Since $E$ is an inscribed ellipse, a translation in the direction of $a-p_0$ must send at least one point of $E\cap\partial K$ outside of $K$. Say, $a_0$ on the face $F_0$ translates outside $K$; then $l_0( a_0+(a-p_0))<0$, from which
\begin{equation} \label{eqn:prop121b}
n_0\cdot(a-p_0)<0.
\end{equation}
We can find $j\leq d$ linearly independent normal vectors as in Lemma \ref{lem:4.4}.  Say (after relabelling) that these are the vectors $n_1,\ldots,n_j$, and are such that $\{n_k\}_{k=0}^j$ is linearly dependent but any subcollection of $j$ vectors from $\{n_k\}_{k=0}^j$ is linearly independent.  If $j<d$ then by (\ref{eqn:prop121a}), (\ref{eqn:prop121b}), and Proposition \ref{prop:114}, $$S:=\{x\in\RR^d \colon l_k(x)>0 \hbox{ for all } k=1,\ldots,j\}$$ is a supporting strip to $K$ with $j$-dimensional cross-section.  For any vector $b$, $b=b_1+b_2$, where $b_1$ is along the strip and $b_2$ is along the cross-section.  Translating by $b_1$  sends points of $(E\cap\partial K)\subset\partial S$ into points of $\partial S$, while translating by $b_2$ sends points of $E\cap\partial K$ outside $S$.  Hence $E$ is inscribed in $S$.  If $j=d$ then by a similar argument, $S$ is a supporting simplex such that $E$ is inscribed in $S$.

Now suppose $\dim(\span\{n_0,\ldots,n_N\})<d$.  Then $V:=\span\{n_0,\ldots,n_N\}$ is a space of dimension $d_V\leq N$.  Let $K_V=\pi_V(K)$, $E_V=\pi_V(E)$ where $\pi_V\colon\RR^d\to V$ denotes the orthogonal projection.  Apply the previous argument to $K_V,E_V$ in  $V\simeq\RR^{d_V}$, and denote by $S_V$ the strip or simplex obtained (with say, $j$-dimensional cross-section, $j\leq d_V$).  Then, it is straightforward to verify that
$$
S:=S_V + V^{\perp},
$$
where $V^{\perp}$ denotes the orthogonal complement to $V$, is a supporting strip to $K$ in $\RR^d$ with $j$-dimensional cross-section, and $E$ is inscribed in $S$.
\end{proof}

For the rest of this section, let $K\subset\RR^d$  be a compact convex polytope such that the normals to the faces of $K$ satisfy the linear independence condition in Theorem \ref{thm:simplices}.

\begin{lemma}\label{lem:ellipseunique}
For a fixed $c\in\CC^d$, the ellipse given by Lemma \ref{lem:ellipseexist} is unique.  Consequently, the center $a=a(c)$ is a well-defined function of $c$.  
\end{lemma}

\begin{proof}
If there were two such ellipses, then by the proof of Lemma \ref{lem:ellipseexist} they must have the same value of $c'=rc$ in (\ref{eqn:lem101}), given by $r=\max_{a\in K} r_c(a)$.  So they are translates of each other; say one ellipse, $E_0$, is centered at $a$ and the other, $E_1$, at $a+v$.     By the previous proposition, $E_0$ is inscribed in some supporting simplex $S\supset K$, and it is easy to see that $E_1$ must also be inscribed in this same $S$.  We have two Newton ellipses for $S$ with the same eccentricity and orientation, contradicting the fact that such ellipses are unique.
\end{proof}

Recall that $\calS(K)$ denotes the collection of supporting simplices (and strips) of $K$.  Also, write 
\begin{equation}\label{eqn:thm91}  K=\{x\in\RR^d\colon l_j(x)\geq 0 \hbox{ for all }j=1,\ldots,N\},\end{equation}
where the $l_j$ are affine maps such that each hyperplane $H_j=\{l_j=0\}$ contains a face of $K$ of codimension 1.  
We now prove a preliminary version of our main theorem for polytopes with a small number of faces.
\begin{theorem} \label{thm:main}
Let $K\subset\RR^d$ be a compact convex polytope whose faces satisfy the linear independence condition in Theorem \ref{thm:simplices}. Suppose $N\in\{d+1,d+2\}$ in (\ref{eqn:thm91}).  Then we have the formula
\begin{equation}\label{eqn:mainthm}
V_K(z) = \max\{ V_{S}(z)\colon S \in\calS(K) \}.
\end{equation}
\end{theorem}


\begin{proof}  When $N=d+1$, $K$ is itself a simplex and the theorem is trivial.

Suppose $N=d+2$.  Denote the right-hand side of (\ref{eqn:mainthm}) by $W=W(z)$.  Since $V_S\leq V_K$ for all $S\in\calS(K)$, we have $W\leq V_K$. It remains to show that $V_K\leq W$.

We have $K=S_1\cap S_2$ where $S_1,S_2$ are the simplices given by 
$$S_1=\{ x\in\RR^d\colon l_j(x)\geq 0 \hbox{ for all }j\neq d+1\},\  S_2=\{ x\in\RR^d\colon l_j(x)\geq 0 \hbox{ for all }j\neq d+2\}$$
(after a possible relabelling of the $l_j$s).  By a linear change of coordinates, we may assume that $l_j(x)=x_j$ for all $j=1,\ldots,d$.  Then in these coordinates, 
$$l_{d+1}(x)=a_0-a_1x_1-\cdots-a_dx_d, \quad  l_{d+2}(x)=b_0-b_1x_1-\cdots-b_dx_d,$$
where $a_j,b_j>0$ for all $j$.  Positivity of $a_j,b_j$ follows from the condition on the normals to the hyperplanes $H_{d+1}$ and $H_{d+2}$ as given in (\ref{eqn:lem19}) of Lemma \ref{lem:119}.  Then, the inverse images of $S_1$ and $S_2$ under the square map are solid ellipsoids about the origin, $A_1=Q^{-1}(S_1)$ , $A_2=Q^{-1}(S_2)$, with 
$$\begin{aligned}
\partial A_1 &=\{x\in\RR^d\colon a_1x_1^2+\cdots+ a_dx_d^2=a_0\},\\
 \partial A_2 &=\{x\in\RR^d\colon b_1x_1^2+\cdots+b_dx_d^2=b_0\}.
\end{aligned}$$  Let $A=A_1\cap A_2$; then $A=Q^{-1}(K)$.   By Proposition \ref{thm:ellipsoid}, $V_{A}=\max\{V_{A_1},V_{A_2}\}$.  Applying Proposition \ref{prop:61}(2) to both sides, $V_K=\max\{V_{S_1},V_{S_2}\}\leq W,$ which proves the theorem when $N=d+2$.
\end{proof}

Following \cite{burnslevmau:exterior}, we relate the geometry of an ellipse $E$ to the growth of an extremal function on its complexification.  If $E$ is given by (\ref{eqn:ellipse}),  $E_{\CC}$ is given by (\ref{eqn:cor122}).  

 It is convenient to use projective space.  Let $\CC^d\subset\PP^d$ via the standard inclusion $z\hookrightarrow[1:z]$ in homogeneous coordinates, and let $H_{\infty}:=\PP^d\setminus\CC^d$.  The parametrization of $E_{\CC}$ may be rewritten in homogeneous coordinates as 
$$
f(\zeta) = [\tfrac{1}{\zeta}:\tfrac{a}{\zeta} +c + \tfrac{\bar c}{\zeta^2}], 
$$
which extends (taking the limit as $|\zeta|\to\infty$) across $H_{\infty}$ as $f(\infty) = [0:c]$.  Let us write the closure of $E_{\CC}$ in $\PP^d$ also as $E_{\CC}$. 

We study the growth of the extremal function by extending a modification of it across $H_{\infty}$.  Given a compact set $K\subset\CC^d\subset\PP^d$, define
$$
W_K([1:z]):=V_K(z)-\log|z|.
$$
Since $V_K\in\calL$, $W_K$ is uniformly bounded above on $\CC^d$, so it extends across $H_{\infty}$ in $\PP^d$:
$$
W_K([0:c]) = \limsup_{t\to 0,z\to c} (V_K(z/t) -\log|z/t|) .
$$
When $K$ is regular, it follows from Corollary 4.4 of \cite{bloomlevmau:robin} that the limsup is in fact a limit and $W_K$ is continuous across $H_{\infty}$.  Hence 
\begin{equation}\label{eqn:104}
W_K([0:c]) = \rho_K(c)-\log|c|.
\end{equation}

When $K$ is a simplex, the value of $W_K$ across $H_{\infty}$ is related to the parametrization of a Newton ellipse.

\begin{lemma}\label{prop:92}
Suppose the ellipse $E$ is inscribed in a simplex $S$ and its complexification $E_{\CC}$ is parametrized by  
\begin{equation}\label{eqn:lem10.2}
f(\zeta) = a + c\zeta + \bar c/\zeta.
\end{equation}
Then $W_S([0:c])=-\log|c|$ and $\rho_S(c)=0$.
\end{lemma}

\begin{proof}
Since $E$ is inscribed in $S$, we have $V_S(f(\zeta))=\log|\zeta|$ by Corollary \ref{cor:89}, so  
$$\begin{aligned}
W_{S}([0:c]) & = \lim_{t\to 0,z\to c}\left(V_{S}(z/t)-\log|z/t| \right) \\
&=\lim_{|\zeta|\to\infty}\left( V_{S}(c\zeta) - \log|c\zeta|\right) \\
&= \lim_{|\zeta|\to\infty}\left( V_{S}(f(\zeta)+O(1))-\log|c\zeta| \right)\\
&=\lim_{|\zeta|\to\infty}\left( V_{S}(f(\zeta)) + O(\tfrac{1}{|\zeta|}) -\log|\zeta|\right) -\log|c| 
= -\log|c|.
\end{aligned}$$
By (\ref{eqn:104}), $\rho_S(c)=0$.  
\end{proof}

\def\calC{\mathcal{C}}

\begin{proposition} \label{prop:106}  
Let $K\subset\RR^d$ be a compact convex polytope whose faces satisfy the linear independence condition in Theorem \ref{thm:simplices}. 

Let $E$ be an ellipse inscribed in $K$.  If $E_{\CC}$ is parametrized as in equation (\ref{eqn:lem10.2}), then  $V_K(f(\zeta))=\log|\zeta|$ for all  $|\zeta|\geq 1$, $\rho_K(c)=0$, and $W_K([0:c])=-\log|c|$. 

If $V_K$ satisfies (\ref{eqn:mainthm}) then we also have a converse.   That is, if $E_{\CC}$ is parametrized as in equation (\ref{eqn:lem10.2}), with $E\subset K$, and  $V_K(f(\zeta))=\log|\zeta|$ for all $|\zeta|>1$, then $E$ is inscribed in $K$.  
\end{proposition}

\begin{proof}
If $E$ is inscribed in $K$ then by Proposition \ref{prop:ellipsesimplex} and Theorem \ref{thm:simplices}, it is inscribed in some supporting simplex. By Corollary \ref{cor:89},  $V_S$ is harmonic on $E_{\CC}\setminus E$ with $V_S(f(\zeta))=\log|\zeta|$.   The function $V_K-V_S$ is  bounded, nonnegative and subharmonic  on $E_{\CC}\setminus E$, and goes to zero on $E$.  Hence by the maximum principle, $V_K-V_S$ is identically zero on all of $E_{\CC}$.  So on $E_{\CC}$,  $V_K=V_S$.
The same calculation as Lemma \ref{prop:92} then yields $\rho_K(c)=0$ and $W_K([0:c])=-\log|c|$.

We prove the last statement by contraposition.  
 Let $f$ parametrize $E_{\CC}$ as in (\ref{eqn:lem10.2}), $E\subset K$, and suppose $E$ is not inscribed in $K$.  Then $E$ is not inscribed in $S$ for any supporting simplex $S$, and $V_S(f(\zeta))<\log|\zeta|$ by the last part of Corollary \ref{cor:89}.  Hence by equation (\ref{eqn:mainthm}), $V_K(f(\zeta))<\log|\zeta|$.
\end{proof}


\section{The Robin exponential map} \label{sec:robinexpmap}

In this section, we again restrict to the case in which our convex polytope $K\subset\RR^d$ satisfies the linear independence condition in Theorem \ref{thm:simplices}, so that $\calS(K)$ is a collection of $d$-dimensional supporting simplices.

\begin{proposition}\label{prop:107}
Let $E_{\CC},\tilde E_{\CC}$ be complexifications of ellipses $E,\tilde E$ inscribed in $K$.  Then $(E_{\CC}\cap\tilde E_{\CC})\setminus K=\emptyset$ or $E=\tilde E$.  
\end{proposition}

\begin{proof}
 Let $S\supset K$ be a supporting simplex such that $E$ is inscribed in $S$.  If $\tilde E\neq E$ and $\tilde E$ also happens to be  inscribed in $S$, then the complexifications $E_{\CC}, \tilde E_{\CC}$ are disjoint outside of $S$  by Corollary \ref{cor:89}, and hence disjoint outside of $K$.  Therefore it is sufficient to show the following:
\begin{itemize}
\item[($\dag$)] {\sl If $E,\tilde E$ are inscribed in $K$ and $(E_{\CC}\cap \tilde E_{\CC})\setminus K\neq\emptyset$, then there is a supporting simplex $S\supset K$ such that  $E,\tilde E$ are both inscribed in $S$.} \end{itemize}


To prove ($\dag$), let $S\supset K$ be a supporting simplex such that $E$ is inscribed in $S$.     Let $f,\tilde f$ be the parametrizations of $E_{\CC},\tilde E_{\CC}$ as in (\ref{eqn:lem10.2}).   
Let $w\in\CC^d\setminus S$ and write $w=f(s)=\tilde f(\tilde s)$.  Then $V_K(w)=\log|s|=\log|\tilde s|$, so $|s|=|\tilde s|>1$.  By Proposition \ref{prop:106}, $V_S(f(s))=\log|s|$ since $E$ is inscribed in $S$, which yields 
$$
0 = V_S(w)-\log|s| =   V_S(w)-\log|\tilde s|  
 = V_S(\tilde f(\tilde s))-\log|\tilde s|  =: u(\tilde s).
$$
Since $\tilde E\subset K\subset S$, we have $u(\zeta)= 0$ when $|\zeta|=1$.  By the maximum principle, $u\equiv 0$ on the exterior of the unit disk, so $V_S(\tilde f(\zeta))=\log|\zeta|$ for all $|\zeta|>1$.  By the second part of Corollary \ref{cor:89} (or the second part of Proposition \ref{prop:106} applied to $S$), $\tilde E$ is inscribed in $S$.  This proves ($\dag$), and hence the proposition.
\end{proof}

 Recall that $K_{\rho}=\{z\in\CC^d\colon \rho_K(z)\leq 0\}$.  We want to define the \emph{Robin exponential map} on $\CC^d\setminus K_{\rho}$ by 
\begin{equation}\label{eqn:rexp}
\calR_K(c\zeta):= a + rc\zeta+ r\bar c/\zeta, \end{equation}
where $\rho_K(c)=0$, $|\zeta|>1$, and $a=a(c)$, $r=r(c)$ are chosen as in Lemma \ref{lem:ellipseexist} so that the ellipse $E:=\{a+rce^{i\theta}+ r\bar ce^{-i\theta}\colon \theta\in\RR\}$ is inscribed in $K$.  
We can do this under the condition that formula (\ref{eqn:mainthm}) holds, in order to apply the second part of Proposition \ref{prop:106}.

\begin{lemma}\label{lem:10.9}
Suppose (\ref{eqn:mainthm}) holds for $V_K$.  Then the Robin exponential map is well-defined,  $\calR_K\colon\CC^d\setminus K_{\rho} \to \CC^d\setminus K$,  and for $c\in\partial K_{\rho}$, $|\zeta|>1$,
\begin{equation*}
\calR_K(c\zeta)= a(c) + c\zeta+ \bar c/\zeta.
\end{equation*}
\end{lemma} 

\begin{proof}
Let $z\in\CC^d\setminus K_{\rho}$.  Then $z=c\zeta$ for some $c\in\partial K_{\rho}$ and $|\zeta|>1$.  To show that this is well-defined, suppose $z=c'\zeta'$. Then $c'=(\zeta/\zeta')c$ so $[0:c]=[0:c']$.   By Proposition \ref{prop:106}, $\log|c|=\log|c'|\  (=-W_K([0:c])$.  Hence $c'=e^{i\theta}c$  and $\zeta'=e^{-i\theta}\zeta$ for some $\theta\in\RR$, so 
$c/\zeta=c'/\zeta$.  Proposition \ref{prop:106} also shows that $r=1$.  If $a(c)\neq a(c')$ these would be the centers of two ellipses inscribed in $K$ with the same eccentricity and orientation, contradicting Lemma \ref{lem:ellipseunique}.  So $a(c)=a(c')$ and $\calR_K(c\zeta)=\calR_K(c'\zeta')$, showing that we have a well-defined function.  Finally,  $\calR_K(c\zeta)\not\in K$ since $|\zeta|>1$.  So $\CC^d\setminus K$ is a valid codomain for $\calR_K$. 
\end{proof}

\begin{theorem}\label{thm:Robinexpmap}
Under the hypotheses of Lemma \ref{lem:10.9}, the Robin exponential map is a homeomorphism.
\end{theorem}

\begin{proof}
We first show $\calR_K$ is one-to-one.  Suppose $z\neq z'$.  Write $z=c\zeta$ and $z'=c'\zeta'$ where $c,c'\in\partial K_{\rho}$ and $|\zeta|,|\zeta'|>1$.  If $c\neq e^{i\theta}c'$ for any $\theta\in\RR$ then $\calR_K(c\zeta)$ and $\calR_K(c'\zeta')$ lie on the complexifications of different inscribed ellipses $E_c,E_{c'}$.  By disjointness outside $K$ (Proposition \ref{prop:107}), $\calR_K(c\zeta)\neq\calR_K(c'\zeta')$.  

If $c=c'e^{i\theta}$ for some $\theta$, then $z'=c\eta$ where $\eta=e^{-i\theta}\zeta'$; and $z\neq z'$ implies $\zeta\neq\eta$. Then $\calR_K(c\zeta)\neq\calR_K(c\eta)$ because they are points on the ellipse $E_c$ for different parameters.  

In either case $\calR_K(z)\neq\calR_K(z')$, so $\calR_K$ is one-to-one.

Continuity of $\calR_K$ will follow from the continuity of $(c,\zeta)\mapsto a(c)+c\zeta +\bar c/\zeta$.  When $K=S$ is a simplex it is easy to see that $a(c)$ is continuous in $c$. (Indeed, we have the explicit formula $a(c)=(2|c_1|,\ldots,2|c_d|)$ for the standard simplex.)  

To show continuity of $a(c)$ when $K$ is a polytope, consider a sequence $c_n\to c$.   We need to show that $a(c_n)\to a(c)$.  Let $S_1,\ldots,S_j$ be all of the supporting simplices of $K$ such that $E_c$ is inscribed in each simplex.  By $(\dag)$ in the proof of Proposition \ref{prop:107}, each ellipse $E_{c_n}$ is inscribed in one of $S_1,\ldots,S_j$.  Partition $\{c_n\}$ into subsequences $\{c_{n,1}\},\ldots,\{c_{n,j}\}$ such that $E_{c_{n,k}}$ is inscribed in $S_k$ for each $k=1,\ldots,j$.   Then $a(c_{n,k})\to a(c)$ for each $k$ as $n\to\infty$.  Combining all the subsequences back into the original sequence, $a(c_n)\to a(c)$.  So $a(c)$ is continuous in $c$.  It follows that $\calR_K$ is continuous.

Since $\calR_K$ is continuous and injective, it is a homeomorphism onto its image by the domain invariance theorem in topology.  The same argument as in the proof of Proposition \ref{prop:jhomeo} shows that $\calR_K$ is onto $\CC^d\setminus K$.
   \end{proof}


\section{The extremal function of a convex polytope}\label{sec:computing}

In this section, we prove the following version of our main theorem.

\begin{theorem} \label{thm:mainN}
Let $K\subset\RR^d$ be a compact convex polytope whose faces satisfy the linear independence condition in Theorem \ref{thm:simplices}. Then 
\begin{equation*}
V_K(z) = \max\{ V_{S}(z)\colon S \in\calS(K) \}.
\end{equation*}
\end{theorem}

As in (\ref{eqn:thm91}), write
$$
K=\{x\in\RR^d\colon l_j(x)\geq 0 \hbox{ for all } j=1,\ldots,N\}.
$$
for affine maps $l_1,\ldots,l_N$.  We will prove the theorem by induction on $N$.  The base cases $N\in\{d+1,d+2\}$ have been proved in Theorem \ref{thm:main}.  The argument will also make use of  Lemma \ref{lem:10.9} and Theorem \ref{thm:Robinexpmap}.  These are  consequences of the formula for $V_K$ in the theorem, and may be applied to polytopes for which the inductive hypothesis is assumed to be true.


\begin{proof}[Proof of Theorem \ref{thm:mainN} by induction]


Let $W(z):=\max\{ V_{S}(z)\colon S \in\calS(K) \}$.  As before, we want to show that $V_K\leq W$.  

Let $z_0\in\CC^d\setminus K$.  Choose $K_0\supset K$ given by $N-1$ of the affine maps $l_j$, for which $V_{K_0}(z_0)=W(z_0)$.  (We are using the inductive hypothesis and the fact that every  $S\in\calS(K)$ is also in $\calS(L)$ for some $L$ given by $N-1$ of the $l_j$s.)   Also, applying Lemma \ref{lem:10.9} and  Theorem \ref{thm:Robinexpmap} to $K_0$ under induction, the Robin exponential map $\calR_{K_0}$ exists and is onto. Hence there is an ellipse $E_0$ inscribed in $K_0$ such that $z_0=f_0(\zeta_0)\in E_{0,\CC}$ where  $|\zeta_0|>1$.   As before, $f_0$ denotes a parametrization of $E_{0,\CC}$ as in  (\ref{eqn:lem10.2}).\footnote{Recall that in terms of the $c$ parameter, $\calR_{K_0}(c\zeta_0)=z_0$.}

By Proposition \ref{prop:ellipsesimplex} there is also a supporting simplex $S$ of $K_0$ such that $E_0$ is inscribed in $S$.  After a possible relabelling of affine maps, we may assume that
$$\begin{aligned}
K_0 &=\{x\in\RR^d\colon l_j(x)\geq 0 \hbox{ for all } j=1,\ldots,N-1\}, \\ S&=\{x\in\RR^d\colon l_j(x)\geq 0\hbox{ for all }j=1,\ldots, d+1\}.
\end{aligned}$$
Now consider the convex polytope 
$$
K_1:=\{x\in\RR^d\colon l_j(x)\geq 0 \hbox{ for all } 1\leq j\leq N,\ j\neq N-1\}.
$$
Then $K=K_0\cap K_1$ by construction, and
$$K_{2}:=K_0\cup K_1=\{x\in\RR^d\colon l_j(x)\geq 0 \hbox{ for all } j=1,\ldots,N-2\}$$ is also a convex polytope.  Moreover, since $N\geq d+3$, $K_2\subset S$, so $E_0$ is also inscribed in $K_2$.  

We claim that $E_0$ is inscribed in $K_1$.  For the purpose of contradiction, suppose not.  By induction, as with $K_0$, there is an ellipse $E_1$  inscribed in $K_1$ whose complexification $E_{1,\CC}$ has parametrization $f_1$ as in (\ref{eqn:lem10.2}), such that $z_0\in E_{1,\CC}$ and  $V_{K_1}(f_1(\zeta))=\log|\zeta|$ when $|\zeta|>1$.  Write $z_0=f_1(\zeta_1)$ where $|\zeta_1|>1$.  Then $E_1\subset K_2$ and $E_1\neq E_0$.  

If $E_1$ is not inscribed in $K_2$, then by induction applied to $K_2$ and Proposition \ref{prop:106},  $V_{K_2}(z_0)<\log|\zeta_1|= V_{K_1}(z_0)$.  On the other hand, by the same proposition, $V_{K_2}(z_0)=\log|\zeta_0|=V_{K_0}(z_0)$ since $E_0$ is inscribed in both $K_0$ and $K_2$.  But then $V_{K_1}(z_0)>V_{K_0}(z_0)=W(z_0)$.  Hence there is a simplex $S_1\in\calS(K_1)$ such that $V_{S_1}(z_0)>V_{S}(z_0)$ for all $S\in\calS(K)$.  But it is easy to see that $\calS(K_1)\subset\calS(K)$, and this gives a contradiction.

So $E_1$ must be inscribed in $K_2$.  But then, $E_0$ and $E_1$ are ellipses inscribed in $K_2$ with the property that $z_0\in E_{0,\CC}\cap E_{1,\CC}$ and  $V_{K_2}(z_0)>0$.  Hence by Proposition \ref{prop:107}, $E_0=E_1$.

So $E_0$ is inscribed in $K_1$, as claimed.  But since $E_0$ was in $K_0$ to begin with, it means that $E_0$ is inscribed in $K$.  Hence $f_0(\partial\Delta)\subset K$.

Now, consider the function $\varphi(\zeta):= W(f_0(\zeta))-\log|\zeta|$ for $|\zeta|\geq 1$.  Then  $\varphi$ is continuous on $\CC\setminus\Delta$, identically zero on $\partial\Delta$ and $\varphi(\zeta_0)=0$.  Hence $\varphi\equiv 0$ on $\CC\setminus\Delta$ by the maximum principle.  Another application of the maximum principle shows that the function $\psi(\zeta):=V_K(f_0(\zeta))-\log|\zeta|$ satisfies $\psi\leq 0$ on $\CC\setminus\Delta$.  In particular, $\psi(f_0(\zeta_0))\leq 0 = \varphi(f_0(\zeta_0))$, so $V_K(z_0)\leq W(z_0)$.  
\end{proof}

\begin{remark}\rm 
The case $N=d+2$ had to be treated separately in Theorem \ref{thm:main}, because otherwise the set $K_2$ in the above proof would involve only $d$ affine maps, which gives an unbounded cone.  Also, since $V_{K_0}(f_0(\zeta))=V_{K_1}(f_0(\zeta))=\log|\zeta|$ (from the fact that $E_0$ is inscribed in both $K_0$ and $K_1$), the proof of the inductive step shows that $V_K=\max\{V_{K_0},V_{K_1}\}$ .  This is not true for a general intersection of convex bodies; see Figure 4.
\end{remark}

\begin{figure}
\begin{center}
\includegraphics[height=5cm]{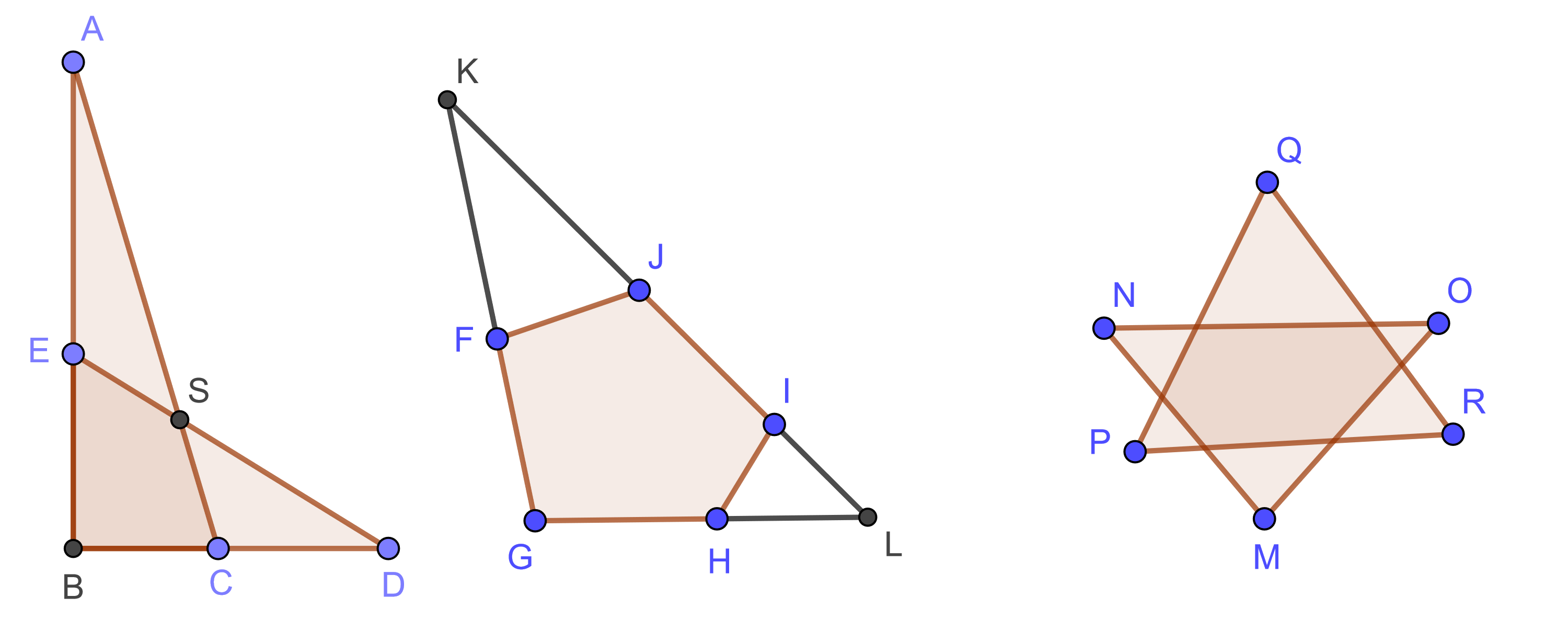}
\end{center}
\caption{Theorem \ref{thm:main} covers quadrilateral BCSE in the left picture (a base case).  In the middle picture, let $K$ be the pentagon FGHIJ,  and let $K_0$, $K_1$ be the quadrilaterals KGHI, FGLJ respectively, so that $K_2$ is the triangle KGL.  Then $V_K=\max\{V_{K_0},V_{K_1}\}$.  In  the right picture, the intersection of the triangles $T_1=\text{MNO}$ and $T_2=\text{PQR}$ is convex but their union is not.  We cannot conclude that  $V_{T_1\cap T_2}=\max\{V_{T_1},V_{T_2}\}$ .  (By inscribing ellipses in $T_1\cap T_2$, it is easy to see that this formula is false).}
\end{figure}

Using Theorems \ref{thm:mainN} and \ref{thm:Vsimplex}, we can compute the extremal function of a compact convex polytope explicitly.  We illustrate this on a simple example in $\RR^2$.    

Let $K$ be the convex hull of the points $\{ (0,0),(1,0),(3/4,3/4),(0,1)\}$, which is a quadrilateral with these points as vertices.  We determine the supporting lines to $K$ containing the edges of the quadrilateral to be 
$$
z_1=0,\ z_2=0,\ 3z_2+z_1=3,\ z_2+3z_1 =3.
$$
Pairs of lines intersect at the vertices as well as the points $(3,0),(0,3)$. 
If we let 
$$\ell_1(z)=z_1, \     \ell_2(z)=z_2,   \      \ell_3(z) = 3-z_1-3z_2,\  \ell_4(z) = 3-3z_1-z_2, $$
then 
$K=\{z\in\RR^2\colon \ell_j\geq 0 \hbox{ for all } j\}$.   This is the same set described in Remark \ref{rem:210}. Let $S_j$, $j=1,\ldots,4$ be as in that remark.  Note that only $S_3$ and $S_4$ are in $\calS(K)$ (i.e., are triangles).

For $z=(z_1,z_2)\in\CC^2$, barycentric coordinates on $S_3$ are found by solving 
$$
\begin{bmatrix}
z_1 & z_1-1 & z_1 \\
z_2 & z_2 & z_2-3\\
1&1&1
\end{bmatrix}   
\begin{bmatrix} 
\lambda_0\\ \lambda_1\\ \lambda_2
\end{bmatrix} \ = \ 
\begin{bmatrix} 
0 \\ 0 \\  1
\end{bmatrix}
$$
to obtain 
$$\lambda_0 = 1-z_1-\tfrac{1}{3}z_2, \    \lambda_1=z_1,  \  \lambda_2=\tfrac{1}{3}z_2.$$
Similarly, barycentric coordinates on $S_4$ are given by
$$
\lambda_0=1-\tfrac{1}{3}z_1-z_2, \   \lambda_1=\tfrac{1}{3}z_1, \  \lambda_2= z_2.  
$$
Hence
\begin{equation*}
\begin{aligned}
 V_K(z) = \max\{ V_{S_3}(z), V_{S_4}(z) \} 
 &   = \max\left\{ \log h(|1-z_1-\tfrac{1}{3}z_2|+|z_1|+|\tfrac{1}{3}z_2|),  \right. \\
& \qquad\qquad \  \left. \log h(|1-\tfrac{1}{3}z_1-z_2|+|\tfrac{1}{3}z_1|+|z_2| )  \right\}.
\end{aligned} \end{equation*}

Such computations can be automated in MATLAB or similar numerical linear algebra software.   In Theorem \ref{thm:maingen} we will extend Theorem \ref{thm:mainN} to a general polytope $K$, where $\calS(K)$ may include strips.  Assuming the general version of the theorem, we present an algorithm to compute $V_K$.

Suppose the linear maps that determine the $(d-1)$-dimensional faces of $K\subset\RR^d$ are given.\footnote{One can use the vertices of $K$ to compute these (see e.g., \cite{matt:analyze}).}  We then compute all of the simplices and strips in $\calS(K)$:
\begin{enumerate}
\item  Let $j:=d$ and $K_j:=K$.
\item  For each collection of $j+1$ supporting hyperplanes, form the $(j+1)\times j$ matrix $N$ of normal vectors to $K_j$.
\item If each $j\times j$ minor of $N$ is of rank $j$, then we possibly have a supporting simplex.  In that case, check condition (\ref{eqn:lem19}) with $x\in K_j$.  
\item Otherwise, there is a $j\times j$ minor not of rank $j$; map $\RR^j$ to $\RR^{j-1}$ by projecting along a direction orthogonal to the normal vectors of the hyperplanes in this minor.  Let $K_{j-1}:=\pi_j(K_j)$ where $\pi_j$ denotes this projection map.   
\item Replace $j$ by $j-1$.  Repeat from step 2 in $\RR^j$ with $\pi_j(K_j)$   and the hyperplanes in $\RR^j$ that are images under $\pi_{j+1}$ of the hyperplanes in the previous step.  
\item After a finite number of such iterations we will eventually obtain a simplex in $\calS(K_j)$,  $K_j\in\RR^j$ for some $j\geq 1$. The set 
$$   S:= \pi_d^{-1}\circ\cdots\circ  \pi_{j+1}^{-1}(S_j)$$ is then the desired  strip in $\calS(K)$.
\item Repeat until all simplices and strips in $\calS(K)$ have been constructed.
\item Use barycentric coordinates to compute the extremal function of each simplex or strip, and hence compute $V_K$.
\end{enumerate}

We illustrate the algorithm on a simple example that generates a strip.  Consider the trapezium $K=\{z\in\RR^2\colon l_j(z)\geq 0,\ j=1,\ldots,4\}$  with linear maps
$$
l_1(z)=z_1,\  l_2(z)=z_2,\ l_3(z)=1-z_2,\ l_4(z)=3-3z_1-z_2.
$$
The normal vectors are $(1,0),(0,1),(0,-1),(-3,-1)$.  
We look at collections of $3$ normal vectors ($j=2$):
\begin{itemize}
\item  $\{(1,0),(0,1),(0,-1)\}$, which gives $N=\begin{bmatrix} 1 & 0\\  0&1\\ 0&-1   \end{bmatrix}$.  The minor given by the last 2 rows is singular, and a null vector is $(1,0)$. Projection along this vector is $$z-(z\cdot(1,0))(1,0)=(0,z_2)$$ which may be identified with the map $\pi(z)=z_2$.    Linear maps for $K_1:=\pi(K)$ may be derived from the linear maps for $K$:
$$\tilde l_1(t):=l_1(0,t)=0 , \ \tilde l_2(t):=l_2(0,t)=t,\ \tilde l_3(t)=l_3(0,t)=1-t.$$
Clearly only $\tilde l_2,\tilde l_3$ are needed, and yield $K_1:=\pi(K)=[0,1]$.  We get the strip  $S=\pi^{-1}[0,1]=\RR\times[0,1]$.  The extremal function is 
$$V_S(z)=V_{[0,1]}(\pi(z))=V_{[0,1]}(z_2)= \log h(|z_2|+|1-z_2|).$$
{\it Remark.}  Observe that the strip condition (\ref{eqn:simpcondition}) is satisfied. Let $x=(\tfrac{1}{2},\tfrac{1}{2})$, $p_0=(0,0)$.  Then $x-p_0=(\tfrac{1}{2},\tfrac{1}{2})$ and $$(0,-1)\cdot(\tfrac{1}{2},\tfrac{1}{2})=-\tfrac{1}{2}<0,\quad (0,1)\cdot(\tfrac{1}{2},\tfrac{1}{2})=\tfrac{1}{2}>0.$$
All other minors of $N$ are nonsingular.  

\item $\{(1,0),(0,1),(-3,-1)$, which gives $N=\begin{bmatrix} 1 & 0\\  0&1\\ -3&-1   \end{bmatrix}$.  All minors of $N$ are nonsingular.  We check condition (\ref{eqn:lem19}) with  $x=(\tfrac{1}{2},\tfrac{1}{2})$ and $p_0=(0,0)$: 
$$
(-3,-1)\cdot(\tfrac{1}{2},\tfrac{1}{2})=-2<0,\  (1,0)\cdot(\tfrac{1}{2},\tfrac{1}{2})=\tfrac{1}{2}>0,\  (0,1)\cdot(\tfrac{1}{2},\tfrac{1}{2})=\tfrac{1}{2}>0.
$$
Hence we have a simplex.  This is the triangle $S_3$ from before, with  extremal function
$$
V_{S_3}(z)=\log h(|1-z_1-\tfrac{1}{3}z_2|+|z_1|+|\tfrac{1}{3}z_2|).
$$

\item $\{(1,0),(0,-1),(-3,-1)\}$.  All minors of the associated matrix $N$ are nonsingular.  We check condition (\ref{eqn:lem19}).  Take $x=(\tfrac{1}{2},\tfrac{1}{2})$ and $p_0=(0,1)$, so that $x-p_0=(\tfrac{1}{2},-\tfrac{1}{2})$.  Then
$$
(-3,-1)\cdot(\tfrac{1}{2},-\tfrac{1}{2})=-2>0,\ (1,0)\cdot(\tfrac{1}{2},-\tfrac{1}{2})=\tfrac{1}{2}>0,\  (0,-1)\cdot(\tfrac{1}{2},-\tfrac{1}{2})=\tfrac{1}{2}>0.
$$
The condition fails, so no simplex is obtained.

\item $\{(0,1),(0,-1),(-3,-1)\}$.  The minor of $N$ given by the first two normal vectors is singular, and all others are nonsingular.  We end up with the same strip as in the first case.
\end{itemize}
Altogether, $\calS(K)$ consists of one simplex and one strip.  The extremal function is 
$$V_K=\max\{ \log h(|z_2|+|1-z_2|), \log h(|1-z_1-\tfrac{1}{3}z_2|+|z_1|+|\tfrac{1}{3}z_2|)\}.$$

\section{The extremal function of a real convex body} \label{sec:blm}

We can now give a new proof of the main theorem in \cite{burnslevmau:pluripotential}.  

\begin{theorem}\label{thm:blm}
Let $K\subset\RR^d$ be a compact convex body.  For each $z\in\CC^d\setminus K$ there exists a complex ellipse  $E_{\CC}\subset\CC^d$  (i.e., an algebraic curve of degree 2, or of degree 1 if degenerate) with parametrization  
\begin{equation}\label{eqn:complexellipse}
\CC^*\ni\zeta \stackrel{f}{\longmapsto} \ba + \bc\zeta+\bar\bc/\zeta\in\CC^d  \quad (\ba\in\RR^d,\ \bc\in\CC^d)
\end{equation}
such that  
\begin{itemize}
\item[(i)]
$z=f(\zeta_z)$ for some $|\zeta_z|>1$,  \item[(ii)] $V_K(f(\zeta))=\log|\zeta|$ for all $|\zeta|>1$, and  
\item[(iii)] $E:=\{f(e^{i\theta})\colon \theta\in\RR\}$ is a real ellipse (or line segment, if degenerate) that is inscribed in $K$. 
 \end{itemize} \end{theorem}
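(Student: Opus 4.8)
The plan is to realize the required curve through $z\in\CC^d\setminus K$ as the \emph{extremal inscribed ellipse} through $z$, obtained from a variational problem; the inequality $V_K(f(\zeta))\le\log|\zeta|$ along the complexification of any ellipse whose real trace lies in $K$ will come from the maximum principle, and the reverse inequality from squeezing $K$ between this ellipse and a supporting simplex or strip, for which the formula is already known (Baran; via Lemma~\ref{lem:62} the strip case reduces to a lower-dimensional simplex). After approximating $K$ by convex polytopes $K_n\to K$ and using local uniform convergence $V_{K_n}\to V_K$ together with stability of the construction under this limit, we may assume $K$ is a polytope, so that Proposition~\ref{prop:ellipsesimplex} is available. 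Now let $\mathcal F$ be the family of maps $f(\zeta)=\ba_1+\bc\zeta+\bar\bc/\zeta$ as in \eqref{eqn:complexellipse} whose real trace $\calE_f:=f(\{|\zeta|=1\})=\{\ba_1+2\Re(\bc e^{i\theta})\}$ lies in $K$ and for which $z=f(\zeta_f)$ with $|\zeta_f|>1$. This family is nonempty: fixing $\ba_1\in\intt K$, the real-linear map $\bc\mapsto R\bc+\bar\bc/R$ on $\CC^d$ is invertible for real $R>1$, so $R\bc+\bar\bc/R=z-\ba_1$ can be solved, and its solution tends to $0$ as $R\to\infty$, forcing $\calE_f$ into a small neighbourhood of $\ba_1$, hence into $K$. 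Put $\rho:=\inf\{\log|\zeta_f|:f\in\mathcal F\}$. Since $\calE_f\subset K$, the vector $\bc$ and the centre $\ba_1\in\co(\calE_f)\subset K$ are bounded, and $|\zeta_f|$ is bounded along a minimizing sequence; passing to a limit we obtain $f_*\in\mathcal F$ attaining the infimum, with $z=f_*(\zeta_*)$ and $|\zeta_*|=e^{\rho}$. Here $e^{\rho}>1$ (else $z$ lies on a real ellipse inside $K$) and $\bc_*\ne 0$ (else $z=\ba_1\in K$), so $f_*$ is a nonconstant holomorphic map $\CC^*\to\CC^d$, possibly with a line for its image.

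For the upper bound, fix any $f\in\mathcal F$. Then $V_K\circ f$ is subharmonic on $\CC^*$ and vanishes on $\{|\zeta|=1\}$ because $\calE_f\subset K$ and $V_K\equiv 0$ on $K$. Since $V_K\in\calL$ and $|f(\zeta)|=O(|\zeta|)$, the function $V_K(f(\zeta))-\log|\zeta|$ is subharmonic and bounded above on $\{|\zeta|>1\}$, extends subharmonically across $\zeta=\infty$ (removable singularity for a subharmonic function bounded above near an isolated point), and is $\le 0$ on $\{|\zeta|=1\}$; the maximum principle gives $V_K(f(\zeta))\le\log|\zeta|$ for all $|\zeta|>1$. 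In particular this holds for $f_*$, and $V_K(z)\le\rho$.

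For the lower bound I claim $\calE_{f_*}$ is maximally inscribed in $K$ (Definition~\ref{def:inscribed}). If it could be translated into $\intt K$, then translating the trace in that direction while re-solving $\bc_s\zeta_s+\bar\bc_s/\zeta_s=z-\ba_{1,s}$ (again using invertibility of $\bc\mapsto\bc\zeta+\bar\bc/\zeta$ for $|\zeta|>1$) for some $\zeta_s$ with $|\zeta_s|<|\zeta_*|$ would keep $z$ on the complex curve while leaving the trace inside $K$, contradicting minimality of $\rho$. Granting the claim, Proposition~\ref{prop:ellipsesimplex} yields a supporting simplex or strip $S\supset K$ in which $\calE_{f_*}$ is inscribed. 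If $S$ is a simplex, Baran's description of $V_S$ \cite{baran:plurisubharmonic} — the complement of $S$ is foliated by the complexifications of its maximally inscribed ellipses, with $V_S=\log|\zeta|$ on each — gives $V_S(f_*(\zeta))=\log|\zeta|$ for $|\zeta|>1$. If $S=\Sigma\times\RR^{d-j}$ is a strip with $\Sigma$ a simplex, then $V_S=V_\Sigma\circ\pi$ by Lemma~\ref{lem:62}, $\pi\circ f_*$ is the complexification of the projection of $\calE_{f_*}$ (maximally inscribed in $\Sigma$, as in the proof of Proposition~\ref{prop:ellipsesimplex}), and again $V_S(f_*(\zeta))=\log|\zeta|$. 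Since $K\subset S$ we have $V_K\ge V_S$, hence $V_K(f_*(\zeta))\ge\log|\zeta|$; with the upper bound this gives $V_K(f_*(\zeta))=\log|\zeta|$ for all $|\zeta|>1$. Taking $\calE_{\CC}:=f_*(\CC^*)$, $f:=f_*$ and $\zeta_z:=\zeta_*$ establishes (i) and (ii).

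The hardest step is the variational one: showing a minimizer of $|\zeta_f|$ actually has a \emph{maximally} inscribed real trace, which requires making the parameter-decreasing perturbation quantitatively compatible with keeping the trace inside $K$; closely related is the compactness when a minimizing sequence degenerates, so that $f_*$ may parametrize a complex line — the degenerate (line-segment) case then has to be carried through Proposition~\ref{prop:ellipsesimplex} and through the classical segment/simplex formulas. The polytope reduction and the convergence $V_{K_n}\to V_K$ are routine but should be stated carefully.
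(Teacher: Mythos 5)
The paper itself does not prove Theorem \ref{thm:blm}; it cites it verbatim from \cite{burnslevmau:pluripotential}, where the result is established by a variational/duality argument for general convex bodies. Your blind proposal is therefore necessarily a different route, and an interesting one: it inverts the logic of the main theorem of the present paper by reducing (via polytope approximation) to a setting where Proposition \ref{prop:ellipsesimplex} is available, constructing the candidate ellipse by minimizing $|\zeta_f|$, and then getting the matching lower bound from Baran's explicit foliation for simplices together with Lemma \ref{lem:62} for strips. The upper bound by the maximum principle is correct and standard, and your compactness argument for the existence of a minimizer $f_*$ is sound once $K$ is a polytope.

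The gap is precisely at the step you yourself call the hardest one. Your claim is that if the real trace $\calE_{f_*}$ can be translated by $v$ into $\intt(K)$, one can ``re-solve'' for $(\ba_{1,s},\bc_s,\zeta_s)$ with $|\zeta_s|<|\zeta_*|$ while keeping the trace in $K$, but this does not follow. The constraint that $z$ lie on the complexified curve is the vector equation $\ba_1+\bc\zeta+\bar\bc/\zeta=z$. If you translate the centre, $\ba_1\mapsto\ba_{1,*}+v$, and re-solve for $\bc$ at a nearby $\zeta$, the resulting $\bc'$ is \emph{not} $\bc_*$ (it differs by the unique solution of $(\bc'-\bc_*)\zeta_*+\overline{(\bc'-\bc_*)}/\zeta_*=-v$), so the new trace is not the translate $\calE_{f_*}+v$ --- its shape and size have changed, and there is no reason it lies inside $K$. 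Conversely, if you insist the new trace be exactly the translate (so $\bc$ is frozen and only $\ba_1,\zeta$ vary), the linearized constraint $v+(\bc_*-\bar\bc_*/\zeta_*^2)\dot\zeta=0$ is $d$ complex equations in one complex unknown $\dot\zeta$, which is overdetermined for $d>1$. So the simple translation perturbation neither keeps $z$ on the curve nor keeps the trace a translate; to make the argument work you would have to exhibit an admissible first-order variation with $\dot\bc\neq 0$ that simultaneously decreases $|\zeta|$ and pushes the trace inward at every contact point, which is the genuinely nontrivial content of the Burns--Levenberg--Ma`u variational analysis. The polytope-approximation step and the degenerate (segment) case are also left as assertions, but these are comparatively routine; the perturbation step is the real missing idea.
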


\begin{proof}
When $K$ is a convex polytope as in the previous section we use the Robin exponential map and Theorem \ref{thm:Robinexpmap}.  Precisely, given $z\in\CC^d\setminus K$, let $w=\calR_K^{-1}(z)$; then writing $w=c\zeta_z$ for some $|\zeta_z|>1$ and $c\in\partial K_{\rho}$, we have $z=a(c)+c\zeta_z+\bar c/\zeta_z$, and the map $f(\zeta):=a(c) + c\zeta+\bar c/\zeta$ parametrizes an ellipse that satisfies (i)--(iii).

For a general convex body $K$, we first approximate $K$ from above by a decreasing sequence of polytopes $K_1\supset K_2\supset\cdots$,  $\bigcap K_j=K$, with each $K_j$ as in the previous paragraph.  

Let $z\in\CC^d\setminus K$. Without loss of generality,  $z\in\CC^d\setminus K_j$ for each $j$.  Let 
$$f_j(\zeta) = \ba_j + \bc_j\zeta + \bar\bc_j/\zeta \quad (\ba_j\in\RR^d,\ \bc_j\in\CC^d)$$ 
parametrize a complex ellipse $E_{j,\CC}$ such that  $E_j:=\{f_j(e^{i\theta})\colon \theta\in\RR\}$ is a real ellipse  inscribed in $K_j$, and $z=f_j(\zeta_j)$ for some $|\zeta_j|>1$.   By a normal families argument on the components of $f_j$, we have, passing to a subsequence, that $f_j\to f$ locally uniformly in each component.  In particular, $f$ is given by (\ref{eqn:complexellipse}) where $\ba_j\to\ba$ and $\bc_j\to\bc$.  Local uniform convergence also gives $z=f(\zeta_z)$ where $\zeta_j\to\zeta_z$.  Note that we have uniform convergence on any compact annular region that contains the unit circle.
By the pointwise convergence $V_{K_j}\nearrow V_K$ (Proposition \ref{prop:61}(3)) we have 
$$ V_K(f(\zeta_z)) = V_{K}(z) = \lim_{j\to\infty} V_{K_j}(z) = \lim_{j\to\infty}\log|\zeta_j| = \log|\zeta_z|.$$
Hence (i) and (ii) hold for the ellipse $E_{\CC}$ parametrized by $f$.

It remains to verify (iii), i.e.,  $E:=\{f(e^{i\theta})\colon \theta\in\RR\}$ is inscribed in $K$.   This will follow from the fact that $E_j$ is inscribed $K_j$ for each $j$.  First, a limiting argument gives $E\subset K$, using that every point $w\in E$ is a limit of a sequence $w_j\to w$ with $w_j\in E_j$.  Next, to see that $E$ is inscribed in $K$, suppose not.  Then one can translate $E$ to the interior of $K$ (in $\RR^d$), and this translation sends a small neighborhood of $E$ to the interior of $K$.  Such a neighborhood will contain $E_j$ for sufficiently large $j$, by the local uniform convergence $f_j\to f$. Hence for sufficiently large $j$, the   translation also sends $E_j$ to the interior of $K$.  Since $K\subset K_j$, this contradicts the fact that $E_j$ is inscribed in $K_j$, and (iii) holds.
\end{proof}

Theorem \ref{thm:blm} is the fundamental theorem from which all of the results in \cite{burnslevmau:exterior}, \cite{burnslevmau:extremal}, \cite{burnslevmaurevesz:monge}, regarding the regularity of the extremal function $V_K$ and the computation of  the complex equilibrium measure $(dd^cV_K)^d$, may be derived.

As a corollary, we recover the following result (see \cite{baran:plurisubharmonic}, Theorem 4.2).

\begin{corollary}\label{cor:baran}
Let $K\subset\RR^d$ be a convex body symmetric with respect to the origin.  Then $\CC^d\setminus K$ is foliated by complexifications of Hooke ellipses on which $V_K$ is harmonic.  
\end{corollary}

\begin{proof}
We first suppose that $K$ is strictly convex.

Let $z\in\CC^d\setminus K$.  By Theorem \ref{thm:blm} there is an ellipse $E$ inscribed in $K$ such that $z\in E_{\CC}$, parametrized as in (\ref{eqn:complexellipse}) by a function $f$ (with parameters $\ba,\bc$, say) and $V_{K}(f(\zeta))=\log|\zeta|$ when $|\zeta|>1$.  

Since $K$ is symmetric, the ellipse  $-E$ is also inscribed in $K$, and has the same eccentricity and orientation.  In fact, 
$$
-E = \{-f_1(e^{i\theta})\colon \theta\in\RR\} = \{ -{\bf a} + {\bf c}(-e^{i\theta}) + \bar{\bf c}/(-e^{i\theta})\colon \theta\in\RR\}
$$
which is a translation of the set $E$ by $-2{\bf a}$.   However, because $K$ is strictly convex, any sufficiently small translation of $E$ in the direction of $-2\ba$ must translate it to the interior of $K$.  But this is impossible if $E$ is inscribed.  So $\ba=0$, i.e., $E$ is a Hooke ellipse.   

In case $K$ is not strictly convex, we may approximate it from above by a sequence of strictly convex bodies $K_{j}\searrow K$. Given $z\in\CC^d\setminus K$ we have $z\not\in K_j$ for sufficiently large $j$, and by the first part of the proof there exists a Hooke ellipse $E_j$ with the desired properties.  A similar approximation argument as used in the proof of Theorem \ref{thm:blm} shows that the points of $E_j$ converge to a Hooke ellipse for $E$ with the desired properties.  

We have established that complexifications of Hooke ellipses inscribed in $K$ cover all points of $\CC^d\setminus K$.  If $E,\tilde E$ are Hooke ellipses for $K$ that contain the same point $z\in\CC^d\setminus K$, then by the identity principle (Proposition \ref{prop:19}),  $E_{\CC}=\tilde E_{\CC}$. So the complexifications are disjoint outside $K$.  Hence they give a foliation of $\CC^d\setminus K$.
\end{proof}

   We close the paper by generalizing a couple of earlier results.

\begin{theorem}\label{thm:symmetric}
Let $K_1,\ldots,K_m$ be a finite collection of convex bodies in $\RR^d$ that are symmetric with respect to the origin.  Let $K:=K_1\cap\cdots\cap K_m$.    Then
$$
V_{K} = \max\{V_{K_1},\ldots,V_{K_m}\}.
$$
\end{theorem}

\begin{proof}
Observe that $K=L\cap K_m$, where $L=K_1\cap\cdots\cap K_{m-1}$ is also a convex body that is symmetric with respect to the origin. Then the general formula follows from the case $m=2$ and a straightforward induction. 

Hence suppose $m=2$.   We want to show that $V_K\leq\max\{V_{K_1},V_{K_2}\}=:W$.  The proof is similar to that of Proposition \ref{thm:ellipsoid}.  

 Let $z\in\CC^d\setminus K$.  If $W(z)>V_{K_1}(z)$ or $W(z)>V_{K_2}(z)$ then $W$ is maximal in a neighborhood of $z$.
Otherwise, suppose  $W(z)=V_{K_1}(z)=V_{K_2}(z)$

By Corollary \ref{cor:baran} there are Hooke ellipses $E_1,E_2$ inscribed in $K_1,K_2$ such that $z\in E_{1,\CC}\cap E_{2,\CC}$.  Since $V_{K_1}(z)=V_{K_2}(z)$ we have $E_{1,\CC}=E_{2,\CC}$ by the identity principle. The same maximum principle argument as in the proof of Proposition \ref{thm:ellipsoid} then yields $V_{K_1}=V_{K_2}=W\geq V_K$ at all points of the  complexified ellipse. In particular, $V_K(z)\leq W(z)$. 

So either $W$ is maximal in a neighborhood of $z$ or $V_K(z)\leq W(z)$. By the domination principle, $V_K\leq W$.    
\end{proof}

Finally, we generalize Theorem \ref{thm:mainN}, and obtain the most general form of our main theorem.


\begin{theorem}\label{thm:maingen}
Let $K\subset\RR^d$ be a compact convex polytope.  Then  $$V_K(z)=\max\{ V_S(z)\colon S\in\calS(K)\}.$$
\end{theorem}

\begin{proof}
Let $z\in\CC\setminus K$, and let $E_{\CC}$ be a complex ellipse through $z$ as in Theorem \ref{thm:blm}, parametrized by $f$, with real points $E$ inscribed in  $K$.  By Proposition \ref{prop:ellipsesimplex} there is a simplex or strip $S\in\calS(K)$ such that $E$ is inscribed in $S$. 

 If $S$ is a simplex, then by Theorem \ref{thm:blm}(ii) and Proposition \ref{prop:98}, $$V_K(z)=\log|\zeta_z|=V_S(z)$$  (where $z=f(\zeta_z)$) and the theorem holds.

If $S$ is a strip, then $S=\pi^{-1}(S')$ where $S'$ is a simplex in $\RR^j$ for some $j<d$ and $\pi$ is a projection.  Also, $E_{\pi}:=\pi(E)$ is an ellipse inscribed in $S'$ whose complexification $E_{\pi,\CC}$ has  parametrization $\pi\circ f=:f_{\pi}$, with $f_{\pi}(\zeta_z)  =\pi(z)$.  By Theorem \ref{thm:blm}(ii), Proposition \ref{prop:98} (applied to $S'$), and Lemma \ref{lem:62},
$$
V_K(z)=\log|\zeta_z|=V_{S'}(\pi(z))=V_S(z),
$$
which proves the theorem in this case too.
\end{proof}

\noindent{\bf Acknowledgement.}  I would like to thank the referee for a number of helpful comments that simplified some of the arguments, and also for the formulation of Theorem \ref{thm:symmetric}.



\end{document}